\documentclass[11pt]{amsart}

\usepackage[T1]{fontenc}
\usepackage{amsmath,amsthm,amsfonts,amssymb,mathtools,mathrsfs}
\usepackage{microtype}
\usepackage{xcolor}
\usepackage{tikz}
\usetikzlibrary{arrows.meta,calc,positioning}

\newcommand{\be}{\begin{equation}}
\newcommand{\ee}{\end{equation}}

\newcommand{\R}{\mathbb{R}}

\newcommand{\SL}{\operatorname{SL}}
\newcommand{\ASL}{\operatorname{ASL}}

\newcommand{\GL}{\operatorname{GL}}
\newcommand{\SO}{\operatorname{SO}}

\newcommand{\vol}{\operatorname{vol}}

\newtheorem{theorem}{Theorem}

\newtheorem{lemma}{Lemma}

\newtheorem{proposition}{Proposition}
\newtheorem{corollary}{Corollary}

\theoremstyle{definition}

\theoremstyle{remark}
\newtheorem{remark}{Remark}
 
\numberwithin{equation}{section}

\def\presuper#1#2%
  {\mathop{}%
   \mathopen{\vphantom{#2}}^{#1}%
   \kern-\scriptspace%
   #2}

\usepackage{enumitem}
\newlist{steps}{enumerate}{1}
\setlist[steps, 1]{label = \textbf{Step \arabic*:}}

\definecolor{dblue}{rgb}{0.09,0.32,0.44} 
\usepackage[colorlinks=true, linkcolor=dblue, citecolor=dblue, urlcolor=blue]{hyperref}

\usepackage[hang,flushmargin]{footmisc}

\newcommand{\Comm}{\operatorname{Comm}}
\newcommand{\Prob}{\mathbb P}
\newcommand{\prperp}{\pi_{\perp}}

\newcommand{\weakto}{\mathrel{\Rightarrow}}
\newcommand{\face}{\ell}
\newcommand{\PGL}{\operatorname{PGL}}

\begin{document}

\author{Christopher Lutsko}
\address{Mathematics Department, University of Houston, 3551 Cullen Blvd,
77204 Houston, USA}
\email{clutsko@uh.edu}

\author{Jens Marklof}
\address{School of Mathematics, University of Bristol, Bristol BS8 1UG, UK}
\email{j.marklof@bristol.ac.uk}

\title[Loss of memory in the periodic Ehrenfest model]{Loss of memory in the periodic Ehrenfest model\\ with small polyhedral scatterers}
\hypersetup{
  pdftitle={Loss of memory in the periodic Ehrenfest model with small polyhedral scatterers},
  pdfauthor={Christopher Lutsko and Jens Marklof},
  pdfsubject={Boltzmann--Grad limit and kinetic transport for the periodic Ehrenfest model},
  pdfkeywords={periodic Ehrenfest model, wind-tree model, Boltzmann--Grad limit,
    polyhedral billiards, homogeneous dynamics, Markov renewal process,
    generalised linear Boltzmann equation}
}

\begin{abstract}
We determine the Boltzmann--Grad limit of the Ehrenfest ``wind-tree'' model with a periodic configuration of polyhedral scatterers. The central result is a loss-of-memory effect in dimension $d\geq 3$ for a generic choice of polyhedron. This yields a Markovian limiting transport process on an extended state space and a generalised linear Boltzmann equation for the time evolution of the particle density.
\end{abstract}

\subjclass[2020]{Primary 82C40; Secondary 37A17, 35Q20, 60K15.}

\keywords{periodic Ehrenfest (wind-tree) model, Boltzmann--Grad limit,
\mbox{polyhedral billiards}, homogeneous dynamics, random flight process,
Markov renewal process, generalised linear Boltzmann equation}

\maketitle

\section{Introduction}

The original wind-tree model, introduced by P.~and T.~Ehrenfest in their classic 1911 encyclopedia article on the conceptual foundations of statistical mechanics \cite{EE1911}, describes a cloud of non-interacting point particles in an array of identically oriented, congruent square scatterers placed at random locations in the two-dimensional plane $\R^2$. It was designed as a more accessible variant of the Lorentz gas \cite{Lorentz1905} (which has spherical scatterers), given the simpler reflection law for the velocities. Gallavotti \cite{Gallavotti1969} proved that in the Boltzmann--Grad limit, the evolution of the particle density is described---in both models---by the linear Boltzmann equation whose collision kernel is the appropriate differential scattering cross section of the single scatterer. Subsequently, Gallavotti's kinetic result was extended to an intermediate kinetic-diffusive time-scale in \cite{LT2020, LT2021} to prove invariance principles for both models.

The objective of the present paper is to study the analogous question for the Ehrenfest model in the periodic setting.  We work in general dimension $d\geq2$, with scatterers located at the points of a covolume-one Euclidean
lattice $\mathscr L$, and consider general polyhedral scatterers $\mathcal P$.
We assume throughout that $\mathcal P\subset\mathbb R^d$ is a bounded open convex
polyhedron with $0\in\mathcal P$ and $\vol(\mathcal P)=1$.    
We will prove that, for $d\geq3$ and a ``generic'' polyhedron, the
Boltzmann--Grad limit is a Markovian random flight on an extended state space.
Besides the usual phase coordinates $(Q,V)\in T^1(\mathbb R^d)$, the extended space
retains the time $\xi$ until the next collision and the labelled impact
parameter $W=(\ell,b)$. We refer to the billiard dynamics lifted to this extended state space as the {\em lifted Ehrenfest flow}; note that a trajectory of the lifted flow is fully determined by initial data in $(Q,V)$. The precise hypothesis on the polyhedron is stated in Section~\ref{sec:nonresonance} in terms of an {\em $\mathscr L$-nonresonance condition}. This nonresonance mechanism has no two-dimensional analogue (Remark~\ref{rem:d2}). 

The main result of this paper is the following.

\begin{theorem}
\label{thm:intro-boltzmann}
Let $d\geq3$, let $\mathscr L$ be a Euclidean lattice of covolume one, and let
$\mathcal P\subset\mathbb R^d$ be as above and $\mathscr L$-nonresonant.  
For random initial data $(Q,V)$ distributed according to an absolutely continuous probability measure, the lifted Ehrenfest flow converges under the Boltzmann--Grad scaling in finite-dimensional
distributions to a time-homogeneous Markov process.  The evolution of the lifted particle density $f$ is given by the generalised linear Boltzmann equation
\begin{equation}\label{GLB}
   \bigl(\partial_t+V\cdot\nabla_Q-\partial_\xi\bigr)f(t,Q,V,\xi,W)
      =[\mathcal C_{\mathcal P}f](t,Q,V,\xi,W)
\end{equation}
where $\mathcal C_{\mathcal P}$ is the collision operator \eqref{eq:CP}.
\end{theorem}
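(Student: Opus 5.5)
The plan follows the Marklof--Strömbergsson strategy for the periodic Lorentz gas, adapted to polyhedral scatterers and the extended state space. The Boltzmann--Grad scaling gives scatterers $\rho\mathcal P + \rho^{1-d}\mathscr L$ with macroscopic coordinates. The first step is to show that, for absolutely continuous random initial data $(Q,V)$, the pair consisting of the free path length $\xi$ and the labelled impact parameter $W=(\ell,b)$ converges jointly in distribution. Here $\ell$ is the face of $\mathcal P$ that is hit and $b$ is the position of the impact on that face, projected orthogonally to $V$. The limit is expressed through the space of affine lattices $\ASL(d,\R)/\ASL(d,\mathbb Z)$. The free path problem is rewritten as the statement that a random affine lattice $(\mathbb Z^d + \alpha)M_0 D_\rho R(V)$ avoids a cylinder of length $\xi$ whose cross-section is the shadow $\prperp\mathcal P$ of the polyhedron in the direction $V$. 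The hit face and impact point are then read off from which lattice point lies first in the cylinder and where in the shadow it falls. Equidistribution of expanding horospheres, via Ratner's theorem together with the Shah/Marklof--Strömbergsson theorems, gives convergence to the Haar-random affine lattice. This yields transition kernels of the form $\Phi(\xi,w^+\mid w^-)$.

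The second step handles subsequent collisions. After a collision on face $\ell$ at impact parameter $b$, the particle leaves with the reflected velocity $V' = V - 2(V\cdot n_\ell)n_\ell$. It starts from a point on the scatterer, so the initial condition is now a random point on a specific face rather than a generic point. The conditional law of the next free flight given the whole past is again governed by equidistribution, now of the lattice rotated to the new direction $V'$ and sampled along a lower-dimensional set (the face) of a horosphere. The key input is joint equidistribution over consecutive collisions of the lattices $\mathscr L R(V_j)$. If the collision directions $V_0,V_1,\dots$ stay in general position relative to $\mathscr L$, the corresponding orbits on $\SL(d,\R)/\SL(d,\mathbb Z)$ become asymptotically independent, with Haar limit in the product space. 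The $\mathscr L$-nonresonance hypothesis from Section~\ref{sec:nonresonance} should be exactly what guarantees this. Concretely, the reflections $\{I - 2n_\ell n_\ell^{\top}\}$ composed along any finite word should not lie in the commensurator of $\mathscr L$, apart from the trivial cases, so the product orbit closure is the full product. Via Ratner's classification of the limit measure on the product, the conditional law of $(\xi_{j+1},W_{j+1})$ then depends only on the labelled impact $W_j$ (and $V_j$), not on earlier history. This is the loss of memory, which fails in $d=2$ (Remark~\ref{rem:d2}) because there the reflection group of a polygon is commensurable with itself in a way that produces resonances.

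Given these limits of the joint distributions of $(\xi_j,V_j,W_j)_{j\le n}$ for every $n$, the third step is the standard reconstruction. The piecewise linear trajectory $Q(t)$, together with the residual time to the next collision and the current label, is a continuous functional of finitely many collision data on bounded time intervals. Tightness follows from the fact that the number of collisions in $[0,T]$ is tight. Convergence in finite-dimensional distributions to a Markov renewal process on the extended space then follows from the continuous mapping theorem. Time-homogeneity holds because $(\xi,W)$ has been included in the state. Differentiating the semigroup formally gives \eqref{GLB}. On the interior the state simply drifts: $Q$ moves with velocity $V$ and $\xi$ decreases at unit rate. When $\xi=0$, the particle jumps to $(Q, V', \xi', W')$ with kernel $\Phi_{\mathcal P}$, and this boundary flux is exactly the collision operator $\mathcal C_{\mathcal P}$ of \eqref{eq:CP}.

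The main obstacle is the second step. One must prove that, under nonresonance, the sequence of lattices seen after each reflection equidistributes jointly and independently. One must also control the singular set where trajectories hit edges or lower-dimensional faces, or where the first lattice hit is ambiguous. The first approach I would try is to lift to $G^n$ with $G=\ASL(d,\R)$ and apply Ratner's measure classification to the limit of the diagonally embedded expanding unipotent orbits. The intermediate subgroups would then be excluded by showing that any such subgroup forces a rational relation among the reflected frames, which contradicts nonresonance. The boundary sets have measure zero in the limit, so they can be handled by the usual approximation of indicator functions by continuous ones.
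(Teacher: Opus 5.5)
Your plan is essentially the paper's proof. The paper proceeds in the same stages:
\begin{itemize}
\item the first-point law for an affine lattice in the shadow tube $\mathfrak Z_v$;
\item joint equidistribution of the unfolded lattices $\mathscr L P_j^{-1}K(v_0)D(\rho)$, which are pairwise incommensurable precisely by $\mathscr L$-nonresonance of the reduced words $u_iu_j^{-1}$ (the paper quotes Marklof--Str\"ombergsson's product theorem rather than rerunning Ratner);
\item a recursive first-point record map whose law under the product Haar measure factorises into the kernels $\Phi_{0,v_j}(\xi_{j+1},b_{j+1},z_j)$;
\item the Marklof--Str\"ombergsson continuous-time and forward-equation machinery.
\end{itemize}
The one thing to clean up is your description of the later flights. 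They do not come from sampling a face of a horosphere, and they do not need affine factors. All randomness stays in $v_0$ (and $Q$), and for $j\geq1$ the $j$th factor is the fixed reflected lattice in $X_0=\SL(d,\mathbb Z)\backslash\SL(d,\R)$, pushed by the same $K(v_0)A^t$. The departure point enters only through the transverse shift $z_j$.
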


Perhaps surprisingly, the generalised Boltzmann equation for the periodic Ehrenfest model with generic polyhedra is of the same type as the one found for the periodic Lorentz gas \cite{CagliotiGolse2010,MarklofStrom2010,MarklofStrom2011,MarklofStrombergsson2024}. As we will show, its collision term combines the polyhedron's scattering law with the free-path kernel.

The paper is organised as follows.  Section~\ref{sec:geometry} introduces the
periodic Ehrenfest billiard, including the impact and departure
coordinates used throughout the paper.  Section~\ref{sec:phi} recasts a
single free flight as a first-point problem for an affine lattice.  The geometric and
measure-theoretic properties of this construction yield the joint Boltzmann--Grad limit of the
first free path and its labelled impact parameter, valid in every dimension
$d\geq2$.

The passage from one flight to a collision history requires simultaneous
equidistribution of the lattices obtained by unfolding the successive
reflections.  Section~\ref{sec:nonresonance} therefore formulates a
lattice-relative nonresonance condition on the face reflections, proves that
it is generic in both the measure-theoretic and Baire-category senses when
$d\geq3$, and explains the obstruction in dimension $d=2$.  In
Section~\ref{sec:collision-limit}, product equidistribution is combined with
a recursive first-point construction to obtain the impact-resolved law of
every fixed finite collision record.  The factorisation of this law is the
loss-of-memory mechanism behind the limiting dynamics.  

Section~\ref{sec:velocity} pushes the impact-resolved law forward to
velocity and segment variables.  In particular, it records the
differential scattering cross section of a polyhedron and explains why
discarding the impact coordinate does not in general give a closed Markov
description.  Section~\ref{sec:macro} then passes from a fixed number of
collisions to macroscopic continuous time, proving finite-dimensional
convergence and excluding explosion of the limiting flight process.
Finally, Section~\ref{sec:kinetic} constructs the extended
Markov process and proves the full convergence statement. The
Kolmogorov forward equation for the process yields the generalised Boltzmann equation \eqref{GLB}. This completes the proof of Theorem \ref{thm:intro-boltzmann}.

\subsection*{Acknowledgements}
JM's research was supported by EPSRC grant EP/W007010/1 and through access to OpenAI models provided by the ChatGPT for Academic Researchers program. 

\subsection*{Data access statement}
No new data were generated or analysed during this study.

\subsection*{Declaration of AI use}

The authors supplied the main research results and proof outlines for this paper.  OpenAI's Codex was used to assist with mathematical exposition, drafting and clarification of proof arguments, creation of figures, and LaTeX editing. In particular, it corrected and simplified the results in Section \ref{sec:nonresonance} on arithmetic nonresonance, and identified relevant references \cite{Dekker1959,Vorobets2005}. The authors reviewed the AI-assisted revisions and take full responsibility for the work presented in this manuscript.

\section{The dynamical setting}\label{sec:geometry}

Let $d\geq2$, let
$S_1^{d-1}=\{v\in\mathbb R^d:\lVert v\rVert=1\}$, and fix a covolume-one
Euclidean lattice $\mathscr L=\mathbb Z^dM_0$, with
$M_0\in\SL(d,\mathbb R)$.  All vectors are represented as row vectors.
Define the {\em regular boundary} of the polyhedron $\mathcal P$ as a labelled disjoint union
\begin{equation*} \partial_*\mathcal P=\bigsqcup_{\ell=1}^sF_\ell,
\end{equation*}
where $F_1,\ldots,F_s$ are the relative interiors of the codimension-one
faces (or facets) of $\overline{\mathcal P}$ and $n_\ell$ is the outward unit
normal to $F_\ell$.  Write
$dA_\ell$ for $(d-1)$-dimensional Euclidean measure on $F_\ell$.

For $\rho>0$ define the open obstacle set and the completed billiard domain by
\begin{equation*} \mathcal O_\rho=
 \bigcup_{m\in\mathscr L}(m+\rho\mathcal P),
 \qquad
 \mathcal K_\rho=\mathbb R^d\setminus\mathcal O_\rho.
\end{equation*}
We take $\rho$ small enough that these obstacles are disjoint, and write
$T^1(\mathcal K_\rho)$ for the usual completed billiard phase space.  At a
regular collision with $F_\ell$, specular reflection sends the incoming
velocity $v$ to
\begin{equation*} vR_\ell=v-2(v\cdot n_\ell)n_\ell,
\end{equation*}
where $R_\ell=I-2\,{}^tn_\ell n_\ell$. Billiard trajectories and the corresponding initial data $(q_{\rm in},v_0)$ are regular if they encounter only regular collisions. To define the billiard dynamics for all initial data, we agree that trajectories terminate when they hit a lower-dimensional face or graze a face. The initial data leading to such trajectories form a Liouville-null set and, as we shall see, do not affect the Boltzmann--Grad limit.

For regular initial data $(q_{\rm in},v_0)$, let $t_j$ be the
collision times, $\tau_j=t_j-t_{j-1}$ the free-flight times, $q_j$ the
collision points, and $v_j$ the post-collision velocities. We also put $t_0=0$.
For an initial free phase point $(q,v)$ we write
\begin{equation*} \tau_1(q,v;\rho)=
 \inf\{t>0:q+tv\in\partial\mathcal O_\rho\},
\end{equation*}
with $\inf\varnothing=\infty$.
If the $j$th collision is at the point $q_j\in m_j+\rho F_{\ell_j}$, set
\begin{equation*} x_j^{(\rho)}=\rho^{-1}(q_j-m_j)\in F_{\ell_j},
 \qquad v_j=v_{j-1}R_{\ell_j}.
\end{equation*}
If $\tau_j=\infty$, or if the dynamics for nonregular initial data terminates at time $t_j$, we formally declare all subsequent flight times, collision points, and velocities undefined.

Let $e_1=(1,0,\ldots,0)$.  Fix a measurable map
$K:S_1^{d-1}\to\SO(d)$, smooth away from one direction
$v_{\rm s}$,
such that
\begin{equation*} vK(v)=e_1.
\end{equation*}
For an explicit construction, see \cite{MarklofStrom2010}.  Let $\omega$
denote the uniform probability measure on $S_1^{d-1}$, put $E=e_1^\perp$, and write $\prperp$ and
$\vol_E$ for orthogonal projection and Lebesgue measure on $E$.
For $v\in S_1^{d-1}$ define the projected shadow and the impact shadow by
\begin{equation*} C_v=\prperp\bigl(\mathcal P K(v)\bigr),
 \qquad B_v=-C_v\subset E.
\end{equation*}
The sign in $B_v=-C_v$ is a coordinate convention.  Put
\begin{equation*} A_{\mathcal P}=\sup_{v\in S_1^{d-1}}\vol_E(B_v)<\infty.
\end{equation*}
The family $\{B_v\}$ has compact closure in the Hausdorff topology, with
inradii uniformly bounded below and outradii uniformly bounded above.  For
$b\in B_v$ define
\begin{equation*}
 t_v^-(b)=\inf\{t\in\mathbb R:te_1-b\in\mathcal P K(v)\},
 \qquad
 x_v^-(b)=\bigl(t_v^-(b)e_1-b\bigr)K(v)^{-1}.
\end{equation*}
If $x_v^-(b)$ lies in the regular incoming boundary
\begin{equation*}
 \partial_-\mathcal P(v)=\bigsqcup_{v\cdot n_\ell<0}F_\ell
\end{equation*}
it determines a unique face label $\face_v^-(b)\in\{1,\ldots,s\}$; on the remaining null set of $b$, extend $\face_v^-(b)$ measurably by choosing the least index among the adjacent incoming faces. In this way, the face cells
\begin{equation*} B_{v,\ell}=\{b\in B_v:\face_v^-(b)=\ell\}
\end{equation*}
partition $B_v$.  Define the impact space at velocity $v$ by

\begin{equation*} \mathcal B_v=\bigsqcup_{\ell=1}^s(\{\ell\}\times B_{v,\ell}).
\end{equation*}
Then $W=(\ell,b)\in\mathcal B_v$ is the \emph{labelled impact parameter}. For regular $b$, it represents the point $x_v^-(b)\in F_\ell$; on the null cell boundaries, $\ell$ is the measurable tie-breaking label as specified above.
The global impact bundle
\begin{equation*} \mathcal B=\{(v,W):v\in S_1^{d-1},\ W\in\mathcal B_v\}
\end{equation*}
is given the Borel structure and cellwise topology inherited from
$S_1^{d-1}\times\{1,\ldots,s\}\times E$.

If a particle with velocity $v$ has next impact parameter $b$, define
\begin{align*} \mathcal R(v,b)&=vR_{\face_v^-(b)},\\
 \mathcal Z(v,b)&=\prperp\bigl(x_v^-(b)K(\mathcal R(v,b))\bigr).
\end{align*}
If $b\in B_{v,\ell}$ is a regular impact and $u=vR_\ell$, then
$\mathcal Z(v,b)\in\prperp(F_\ell K(u))$.  Modulo the null cell boundaries, the facewise map
\begin{equation*} B_{v,\ell}\longrightarrow \prperp(F_\ell K(u)),\qquad b\longmapsto\mathcal Z(v,b),
\end{equation*}
is measure preserving: the source and target are projections of the same face and,
since $u\cdot n_\ell=-v\cdot n_\ell$, their flux measures are
$(-v\cdot n_\ell)dA_\ell$.
For a regular collision record we therefore put
\begin{equation} \xi_j^{(\rho)}=\rho^{d-1}\tau_j,
 \quad b_j^{(\rho)}=-\prperp\bigl(x_j^{(\rho)}K(v_{j-1})\bigr),
 \quad W_j^{(\rho)}=(\ell_j,b_j^{(\rho)}),
 \quad z_j^{(\rho)}=\mathcal Z(v_{j-1},b_j^{(\rho)}).
 \label{reccollrec}
\end{equation}
Figure~\ref{fig:impact-geometry} summarises these coordinates.

\begin{figure}[t]
\centering
\begin{tikzpicture}[
    scale=1.06,
    >=Latex,
    poly/.style={draw=black,fill=blue!7,line width=.9pt,line join=round},
    active face/.style={draw=blue!65!black,line width=1.45pt,line cap=round},
    ray/.style={-Latex,line width=1.15pt},
    normal/.style={-Latex,line width=.9pt},
    axis/.style={-Latex,draw=black!65,line width=.8pt},
    parameter/.style={-Latex,draw=blue!65!black,line width=1.0pt},
    projection/.style={densely dotted,draw=black!70,line width=.85pt,
      line cap=round}]
   \def\polysection{(-1.35,-.65)--(-1.05,1.15)--(1.05,1.15)--
    (1.40,-.25)--(.55,-1.25)--(-.65,-1.15)--cycle}
  \def\activeface{(-1.05,1.15)--(1.05,1.15)}

  \begin{scope}[yshift=4.55cm]
    \coordinate (xp) at (0,1.15);
    \path[poly] \polysection;
    \draw[active face] \activeface;
    \draw[ray] (-.866,1.65)--(xp)
      node[pos=.32,above left=-1pt] {$v$};
    \draw[ray] (xp)--(.866,1.65)
      node[pos=.72,above right=-1pt] {$u$};
    \draw[normal] (xp)--(0,2.08) node[pos=.78,right=2pt] {$n_\ell$};
    \fill (xp) circle[radius=2.2pt];
    \node[anchor=north west,inner sep=1pt] at (.14,1.04) {$x$};
    \node[blue!65!black] at (-.58,.98) {$F_\ell$};
    \node at (0,-.35) {$\mathcal P$};
    \node[font=\small] at (0,-1.58) {physical section};
  \end{scope}

  \begin{scope}[xshift=-3.0cm]
    \coordinate (Ov) at (0,0);
    \draw[axis] (-1.85,0)--(1.80,0) node[right] {$e_1$};
    \draw[axis] (0,-1.48)--(0,2.12) node[above] {$E$};
    \begin{scope}[rotate=30]
      \path[poly] \polysection;
      \draw[active face] \activeface;
      \coordinate (xv) at (0,1.15);
      \draw[ray] (-.866,1.65)--(xv);
      \draw[ray] (xv)--(.866,1.65);
      \node at (0,-.35) {$\mathcal P K(v)$};
    \end{scope}
    \node[font=\small,anchor=east] at (-1.78,1.00) {$vK(v)=e_1$};
    \node[font=\small,anchor=west] at (.26,1.78) {$uK(v)$};
    \coordinate (bv) at (Ov |- xv);
    \draw[projection] (xv)--(bv);
    \draw[parameter] (Ov)--(bv) node[pos=.52,right=2pt] {$-b$};
    \fill (xv) circle[radius=2.2pt];
    \node[font=\small,anchor=south east,inner sep=1pt]
      (xvlabel) at (-.92,1.48) {$xK(v)$};
    \draw[draw=black!60,line width=.45pt]
      (xv)--(xvlabel.south east);
    \node[font=\small] at (0,-1.72) {incoming chart};
  \end{scope}

  \begin{scope}[xshift=3.0cm]
    \coordinate (Ou) at (0,0);
    \draw[axis] (-1.80,0)--(1.85,0) node[right] {$e_1$};
    \draw[axis] (0,-1.48)--(0,2.12) node[above] {$E$};
    \begin{scope}[rotate=-30]
      \path[poly] \polysection;
      \draw[active face] \activeface;
      \coordinate (xu) at (0,1.15);
      \draw[ray] (-.866,1.65)--(xu);
      \draw[ray] (xu)--(.866,1.65);
      \node at (0,-.35) {$\mathcal P K(u)$};
    \end{scope}
    \node[font=\small,anchor=east] at (-.26,1.98) {$vK(u)$};
    \node[font=\small,anchor=west] at (1.76,1.00) {$uK(u)=e_1$};
    \coordinate (zu) at (Ou |- xu);
    \draw[projection] (xu)--(zu);
    \draw[parameter] (Ou)--(zu) node[pos=.52,left=2pt] {$z$};
    \fill (xu) circle[radius=2.2pt];
    \node[font=\small,anchor=south west,inner sep=1pt]
      (xulabel) at (.92,1.48) {$xK(u)$};
    \draw[draw=black!60,line width=.45pt]
      (xu)--(xulabel.south west);
    \node[font=\small] at (0,-1.72) {outgoing chart};
  \end{scope}
\end{tikzpicture}
\caption{A planar section of one regular collision in physical coordinates
and in the two straightened charts.  Every panel shows the same polygonal
section, collision point, and velocity pair, rotated together.  In the
incoming chart $vK(v)=e_1$, and the dotted segment projects $xK(v)$
orthogonally onto $E$, giving $-b$.  In the outgoing chart $uK(u)=e_1$, and
the corresponding projection gives $z$.}
\label{fig:impact-geometry}
\end{figure}

Similarly, for $z\in C_v$ put
\begin{equation*}
 t_v^+(z)=\sup\{t\in\mathbb R:te_1+z\in\mathcal P K(v)\},
 \qquad x_v^+(z)=(t_v^+(z)e_1+z)K(v)^{-1}.
\end{equation*}
For almost every $z$, this point lies in the regular outgoing boundary
\begin{equation*} \partial_+\mathcal P(v)=\bigsqcup_{v\cdot n_\ell>0}F_\ell.
\end{equation*}
Furthermore, define
\begin{equation*} C_{v,\ell}=\{z\in C_v:x_v^+(z)\in F_\ell\}.
\end{equation*}
Thus, modulo the null cell boundaries, the target of the preceding facewise map is $C_{u,\ell}$, and
$b$ and $z$ are the incoming impact and outgoing departure parameters.
Our sign convention gives the terminal lattice point $y=\xi e_1+b+z$, cf.~Figure~\ref{fig:collision-tube}.

The projected measures are the billiard flux measures.  Namely,
\begin{equation}\label{eq:flux-in}
 db=(-v\cdot n_\ell)\,dA_\ell(x),
 \qquad x\in F_\ell,\quad v\cdot n_\ell<0,
\end{equation}
and on a regular outgoing face,
\begin{equation}\label{eq:flux-out}
 dz=(v\cdot n_\ell)\,dA_\ell(x),
 \qquad x\in F_\ell,\quad v\cdot n_\ell>0.
\end{equation}

\section{Limit law for the first collision}\label{sec:phi}

We now adapt the first-collision construction of
\cite[Sections~4 and~7]{MarklofStrom2010}.  Put
\begin{equation*}
 \ASL(d,\mathbb R)=\SL(d,\mathbb R)\ltimes\mathbb R^d,
 \qquad
 \ASL(d,\mathbb Z)=\SL(d,\mathbb Z)\ltimes\mathbb Z^d,
\end{equation*}
with multiplication law $(M_1,\eta_1)(M_2,\eta_2)=(M_1M_2,\eta_1 M_2+\eta_2)$,
and define
\begin{equation*} X=\ASL(d,\mathbb Z)\backslash\ASL(d,\mathbb R),
 \qquad X_q=\Gamma(q)\backslash\SL(d,\mathbb R),
\end{equation*}
where
$\Gamma(q)=\{\gamma\in\SL(d,\mathbb Z):\gamma\equiv I\pmod q\}$.
If $\alpha\in\mathbb Q^d$, let $q_\alpha$ be its least positive denominator
and put $X_\alpha=X_{q_\alpha}$; for $\alpha\in\mathbb R^d\setminus\mathbb Q^d$, put $X_\alpha=X$. Write
$\mu_\alpha$ for Haar probability measure on $X_\alpha$.
The affine lattice, as a point set, represented by
$g\in X_\alpha$ is
\begin{equation*} \Lambda_\alpha(g)=
 \begin{cases}
  (\mathbb Z^d+\alpha)M,
     &g=\Gamma(q_\alpha)M\in X_{q_\alpha},\quad
       \alpha\in q_\alpha^{-1}\mathbb Z^d,\\
  \mathbb Z^dM+\eta,
     &g=\ASL(d,\mathbb Z)(M,\eta)\in X,\quad \alpha\notin\mathbb Q^d.
 \end{cases}
\end{equation*}
Write $X_\alpha(y)=\{g:y\in\Lambda_\alpha(g)\}$ and
$\nu_y^{(\alpha)}$ for the disintegrated probability measure on this fibre;
see \cite[Section~7]{MarklofStrom2010}.

\begin{figure}[t]
\centering
\begin{tikzpicture}[
    x=2.35cm,y=3.9cm,>=Latex,
    tube edge/.style={draw=blue!60!black,line width=1.15pt,line cap=round},
    guide/.style={densely dotted,draw=black!65,line width=.8pt,
      line cap=round},
    vector/.style={-Latex,line width=1.05pt},
    dimension/.style={<->,line width=.9pt},
    lattice grid/.style={draw=black!18,line width=.35pt}]
  
    \begin{scope}
    \clip (-.55,-.78) rectangle (4.55,1.16);
    \foreach \m in {-5,...,5}{
      \foreach \n in {-5,...,5}{
        \pgfmathsetmacro{\px}{2.5*\m+1.5*\n}
        \pgfmathsetmacro{\py}{.5*\m-.1*\n}
        \draw[lattice grid] (\px,\py)--({\px+2.5},{\py+.5});
        \draw[lattice grid] (\px,\py)--({\px+1.5},{\py-.1});
      }
    }
  \end{scope}

  \fill[blue!8] (0,-.05) rectangle (4,.45);
  \draw[tube edge] (0,-.05)--(4,-.05);
  \draw[tube edge] (0,.45)--(4,.45);
  \draw[guide] (0,.20)--(4,.20);
  \draw[guide] (0,-.05)--(0,.45);
  \draw[guide] (4,-.05)--(4,.45);

    \begin{scope}
    \clip (-.55,-.78) rectangle (4.55,1.16);
    \foreach \m in {-5,...,5}{
      \foreach \n in {-5,...,5}{
        \pgfmathsetmacro{\px}{2.5*\m+1.5*\n}
        \pgfmathsetmacro{\py}{.5*\m-.1*\n}
        \fill[black] (\px,\py) circle[radius=2.15pt];
      }
    }
  \end{scope}

  \node[above=3pt,blue!60!black,fill=white,inner sep=1.5pt] at (2,.54)
    {$\mathfrak Z_v(0,\xi)+z$};
  \draw[thin] (0,.45)--(-.10,.53);
  \node[anchor=east,fill=white,inner sep=1.5pt] at (-.12,.54)
    {$z+B_v$};

    \draw[vector] (0,0)--(0,.20)
    node[midway,left=4pt] {$z$};

  \draw[guide] (4,.20)--(4.22,.20);
  \draw[guide] (4,.40)--(4.22,.40);
  \draw[vector] (4.22,.20)--(4.22,.40)
    node[midway,right=3pt] {$b$};

    \draw[blue!65!black,line width=.9pt,fill=white]
    (0,0) circle[radius=2.9pt];
  \fill (0,0) circle[radius=1.25pt];
  \node[font=\small,anchor=north east,inner sep=1.5pt] at (-.02,-.02) {$0$};
  \draw[blue!65!black,line width=.9pt,fill=white]
    (4,.40) circle[radius=2.9pt];
  \fill (4,.40) circle[radius=1.25pt];
  \draw[thin] (4,.40)--(4.16,.66)
    node[above right=-1pt] {$y=\xi e_1+b+z$};
  \node[below=4pt] at (4,-.05) {$x_1=\xi$};

  \draw[vector] (-.05,-.60)--(.78,-.60)
    node[midway,above=2pt,fill=white,inner sep=1pt] {$e_1$};
  \draw[dimension] (0,-.39)--(4,-.39)
    node[midway,below=2pt,fill=white,inner sep=1pt] {$\xi$};
\end{tikzpicture}
\caption{A planar schematic of the collision tube.  The black dots represent the rotated and stretched lattice.  The source centre $0$ lies on the left end
section and is joined by the vector $z$ to the dotted longitudinal reference
line.  The terminal centre $y$ lies on the right end section $x_1=\xi$, with
terminal impact parameter $b$.}
\label{fig:collision-tube}
\end{figure}

Let $B\subset E$ be a bounded open convex set with nonempty interior and
Lebesgue-null boundary.  For $c_1<c_2$ write
\begin{equation*} \mathfrak Z_B(c_1,c_2)=
 \{x_1e_1+u:c_1<x_1<c_2,\ u\in B\}.
\end{equation*}
For $\xi>0$, $b\in B$, and $z\in E$, put $y=\xi e_1+b+z$ and define
\begin{equation}\label{eq:Phi-definition} \Phi_{\alpha,B}(\xi,b,z)=
 \nu_y^{(\alpha)}\!\left(
 \left\{g\in X_\alpha(y):
 \Lambda_\alpha(g)\cap(\mathfrak Z_B(0,\xi)+z)=\varnothing
 \right\}\right).
\end{equation}
If we set $B=B_v$, we obtain the collision tube $\mathfrak Z_v(c_1,c_2):=\mathfrak Z_{B_v}(c_1,c_2)$ and kernel $\Phi_{\alpha,v}(\xi,b,z):=\Phi_{\alpha,B_v}(\xi,b,z)$. Here $z$ is the departure parameter, $b$ the terminal impact parameter, and $y$ the transformed centre of the terminal scatterer. Figure~\ref{fig:collision-tube} depicts the collision-to-collision case $\alpha=0$.  The source centre $0$ and terminal centre $y$ lie on the two end
sections of the tube.

\begin{lemma}\label{lem:general-shadow}
{\rm (i)} Given any $z\in E$, for $\mu_\alpha$-almost every $g$, the set
$\Lambda_\alpha(g)\cap(\mathfrak Z_B(0,\infty)+z)$ has a unique point with
least positive first coordinate; write it as
$y_B(g)=\xi_B(g)e_1+b_B(g)+z$.  

{\rm (ii)} For $z\in E$ and every Borel set
$A\subset\mathbb R_{>0}\times B$,
\begin{equation}\label{eq:first-point-law}
 \mu_\alpha\{g:(\xi_B(g),b_B(g))\in A\}
 =\int_A\Phi_{\alpha,B}(\xi,b,z)\,d\xi\,db.
\end{equation}

{\rm (iii)} For $z\in E$,
\begin{equation}\label{eq:Phi-normalisation}
 \int_0^\infty\int_B
 \Phi_{\alpha,B}(\xi,b,z)\,db\,d\xi=1.
\end{equation}

{\rm (iv)} The map $(\xi,b,z)\mapsto\Phi_{\alpha,B}(\xi,b,z)$ is Borel measurable.
\end{lemma}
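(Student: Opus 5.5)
We follow the argument of \cite[Sections~4 and~7]{MarklofStrom2010}, working in the generic random affine lattice $\Lambda_\alpha(g)$, $g\sim\mu_\alpha$. The main tool is the Siegel-type mean value formula. For every Borel $f\ge0$ on $\R^d$,
\begin{equation*}
 \int_{X_\alpha}\sum_{y\in\Lambda_\alpha(g)}f(y)\,d\mu_\alpha(g)=\int_{\R^d}f(y)\,dy ,
\end{equation*}
together with its refinement by disintegration,
\begin{equation*}
 \int_{X_\alpha}\sum_{y\in\Lambda_\alpha(g)}F(g,y)\,d\mu_\alpha(g)=\int_{\R^d}\int_{X_\alpha(y)}F(g,y)\,d\nu^{(\alpha)}_y(g)\,dy ,
\end{equation*}
valid for Borel $F\ge0$. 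We take this as given from \cite[Section~7]{MarklofStrom2010}. In the case $\alpha\in\mathbb Q^d$ with $\alpha\in\mathbb Z^d$, the point $0$ is always in the lattice; there the statement should be read with $y\ne0$ (or with $X_\alpha(y)$ the fibre for $y\ne 0$), and the sum runs over $y\neq0$. This does not matter since $0\notin\mathfrak Z_B(0,\infty)+z$, because its first coordinate is $0$.

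\textbf{Step 1: part (i).} The cylinder $\mathfrak Z_B(0,\infty)+z$ has infinite volume. Since $\mu_\alpha$-a.e.\ lattice meets every such cylinder in a generic direction, we show that the intersection is a.s.\ nonempty. This follows from Moore ergodicity, or more directly from (iii) once the total mass is computed, so we postpone it. Local finiteness then gives a minimal first coordinate among points of the cylinder. For uniqueness, we bound the probability that two distinct lattice points $y\ne y'$ in the cylinder share the same first coordinate. Using the two-point form of the mean value formula (Rogers-type second moment, or simply the first-moment formula applied fibrewise), the set of pairs with $y_1=y'_1$ has measure $0$. Concretely, condition on $y$ via $\nu_y$: then $y'-y$ ranges over a translate of a lattice whose distribution on $X_\alpha(y)$ is again absolutely continuous in the relevant sense. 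The hyperplane $\{x_1=y_1\}$ therefore contains no further point of the cylinder almost surely, and integrating over $y$ gives (i).

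\textbf{Step 2: part (ii).} Apply the disintegrated formula with
\begin{equation*}
 F(g,y)=\mathbf 1_A(y_1,\prperp y-z)\,\mathbf 1\{\Lambda_\alpha(g)\cap(\mathfrak Z_B(0,y_1)+z)=\varnothing\},
\end{equation*}
restricted to $y\in\mathfrak Z_B(0,\infty)+z$. By (i), $\sum_{y}F(g,y)$ equals $\mathbf 1_A(\xi_B(g),b_B(g))$ almost surely, since exactly one lattice point is the first one. On the right-hand side, the change of variables $y=\xi e_1+b+z$ has unit Jacobian, and the inner integral is precisely $\Phi_{\alpha,B}(\xi,b,z)$ by \eqref{eq:Phi-definition}. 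This yields \eqref{eq:first-point-law}.

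\textbf{Step 3: parts (iii) and (iv).} Taking $A=\R_{>0}\times B$ in (ii) gives $\int\Phi=\mu_\alpha\{\text{cylinder meets }\Lambda_\alpha(g)\}$. This probability equals $1$ because the set of lattices avoiding a cylinder of infinite volume is $\mu_\alpha$-null. To see this, note that avoidance of $\mathfrak Z_B(0,T)+z$ has probability tending to $0$ as $T\to\infty$, since such sets contain a translate of a large ball after applying the diagonal flow $\operatorname{diag}(T^{-(d-1)/d},T^{1/d},\dots)$, which preserves $\mu_\alpha$. The probability of avoiding a large ball is small because $\int\#(\Lambda\cap\text{ball})$ is its volume and the variance is controlled (or by compactness of the set of lattices avoiding a large ball). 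For measurability (iv), we write $\Phi$ as the fibre integral of a jointly Borel indicator in $(g,\xi,b,z)$ and use the Borel structure of the disintegration $y\mapsto\nu_y^{(\alpha)}$. An alternative is to realise $\nu_y$ as the pushforward of a fixed measure under a Borel family of maps $g\mapsto g\,(I,y)$-translates.

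\textbf{Main obstacle.} The delicate points are the almost-sure uniqueness in (i) and the precise handling of the fibre measures $\nu_y^{(\alpha)}$ in the rational case, where $0$ and the sublattice structure must be excluded carefully. We expect these to follow from the corresponding statements in \cite{MarklofStrom2010}.
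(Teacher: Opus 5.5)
Your proposal is correct and takes essentially the same route as the paper. The foliation (disintegration) formula identifies the first-point density with $\Phi_{\alpha,B}$, and ties are excluded by a fibrewise first-moment (two-point) bound that you, like the paper, take from \cite{MarklofStrom2010}. Almost-sure finiteness of the first point comes from a cylinder-avoidance estimate, and (iv) comes from the Borel incidence and avoidance conditions together with the Borel kernel $y\mapsto\nu_y^{(\alpha)}$.

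The only real variation is that you prove the avoidance estimate by rescaling the tube onto a large ball, rather than citing \cite[Section~8.4]{MarklofStrom2010}. Your Chebyshev/variance justification there fails for $\alpha\in\mathbb Q^d$ with $d=2$, since collinear points in the cusp make the second moment of the ball count infinite. In that case use the covering-radius form of your parenthetical alternative: a lattice avoiding some ball of radius $R$ has covering radius at least $R$, hence $\lambda_1\ll R^{-1/(d-1)}$, an event of $\mu_\alpha$-measure $o(1)$ uniformly in the centre. This is what your compactness remark should say; the avoiding lattices leave every compact set rather than forming one.
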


\begin{proof}
For a bounded Borel set $A\subset\mathbb R_{>0}\times B$, bounded away from
$\xi=0$, put
\begin{align*} N^{\rm first}_{A}(g)
 ={}&\sum_{\substack{y=\xi e_1+b+z\in\Lambda_\alpha(g)\\(\xi,b)\in A}}
 \mathbf 1\{\Lambda_\alpha(g)\cap
 (\mathfrak Z_B(0,\xi)+z)=\varnothing\}.
\end{align*}
Outside the set of terminal ties and tube boundary points this is the indicator of the
event that the first point has coordinates in $A$.
Firstly, there are no tube boundary points almost surely, by Siegel's volume formula.  To
exclude ties, thicken the terminal hyperplane and apply
\cite[Lemmas~7.12--7.13]{MarklofStrom2010}; the resulting two-point bound tends
to zero with the thickness.
The foliation formulas
\cite[Propositions~7.3 and~7.10]{MarklofStrom2010} give
\begin{align}
 \int N^{\rm first}_{A}(g)\,d\mu_\alpha(g)
 ={}&\int_A
 \nu_{\xi e_1+b+z}^{(\alpha)}
 \{g:\Lambda_\alpha(g)\cap(\mathfrak Z_B(0,\xi)+z)=\varnothing\}
 \,d\xi\,db. \label{eq:positive-bin-disintegration}
\end{align}
This yields \eqref{eq:first-point-law} for bounded test sets.  The translated-cylinder
bound of \cite[Section~8.4]{MarklofStrom2010}, applied to a ball contained in
$B$, shows that the first point is finite almost surely.  Hence we obtain
\eqref{eq:first-point-law} for general Borel sets and 
\eqref{eq:Phi-normalisation}.  Finally, the incidence and avoidance conditions
are Borel, as is the kernel $y\mapsto\nu_y^{(\alpha)}$; hence
$\Phi_{\alpha,B}$ is Borel measurable.
\end{proof}

Define the \emph{Hausdorff distance} between nonempty compact sets $A,A'\subset E$ by
\begin{equation*} d_{\rm H}(A,A')=
 \max\left\{\sup_{x\in A}\operatorname{dist}(x,A'),
            \sup_{x'\in A'}\operatorname{dist}(x',A)\right\}.
\end{equation*}
For bounded open convex sets we apply this distance to their closures. The inradius of $B$ is the
supremal radius of a ball contained in $B$, and its outradius the infimal
radius of an origin-centred ball containing it.

\begin{lemma}
\label{lem:shadow-stability}

{\rm (i)} For nonempty bounded open convex sets $B,B'\subset E$, $z,z'\in E$, and
$T<\infty$,
\begin{align}
 &\mu_\alpha\!\left\{g:\Lambda_\alpha(g)\cap
 \left[(\mathfrak Z_B(0,T)+z)\mathbin\triangle
 (\mathfrak Z_{B'}(0,T)+z')\right]\ne\varnothing\right\}\notag\\
 &\hspace{35mm}\leq
 \vol\!\left((\mathfrak Z_B(0,T)+z)\mathbin\triangle
 (\mathfrak Z_{B'}(0,T)+z')\right).
 \label{eq:first-point-stability}
\end{align}

{\rm (ii)} If $B_k\to B$ in the Hausdorff topology and $z_k\to z$, the
corresponding first-point laws converge against every bounded continuous
test function on $\mathbb R_{>0}\times E$ supported in $0<\xi\leq T$.

{\rm (iii)} If $B$ ranges over a Hausdorff-compact convex family whose inradii are
uniformly bounded below by a positive constant and whose outradii are
uniformly bounded above, and $z$ ranges over a compact subset of $E$, then
the laws are uniformly tight; explicitly,
\begin{equation*} \lim_{T\to\infty}\ \sup_{B,z}\mu_\alpha\{\xi_B>T\}=0.
\end{equation*}
\end{lemma}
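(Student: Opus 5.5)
For part (i) I would use a union bound together with the Siegel mean value formula on $X_\alpha$. The event in question is contained in the event that some point of $\Lambda_\alpha(g)$ lies in the bounded Borel set
\[
S=(\mathfrak Z_B(0,T)+z)\mathbin\triangle(\mathfrak Z_{B'}(0,T)+z').
\]
Its probability is therefore at most $\int \#(\Lambda_\alpha(g)\cap S)\,d\mu_\alpha(g)$. For $\alpha\notin\mathbb Q^d$ this integral equals $\vol(S)$, by the affine Siegel formula (the translation $\eta$ makes the intensity exactly Lebesgue). For rational $\alpha$ with $\alpha\notin\mathbb Z^d$ the same identity holds, since $\Lambda_\alpha(g)$ never contains $0$. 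For $\alpha\in\mathbb Z^d$ there is the extra term $\mathbf 1_S(0)$. Here $0$ lies on the boundary hyperplane $x_1=0$, so $0\notin S$ because both tubes are open in the $x_1$-direction. This gives \eqref{eq:first-point-stability}.

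For part (ii) I would fix a bounded continuous $f$ supported in $0<\xi\le T$. Outside the event of (i), with $T$ replaced by $T+1$, the point sets $\Lambda_\alpha(g)\cap(\mathfrak Z_{B_k}(0,T+1)+z_k)$ and $\Lambda_\alpha(g)\cap(\mathfrak Z_{B}(0,T+1)+z)$ coincide. So either both first points have $\xi\le T+1$ and agree, or both exceed $T+1$. The coordinates are related by $b_{B_k}=y-\xi e_1-z_k$, so $b_{B_k}(g)-b_B(g)=z-z_k\to0$.

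The test function should be extended to $\mathbb R_{>0}\times E$ so that it makes sense on $B_k$. Then
\[
\Bigl|\int f(\xi_{B_k},b_{B_k})\,d\mu_\alpha-\int f(\xi_B,b_B)\,d\mu_\alpha\Bigr|
\le 2\|f\|_\infty\,\vol(S_k)+\int|f(\xi_B,b_B+z-z_k)-f(\xi_B,b_B)|\,d\mu_\alpha .
\]
The last integral tends to $0$ by dominated convergence and continuity of $f$. It remains to show that $\vol(S_k)\to0$. Since $S_k\subset(0,T+1)\times\bigl((B_k+z_k)\mathbin\triangle(B+z)\bigr)$, this reduces to Lebesgue measure of symmetric differences of convex bodies in $E$. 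That measure is continuous under Hausdorff convergence of convex sets with nonempty interior and null boundary: the symmetric difference lies in an $\epsilon$-neighbourhood of $\partial B$, whose volume tends to $0$.

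For part (iii) I would use the inradius assumption. It gives $r>0$ such that every $B$ contains a ball $D_B=c_B+D_r$ with $D_r=\{u\in E:\|u\|<r\}$, and the outradius bound keeps $c_B$ bounded. Then $\{\xi_B>T\}$ is contained in the event that $\Lambda_\alpha(g)$ avoids $\mathfrak Z_{D_r}(0,T)+z+c_B$. The translate $w=z+c_B$ ranges over a compact subset of $E$. The translated-cylinder bound of \cite[Section~8.4]{MarklofStrom2010}, already invoked in Lemma~\ref{lem:general-shadow}, shows that the probability of avoiding $\mathfrak Z_{D_r}(0,T)+w$ tends to $0$ as $T\to\infty$. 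The main obstacle is making this decay uniform in $w$ over a compact set. I would first try to read uniformity directly off the explicit bound there, which depends on $w$ only through a bounded quantity. Failing that, I would cover the compact set of $w$ by finitely many balls of radius $r/2$ with centres $w_i$. Each tube $\mathfrak Z_{D_r}(0,T)+w$ contains $\mathfrak Z_{D_{r/2}}(0,T)+w_i$ for a suitable $i$, so the supremum is bounded by a finite maximum of quantities each tending to $0$.
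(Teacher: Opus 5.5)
Your proposal is correct and follows the paper's own proof. The paper uses Siegel's formula on the symmetric difference for (i), couples the two first-point constructions on the same lattice for (ii), and uses a uniformly sized inscribed ball with the translated-cylinder bound of \cite[Section~8.4]{MarklofStrom2010} for (iii). Your additional details fill in steps the paper leaves implicit: the zero vector lies outside the open tubes when $\alpha\in\mathbb Z^d$, the volume of the symmetric difference is continuous under Hausdorff convergence, and a finite covering of the compact set of translates gives uniformity in (iii).
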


\begin{proof}
Part (i) is Siegel's formula applied to the symmetric difference.  Coupling
the two first-point constructions on the same lattice gives (ii).  Finally,
every member of the family contains a ball of uniformly positive radius, so
the bound of \cite[Section~8.4]{MarklofStrom2010} gives (iii).
\end{proof}

\begin{remark}\label{rem:Phi-covariance}
Write $\mathrm O(E)$ and $\mathrm{SO}(E)$ for the orthogonal and
orientation-preserving orthogonal groups of the Euclidean space $E$.  For
$O\in\mathrm{SO}(E)$,
\begin{equation}\label{eq:Phi-covariance}
 \Phi_{\alpha,BO}(\xi,bO,zO)=\Phi_{\alpha,B}(\xi,b,z).
\end{equation}
The same identity holds for $O\in\mathrm O(E)$ in the affine-Haar case and
when $\alpha=0$.  Right multiplication by $\operatorname{diag}(1,O)$ proves
the first assertion.  When $\det O=-1$, compose on the left with any
$J\in\GL(d,\mathbb Z)$ of determinant $-1$; this proves the remaining cases.
\end{remark}

Apply Lemma~\ref{lem:general-shadow} with $B=B_v$.  Formula
\eqref{eq:Phi-definition} can be viewed as the
polyhedral analogue of \cite[Eq.~(4.14)]{MarklofStrom2010}.  For
$\alpha\notin\mathbb Q^d$ the kernel is independent of $\alpha$ and $z$;
we then write $\Phi_v(\xi,b)$.  Extended by zero for $b\notin B_v$, the map
$(v,\xi,b,z)\mapsto\Phi_{\alpha,v}(\xi,b,z)$ is Borel, as are its
restrictions to the face cells.  Lemma~\ref{lem:shadow-stability} gives the
required integrated continuity in $v$.

Fix a microscopic base point $q_\star\in\mathbb R^d$ and let
$q_{\rho,\beta}(v)=q_\star+\rho\beta(v)$, where $\beta:S_1^{d-1}\to\mathbb R^d$ is continuous. When
$q_\star\in\mathscr L$ assume the admissibility condition
\begin{equation}\label{eq:beta-admissibility}
 \bigl(\beta(v)+\mathbb R_{>0}v\bigr)
 \cap\overline{\mathcal P}=\varnothing
 \qquad(v\in S_1^{d-1});
\end{equation}
this excludes an immediate return to the source scatterer.  Let $\alpha$ be
the chosen representative of the class
$-q_\star M_0^{-1}+\mathbb Z^d$, taking $\alpha=0$ when
$q_\star\in\mathscr L$, and put
\begin{equation*} z_0(v)=\prperp\bigl(\beta(v)K(v)\bigr).
\end{equation*}
The family $\{B_v\}$ has Hausdorff-compact closure with the
required uniform inradius and outradius bounds, while $\{z_0(v)\}$ has compact
closure in $E$; hence Lemma~\ref{lem:shadow-stability}(iii) applies.
If the flight starts on a scatterer, $z_0(v)$ is its departure parameter.

Let $\mathcal A_{1,\rho}$ be the set of velocities for which
the first collision exists and is regular. On this set define
\begin{equation*}
 \mathcal I_{1,\rho}(v)=
 \bigl(\rho^{d-1}\tau_1(q_{\rho,\beta}(v),v;\rho),
       (v,W_1^{(\rho)})\bigr)
 \in\mathbb R_{>0}\times\mathcal B .
\end{equation*}
The following theorem identifies the limiting integrals of
this quantity as $\rho\to0$; taking $G\equiv1$ also implies that
$\lambda(\mathcal A_{1,\rho}^{c})\to0$.

\begin{theorem}\label{thm:one-flight}
Let $\lambda$ be a Borel probability measure on
$S_1^{d-1}$ absolutely continuous with respect to $\omega$. Then, for every
$G\in C_b(\mathbb R_{>0}\times\mathcal B)$,
\begin{align}
 &\lim_{\rho\to 0}\int_{\mathcal A_{1,\rho}}
 G\bigl(\mathcal I_{1,\rho}(v)\bigr)\,d\lambda(v) \notag\\
 &\quad=
 \sum_{\ell=1}^s\int_{S_1^{d-1}}\int_0^\infty\int_{B_{v,\ell}}
 G(\xi,(v,(\ell,b)))
 \Phi_{\alpha,v}(\xi,b,z_0(v))\,db\,d\xi\,d\lambda(v).
 \label{eq:one-flight-limit}
\end{align}
\end{theorem}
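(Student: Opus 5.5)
We follow the Marklof--Strömbergsson strategy: turn the first collision into a first-point problem for a random affine lattice, then feed in equidistribution of expanding horospheres. Put $D_\rho=\operatorname{diag}(\rho^{d-1},\rho^{-1},\ldots,\rho^{-1})$. Applying the affine map $x\mapsto\rho^{-1}(x-q_\star)K(v)D_\rho$ sends the scatterer centres $\mathscr L$ to an affine lattice of the form $\Lambda_\alpha(g_\rho(v))$, where
\begin{equation*}
 g_\rho(v)=\Gamma\,M_0\,K(v)D_\rho\in X_\alpha
\end{equation*}
(in the irrational case with translation part $-q_\star K(v)D_\rho$ suitably placed). Along the ray $q_{\rho,\beta}(v)+tv$, the rescaled time is $\xi=\rho^{d-1}t$. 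The obstacle $m+\rho\mathcal P$ is met exactly when the transformed centre $y=\xi e_1+b+z_0(v)$ lies in the tube $\mathfrak Z_{B_v}(0,\infty)+z_0(v)$, up to an error: the obstacle, rotated by $K(v)$ and compressed by $\rho^{d-1}$ in the $e_1$ direction, becomes a set of $e_1$-thickness $O(\rho^{d})$. Hence the hitting time differs from the first-coordinate of the first lattice point in the tube by $O(\rho^d)$. The first obstacle hit is the first lattice point in the tube, and its face label is $\face_v^-(b)$ with $b$ the impact coordinate of that point. In the case $q_\star\in\mathscr L$, the admissibility condition \eqref{eq:beta-admissibility} rules out the source scatterer at $y=0$, so it does not count.

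This yields the identity
\begin{equation*}
 \mathcal I_{1,\rho}(v)\approx\bigl(\xi_{B_v}(g_\rho(v)),(v,(\face_v^-(b_{B_v}(g_\rho(v))),b_{B_v}(g_\rho(v))))\bigr),
\end{equation*}
with $z=z_0(v)$. It holds except on an event of small measure: lattice points within $O(\rho^d)$ of the tube boundary or of the end caps. By Lemma~\ref{lem:shadow-stability}(i) together with Siegel's formula, this event has $\mu_\alpha$-measure $O(\rho^{d}\,T)$ uniformly, provided we restrict to $\xi\le T$.

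The main input, and the main obstacle, is the equidistribution of the translated spheres. For every $f\in C_b(S_1^{d-1}\times X_\alpha)$ we need
\begin{equation*}
 \int f(v,g_\rho(v))\,d\lambda(v)\to\int\!\!\int f\,d\mu_\alpha\,d\lambda ,
\end{equation*}
including the twisted affine version with $\beta$ and $q_\star$. This is exactly \cite[Theorem 5.3 and Corollary 5.4]{MarklofStrom2010}, which is proved by mixing/Ratner, so we invoke it rather than reprove it. The test function $F(v,g)=G(\xi_{B_v}(g),(v,W(g)))\mathbf 1\{\xi_{B_v}(g)\le T\}$ is not continuous, and handling this is the delicate step. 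Its discontinuities lie on the set where a lattice point sits on the tube boundary, on a face-cell boundary $\partial B_{v,\ell}$ (a null set, since face cells are polyhedral), or at a tie. By Lemma~\ref{lem:general-shadow}(i), Lemma~\ref{lem:shadow-stability}(i) and the measurability of $v\mapsto B_v$, this set is $\mu_\alpha\times\lambda$-null. Continuity in $v$ is needed because $B_v$, $z_0(v)$ and the cells move with $v$; here we use Lemma~\ref{lem:shadow-stability}(ii), together with Hausdorff continuity of $v\mapsto B_{v,\ell}$ away from $v_{\rm s}$ and away from velocities parallel to a face (a null set). With these in place, a standard approximation from above and below by continuous functions (the portmanteau argument) lets the equidistribution statement apply to $F$.

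Finally we remove the truncation. Lemma~\ref{lem:shadow-stability}(iii) gives $\sup_v\mu_\alpha\{\xi_{B_v}>T\}\to0$ as $T\to\infty$. The same tightness holds for finite $\rho$, via the equidistribution applied to the open event of no lattice point in the truncated tube. Thus the tail contributions vanish uniformly, and this also controls $\lambda(\mathcal A_{1,\rho}^c)$. Rewriting the limit with Lemma~\ref{lem:general-shadow}(ii) at $z=z_0(v)$, and splitting $B_v$ into the face cells $B_{v,\ell}$, gives \eqref{eq:one-flight-limit}.
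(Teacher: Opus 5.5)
Your proposal follows essentially the paper's route: the $D(\rho)$ reduction to a first-point problem for $(\mathbb Z^d+\alpha)M_0K(v)D(\rho)$ (with $0$ deleted when $q_\star\in\mathscr L$), the $O(\rho^d)$ end-cap sandwich \eqref{eq:tube-sandwich}, equidistribution from \cite{MarklofStrom2010} combined with almost-everywhere continuity of the first-point map, and tightness from Lemma~\ref{lem:shadow-stability}(iii); the only structural difference is that the paper removes the cutoffs and obtains $\lambda(\mathcal A_{1,\rho}^c)\to0$ by mass conservation via \eqref{eq:Phi-normalisation}, rather than by a separate finite-$\rho$ tail bound. Two small fixes are needed. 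First, the rescaling map is $x\mapsto(x-q_\star)K(v)D(\rho)$, without the factor $\rho^{-1}$. Second, the $\rho$-dependent exceptional event (end caps, near-ties) must be controlled under the pushforward of $\lambda$, by enlarging it to a fixed $\delta$-neighbourhood, applying equidistribution, and then letting $\delta\to0$ using the null-tie property of Lemma~\ref{lem:general-shadow}(i): an $O(\rho^dT)$ bound under $\mu_\alpha$ alone does not transfer to shrinking sets, and Siegel's first moment does not control near-ties.
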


\begin{proof}
We follow \cite[Theorem~4.4]{MarklofStrom2010}.
Write $q_{\rho,\beta}(v)+\tau v=m+\rho x$, where $m\in\mathscr L$ is the
centre of the first scatterer and $x\in\partial_*\mathcal P$ is the local
boundary point.  Put $\xi=\rho^{d-1}\tau$, $b=-\prperp(xK(v))$, and
$z=\prperp(\beta(v)K(v))$.  After applying
\begin{equation*} D(\rho)=\operatorname{diag}(\rho^{d-1},\rho^{-1},\ldots,\rho^{-1}),
\end{equation*}
the transformed centre set is
\begin{equation*} (\mathscr L-q_\star)K(v)D(\rho)
 =(\mathbb Z^d+\alpha)M_0K(v)D(\rho).
\end{equation*}
The terminal centre satisfies the exact identity
\begin{equation}\label{eq:terminal-centre-finite-rho}
 (m-q_\star)K(v)D(\rho)
 =\bigl[\xi+\rho^d(\beta(v)-x)\cdot v\bigr]e_1+b+z.
\end{equation}
More generally, a transformed lattice point is the centre of an obstacle
intersecting the open flight segment precisely when it belongs to the region
\begin{equation}\label{eq:finite-rho-collision-set}
 \mathfrak C_{\rho,v}(\xi,z):=
 \left\{[s+\rho^d(\beta(v)-p)\cdot v]e_1+z
 -\prperp(pK(v)):0<s<\xi,\ p\in\mathcal P\right\}.
\end{equation}
Since $\beta$ is bounded and $\mathcal P$ is bounded, there is a constant $C$ such that, uniformly in $v$ and whenever $\xi>2C\rho^d$,
\begin{align}
 \mathfrak Z_v(C\rho^d,\xi-C\rho^d)+z
 \subseteq{}&\ \mathfrak C_{\rho,v}(\xi,z)
 \subseteq\mathfrak Z_v(-C\rho^d,\xi+C\rho^d)+z.
 \label{eq:tube-sandwich}
\end{align}
If $q_\star\in\mathscr L$, we delete the zero vector; admissibility
\eqref{eq:beta-admissibility} excludes a return to the source scatterer.

Now take $G$ supported in $\varepsilon\leq\xi\leq T$ and
away from the face-cell
boundaries.  The equidistribution and lattice-counting results of
\cite[Sections~5--6 and Theorem~3.9]{MarklofStrom2010}, together with
\eqref{eq:tube-sandwich}, give convergence of the transformed affine lattice.
The first-point map is continuous off the null set of ties and boundary
points, and Lemma~\ref{lem:general-shadow} identifies its limiting law with
$\Phi_{\alpha,v}(\xi,b,z)\,d\xi\,db$.  The grazing set is $\omega$-null, and the cell boundaries have zero flux measure.  The discarded short flights have mass
$O(A_{\mathcal P}\varepsilon)$ by Siegel's formula, while
Lemma~\ref{lem:shadow-stability} controls the long flights.
The preceding argument first gives convergence on the
regular truncated region. Apply it to continuous cutoffs increasing to one
as $\varepsilon\to0$, $T\to\infty$, and the neighbourhoods of the
grazing and face-cell-boundary strata shrink. By
\eqref{eq:Phi-normalisation}, the limiting cutoff integrals tend to one, and
hence $\lambda(\mathcal A_{1,\rho}^{c})\to0$. The same estimates remove the
cutoffs for bounded continuous $G$, which proves \eqref{eq:one-flight-limit}.
\end{proof}

We will generalise Theorem~\ref{thm:one-flight} to $n$ collisions in Theorem~\ref{thm:impact-micro}.

\section{Arithmetic nonresonance of the faces}\label{sec:nonresonance}

Assume throughout this section that $d\geq3$. For
$A\in\GL(d,\mathbb R)$, put
\begin{equation*} [A]=\{cA:c\in\mathbb R^\times\}\in
 \PGL(d,\mathbb R)=\GL(d,\mathbb R)/(\mathbb R^\times I).
\end{equation*}
Thus $[A]\in\PGL(d,\mathbb Q)$ if some nonzero scalar multiple of $A$ is
rational.  Two subgroups are \emph{commensurable} if their intersection has
finite index in each.  The commensurator of $\Gamma\leq G$ is
\begin{equation}\label{eq:commensurator-definition}
 \Comm_G(\Gamma)=
 \{g\in G:\Gamma\text{ and }g\Gamma g^{-1}
                 \text{ are commensurable}\}.
\end{equation}
For the integer special linear group,
\begin{equation}\label{eq:commensurator}
 \mathscr S_d=
 \Comm_{\SL(d,\mathbb R)}\bigl(\SL(d,\mathbb Z)\bigr)
 =\left\{cT\in\SL(d,\mathbb R):
 T\in\GL(d,\mathbb Q),\ c\in\mathbb R^\times\right\}.
\end{equation}
Thus, for $A\in\SL(d,\mathbb R)$,
$A\in\mathscr S_d$ if and only if $[A]\in\PGL(d,\mathbb Q)$.

Let
\begin{equation*} \mathcal W_s=\langle a_1,\ldots,a_s\mid a_\ell^2=e\rangle
 =\mathbb Z_2*\cdots*\mathbb Z_2
\end{equation*}
be the free product on the formal face involutions.  A word is {\em reduced} if
adjacent letters are distinct.  If $w=\ell_1\cdots \ell_r$, put
$R_w=R_{\ell_1}\cdots R_{\ell_r}$ and $|w|=r$.

The labelled polyhedron $\mathcal P$ is called
\emph{$\mathscr L$-nonresonant} if
\begin{equation}\label{eq:NR}
 [M_0R_wM_0^{-1}]\notin\PGL(d,\mathbb Q)
 \quad\text{for every nonempty reduced }w\in\mathcal W_s.
\end{equation}

Replacing $M_0$ by $\gamma M_0$, $\gamma\in\SL(d,\mathbb Z)$, conjugates each class in \eqref{eq:NR} by a
rational projective class and does not affect the condition. The condition
excludes parallel faces, since their two reflections have product $I$, and
makes the $[R_\ell]$ generate the free product.

Let $J=\operatorname{diag}(-1,1,\ldots,1)$.
(In odd dimension one may equivalently use $J=-I$.)

\begin{lemma}\label{lem:pairwise}
For a reduced word $u$ put
\begin{equation*} M_u^\circ=J^{|u|}M_0R_u\in\SL(d,\mathbb R).
\end{equation*}
Then $\mathbb Z^dM_u^\circ=\mathbb Z^dM_0R_u$, so $M_u^\circ$ is an
oriented basis of the unfolded lattice associated with $u$.  If
$\mathcal P$ is $\mathscr L$-nonresonant and $u\ne v$, then the arithmetic
lattices
\begin{equation*} \Gamma_u=(M_u^\circ)^{-1}\SL(d,\mathbb Z)M_u^\circ,
 \qquad
 \Gamma_v=(M_v^\circ)^{-1}\SL(d,\mathbb Z)M_v^\circ
\end{equation*}
are incommensurable.
\end{lemma}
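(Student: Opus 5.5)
The plan has two steps. First verify that $M_u^\circ$ lies in $\SL(d,\mathbb R)$ and spans the same lattice as $M_0R_u$. Then reduce incommensurability to the nonresonance condition \eqref{eq:NR} via the commensurator description \eqref{eq:commensurator}.

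\textbf{First claim.} Each $R_\ell$ is a reflection, so $\det R_u=(-1)^{|u|}$. Also $\det J=-1$ and $\det M_0=1$. Hence $\det M_u^\circ=(-1)^{|u|}(-1)^{|u|}=1$. Moreover $J\in\GL(d,\mathbb Z)$, so $\mathbb Z^dJ^{|u|}=\mathbb Z^d$, and therefore $\mathbb Z^dM_u^\circ=\mathbb Z^dM_0R_u$. This proves the first assertion.

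\textbf{Second claim: reduction.} Put $g=M_u^\circ(M_v^\circ)^{-1}$, which lies in $\SL(d,\mathbb R)$. Then
\begin{equation*}
\Gamma_v=(M_v^\circ)^{-1}\SL(d,\mathbb Z)M_v^\circ
=(M_u^\circ)^{-1}\bigl(g\,\SL(d,\mathbb Z)\,g^{-1}\bigr)M_u^\circ .
\end{equation*}
Conjugation by $M_u^\circ$ preserves commensurability. So $\Gamma_u$ and $\Gamma_v$ are commensurable if and only if $\SL(d,\mathbb Z)$ and $g\SL(d,\mathbb Z)g^{-1}$ are. By \eqref{eq:commensurator}, this holds if and only if $g\in\mathscr S_d$, that is, $[g]\in\PGL(d,\mathbb Q)$.

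\textbf{Computing $g$.} We have
\begin{equation*}
g=J^{|u|}M_0R_uR_v^{-1}M_0^{-1}J^{-|v|}.
\end{equation*}
Since $J$ is rational, $[g]\in\PGL(d,\mathbb Q)$ if and only if $[M_0R_uR_v^{-1}M_0^{-1}]\in\PGL(d,\mathbb Q)$. Each $R_\ell$ is an involution, so $R_v^{-1}=R_{v^{-1}}$, where $v^{-1}$ is the reversed word. Hence $R_uR_v^{-1}=R_{w'}$ with $w'=uv^{-1}$.

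Now free-reduce $w'$ in $\mathcal W_s$ by cancelling adjacent equal letters, each of which contributes $R_\ell^2=I$. This gives a reduced word $w$ with $R_{w}=R_{w'}$. Since $u$ and $v$ are distinct reduced words, they are distinct elements of the free product $\mathcal W_s$. Therefore $uv^{-1}\neq e$, and $w$ is nonempty.

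\textbf{Conclusion.} Nonresonance \eqref{eq:NR} then gives $[M_0R_wM_0^{-1}]\notin\PGL(d,\mathbb Q)$. Hence $g\notin\mathscr S_d$, and $\Gamma_u$ and $\Gamma_v$ are incommensurable.

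\textbf{Main obstacle.} The only delicate step is the commensurator characterisation \eqref{eq:commensurator}, which is assumed from the paper; it is a classical result, Borel's density/commensurator theorem for $\SL(d,\mathbb Z)$. Beyond that, one must check that the convention $\Gamma=M^{-1}\SL(d,\mathbb Z)M$ produces $g=M_u^\circ(M_v^\circ)^{-1}$ and not its inverse. This is harmless, because $\mathscr S_d$ is a group.
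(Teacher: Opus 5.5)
Your proof is correct and follows the paper's argument. You reduce $uv^{-1}$ to a nonempty reduced word $w$, discard the rational factors $J^{|u|}$ and $J^{-|v|}$ projectively, and conclude from \eqref{eq:NR} together with \eqref{eq:commensurator}; the only differences are that you verify the determinant and lattice identity explicitly, and you derive the equivalence between commensurability of $\Gamma_u,\Gamma_v$ and $M_u^\circ(M_v^\circ)^{-1}\in\mathscr S_d$ by direct conjugation, where the paper cites \cite[Lemma~8]{MarklofStrom2014}.
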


\begin{proof}
Let $w$ be the reduced representative of $uv^{-1}$ in $\mathcal W_s$.
Since $u\ne v$, the word $w$ is
nonempty.  Since $J$ has rational entries,
\begin{align*} &[(M_u^\circ)(M_v^\circ)^{-1}]\in\PGL(d,\mathbb Q)\\
 &\qquad\Longleftrightarrow
 [M_0R_uR_v^{-1}M_0^{-1}]\in\PGL(d,\mathbb Q)\\
 &\qquad\Longleftrightarrow
 [M_0R_wM_0^{-1}]\in\PGL(d,\mathbb Q).
\end{align*}
The last condition is excluded by assumption \eqref{eq:NR}.  The relative matrix on
the first line lies in $\SL(d,\mathbb R)$, so
\eqref{eq:commensurator} shows that it is not in $\mathscr S_d$.  By
\cite[Lemma~8]{MarklofStrom2014}, this is equivalent to
incommensurability of $\Gamma_u$ and $\Gamma_v$.
\end{proof}

We next justify the use of ``generic'' for an $\mathscr L$-nonresonant polyhedron. An \emph{open realisation stratum} is
a nonempty open subset $\mathcal U\subset(\mathbb{RP}^{d-1})^s$ such that,
for every $([n_1],\ldots,[n_s])\in\mathcal U$, unit representatives $n_\ell$ and support numbers $h_\ell$ can be chosen so that
\begin{equation*}
 \bigcap_{\ell=1}^s\{x\in\mathbb R^d:x\cdot n_\ell<h_\ell\}
\end{equation*}
is a bounded full-dimensional polyhedron of the fixed labelled combinatorial
type.

\begin{lemma}\label{lem:word-map}
For every nonempty reduced word $w\in\mathcal W_s$, the real-algebraic map
\begin{equation}\label{eq:projective-word-map}
 \mathcal R_w:(\mathbb{RP}^{d-1})^s\longrightarrow\PGL(d,\mathbb R),
 \qquad ([n_1],\ldots,[n_s])\longmapsto[R_w]
\end{equation}
is nonconstant on every nonempty open subset.
\end{lemma}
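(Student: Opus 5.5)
The plan is to reduce the local statement to a global one using analyticity, and then to decide the global question by exhibiting two configurations at which $\mathcal R_w$ takes different values.

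\emph{Step 1: reduction to global nonconstancy.} The map $(n_1,\ldots,n_s)\mapsto R_w=\prod_i(I-2\,{}^tn_{\ell_i}n_{\ell_i})$ is polynomial in the unit normals. It is invariant under $n_\ell\mapsto-n_\ell$, so it descends to a real-analytic map $(\mathbb{RP}^{d-1})^s\to\GL(d,\mathbb R)$, and composing with $\GL\to\PGL$ gives $\mathcal R_w$. The source is a connected real-analytic manifold. Suppose $\mathcal R_w$ were constant, equal to $[A_0]$, on a nonempty open set $\mathcal U$. Choose an entry $(i,j)$ with $(A_0)_{ij}\neq0$. On the connected open set where the $(i,j)$ entry of $R_w$ is nonzero, the ratios $(R_w)_{kl}/(R_w)_{ij}$ are analytic, and they are constant on $\mathcal U$. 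By the identity theorem they are then constant on that whole set, which is dense because its complement is a proper analytic subset. By continuity, $[R_w]$ is then globally constant. It therefore suffices to show that $\mathcal R_w$ is not constant on all of $(\mathbb{RP}^{d-1})^s$.

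\emph{Step 2: the degenerate configurations.} Take all $n_\ell$ equal to a single unit vector $n$. Then $R_w=R_n^{|w|}$.

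If $|w|$ is odd, this equals $R_n$. The class $[R_n]$ determines $n$ up to sign: $R_n$ has eigenvalue $-1$ with multiplicity one and eigenvalue $1$ with multiplicity $d-1\geq2$, so $R_n$ is not a scalar multiple of $R_{n'}$ unless $[n]=[n']$. Hence $\mathcal R_w$ already varies along the diagonal, and we are done.

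If $|w|$ is even, the diagonal gives $[R_w]=[I]$. It remains to find one configuration with $[R_w]\neq[I]$. Since $R_w$ is orthogonal, the only scalar orthogonal matrices are $\pm I$, so it suffices to have $R_w\neq\pm I$.

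\emph{Step 3: a free configuration (the main obstacle).} We need unit vectors $n_1,\ldots,n_s$ whose reflections generate a copy of $\mathcal W_s$ in $\mathrm O(d)$ modulo $\pm I$. Then every nonempty reduced word satisfies $R_w\neq\pm I$.

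Since $d\geq3$, restrict to normals lying in a fixed $3$-dimensional subspace $V$. On $V$, $-R_{n}|_V$ is the half-turn about the axis $n$, and every $R_{n_\ell}$ acts as the identity on $V^\perp$. Consequently $R_w=\pm I$ on $\mathbb R^d$ forces $R_w|_V=I$, because $R_w$ is already the identity on $V^\perp$. Writing $R_w|_V=\pm H_w$, where $H_w$ is the corresponding word in half-turns, this means $H_w=\pm I$ on $V$, and since $H_w\in\SO(V)$ we get $H_w=I$.

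So the task reduces to the classical fact that half-turns about suitably chosen (indeed generic) axes in $\mathbb R^3$ generate the free product $\mathbb Z_2*\cdots*\mathbb Z_2$. This is in the spirit of Hausdorff's paradox, and we would cite \cite{Dekker1959} for it. If a self-contained argument is preferred, we would first try a ping-pong argument, or an algebraic-independence argument for axes with transcendental coordinates that tracks the leading term of the word.

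With such a configuration, $\mathcal R_w$ takes the value $[I]$ on the diagonal and a value different from $[I]$ at the free configuration. So $\mathcal R_w$ is globally nonconstant, and by Step 1 it is nonconstant on every nonempty open subset.
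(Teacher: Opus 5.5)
Your argument is sound in outline and uses the same key input as the paper: Dekker's free half-turns in $\SO(3)$, extended to $d>3$ by acting trivially on a complement. Where you differ is in how you pin down the would-be constant value. The paper assumes a global constant $[C]$ and uses invariance under simultaneous rotation of the normals to get $Q^{-1}CQ=c(Q)C$ for a character $c$ of $\SO(d)$, which it shows is trivial. Since the commutant of the standard representation is scalar for $d\geq3$, this forces $[C]=[I]$, and Dekker then excludes $[I]$ for words of either parity. You instead evaluate on the diagonal $n_1=\cdots=n_s$. For odd $|w|$ this already shows variation, so Dekker is not needed there; your eigenvalue-multiplicity argument correctly uses $d\geq3$ to rule out $R_n=-R_{n'}$. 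For even $|w|$ the diagonal gives the value $[I]$, and Dekker supplies a configuration with a different value. Your route is more elementary, needing no character or Schur-type commutant argument. The paper's route is uniform in the parity of $w$ and shows that $[I]$ is the only possible constant value.

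Step 1 needs a small repair. The set $\{(R_w)_{ij}\neq0\}$ is open and dense, but it is in general not connected. For a one-letter word, $(R_n)_{12}=-2n_1n_2$ is a well-defined function on $\mathbb{RP}^{d-1}$ that takes both signs. So the identity theorem applied to the ratios only spreads constancy to the component containing $\mathcal U$. The fix is to apply it instead to the globally defined analytic functions $(A_0)_{ij}(R_w)_{kl}-(A_0)_{kl}(R_w)_{ij}$. These vanish on $\mathcal U$, hence on the connected manifold $(\mathbb{RP}^{d-1})^s$, and invertibility of $R_w$ then gives $[R_w]=[A_0]$ everywhere.

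There is also a minor slip in Step 3 when $d=3$. There $V^\perp=0$, so $R_w=\pm I$ does not force $R_w|_V=I$. This is harmless, because your next sentence already handles $R_w|_V=\pm I$ using $-I_3\notin\SO(3)$.
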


\begin{proof}
Write $R([n])$ for reflection in the hyperplane orthogonal to $[n]$.  For a
nonzero row vector $n$,
\begin{equation*} [R([n])]=[(n\,{}^tn)I-2\,{}^tn\,n].
\end{equation*}
Hence \eqref{eq:projective-word-map} is real algebraic.  If it were constant
on an open set, the analytic identity principle would make it constant on
$(\mathbb {RP}^{d-1})^s$; write its value as $[C]$.  Simultaneous rotation gives
\begin{equation*}
 [Q^{-1}CQ]=[C]\qquad(Q\in\SO(d)).
\end{equation*}
Thus $Q^{-1}CQ=c(Q)C$ for a continuous character
$c:\SO(d)\to\mathbb R^\times$.  The Frobenius norm gives
$c(Q)\in\{-1,1\}$, and connectedness of $\SO(d)$ gives $c\equiv1$.
The commutant of the standard $\SO(d)$-representation is scalar for
$d\geq3$, so the constant value would be $[I]$.

For $d=3$, Dekker's theorem
\cite[Theorem~2]{Dekker1959}, applied to $s$ cyclic groups of order two, gives
rotations $H_1,\ldots,H_s\in\SO(3)$, each of order two, for which
$a_\ell\mapsto H_\ell$ extends to an injective homomorphism
$\mathcal W_s\to\SO(3)$.  Since $S_\ell=-H_\ell$ is a reflection, for
$w=\ell_1\cdots\ell_r$ put $H_w=H_{\ell_1}\cdots H_{\ell_r}$ and
$S_w=S_{\ell_1}\cdots S_{\ell_r}$.  Then
\begin{equation*}
 S_w=(-I_3)^rH_w.
\end{equation*}
If $S_w$ were scalar, then $H_w=I_3$, contradicting injectivity.

For $d>3$, put
\begin{equation*}
 S_\ell^{(d)}=\operatorname{diag}(S_\ell,I_{d-3}).
\end{equation*}
The corresponding word product is
$S_w^{(d)}=S_{\ell_1}^{(d)}\cdots S_{\ell_r}^{(d)}
=\operatorname{diag}(S_w,I_{d-3})$, which cannot be scalar.  Thus
$\mathcal R_w$ is not identically $[I]$, a contradiction.
\end{proof}

The following proposition shows that an $\mathscr L$-nonresonant polyhedron is generic in both a measure-theoretic and topological sense.

\begin{proposition}\label{prop:genericity}
Fix $d\geq3$ and $\mathscr L=\mathbb Z^dM_0$.  In every open realisation stratum
$\mathcal U$ as above, the $\mathscr L$-nonresonant normal configurations
form a conull dense $G_\delta$ subset of $\mathcal U$.
\end{proposition}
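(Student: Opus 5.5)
The plan is to write the resonant set in $\mathcal U$ as a countable union of closed, nowhere dense, Lebesgue-null sets. Taking complements then gives a $G_\delta$ set that is conull and, by Baire's theorem ($\mathcal U$ is an open subset of a manifold), dense. The reduced words $w\in\mathcal W_s$ form a countable set, and so does $\PGL(d,\mathbb Q)$. For each nonempty reduced $w$ and each $[T]\in\PGL(d,\mathbb Q)$, put
\begin{equation*}
 Z_{w,T}=\bigl\{\mathbf n\in\mathcal U:[M_0\,\mathcal R_w(\mathbf n)M_0^{-1}]=[T]\bigr\}
 =\mathcal R_w^{-1}\bigl([M_0^{-1}TM_0]\bigr)\cap\mathcal U .
\end{equation*}
The resonant configurations are exactly $\bigcup_{w,T}Z_{w,T}$.

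\emph{Closedness.} Since $\mathcal R_w$ is continuous (indeed real algebraic, by Lemma~\ref{lem:word-map}) and points of $\PGL(d,\mathbb R)$ are closed, each $Z_{w,T}$ is relatively closed in $\mathcal U$.

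\emph{Null and nowhere dense.} Fix $w$ and $[C]=[M_0^{-1}TM_0]$. Choose a representative $C$ and view the equation $[R_w]=[C]$ in affine charts of $(\mathbb{RP}^{d-1})^s$. There it becomes the vanishing of the real-analytic (in fact polynomial) functions
\begin{equation*}
 \mathbf n\mapsto \text{all }2\times2\text{ minors of the pair }\bigl(P_w(\mathbf n),C\bigr),
\end{equation*}
where $P_w(\mathbf n)=\prod_k\bigl((n_{\ell_k}{}^tn_{\ell_k})I-2\,{}^tn_{\ell_k}n_{\ell_k}\bigr)$ is the polynomial lift from the proof of Lemma~\ref{lem:word-map}. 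These minors say that the two matrices are proportional.

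If every such minor vanished on a nonempty open subset of a connected chart domain, then $\mathcal R_w$ would be constant on that open set. This contradicts Lemma~\ref{lem:word-map}. Hence at least one minor is a real-analytic function on the connected manifold $(\mathbb{RP}^{d-1})^s$ that does not vanish identically on any open set. Its zero set therefore has empty interior and Lebesgue measure zero; this is the standard fact that zero sets of nontrivial real-analytic functions are null. So $Z_{w,T}$ is a closed, null subset of $\mathcal U$. A closed set with empty interior is nowhere dense.

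\emph{Conclusion.} The complement $\mathcal U\setminus\bigcup_{w,T}Z_{w,T}$ is a countable intersection of open dense sets, hence a dense $G_\delta$. It is also conull, since a countable union of null sets is null. Here "null" refers to the smooth measure class on $(\mathbb{RP}^{d-1})^s$.

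One subtlety must be checked: the nonresonance condition \eqref{eq:NR} is stated in terms of unit normals, while the configurations live in $(\mathbb{RP}^{d-1})^s$. This causes no trouble, because $R_\ell$ depends only on $[n_\ell]$, so the condition is well defined on $\mathcal U$.

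The only step with real content is the nonconstancy of $\mathcal R_w$ on open sets, and this is exactly Lemma~\ref{lem:word-map}. The main point to handle with care is therefore the passage from "not constant on any open set" to "each fibre is null". This requires working in local analytic coordinates and using that $(\mathbb{RP}^{d-1})^s$ is connected, so that a minor which is not identically zero cannot vanish on an open set.
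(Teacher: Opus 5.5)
Your proposal is correct and follows the paper's proof. In both, the resonant set is the countable union, over nonempty reduced words $w$ and classes in $\PGL(d,\mathbb Q)$, of fibres of $\mathcal R_w$, and Lemma~\ref{lem:word-map} makes each fibre a real-algebraic set with empty interior, hence closed, null and nowhere dense; your argument with $2\times 2$ minors in affine charts spells out the step the paper compresses into the phrase ``proper real-algebraic subset''.
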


\begin{proof}
The exceptional set is
\begin{equation*} \bigcup_{w\in\mathcal W_s\setminus\{e\}}\ \bigcup_{C\in\PGL(d,\mathbb Q)}
 \left\{\boldsymbol n\in\mathcal U:
 [M_0R_w(\boldsymbol n)M_0^{-1}]=C\right\}.
\end{equation*}
Both unions are countable.  By Lemma~\ref{lem:word-map}, each fibre is a
proper real-algebraic subset of $\mathcal U$, hence closed, null and nowhere
dense.  The complement is therefore conull and a dense $G_\delta$.
Adding support numbers and normalising the volume do not
change the conclusion.
\end{proof}

\begin{remark}\label{rem:vorobets}
For $d=3$, Vorobets \cite[Theorem~4]{Vorobets2005} proves generic faithfulness
of the reflection representation.  Together with the analyticity and
$\SO(3)$-equivariance argument above, this gives another proof of
Lemma~\ref{lem:word-map}.
\end{remark}

\begin{remark}\label{rem:d2}
In $d=2$, every orientation-reversing orthogonal matrix squares to the
identity.  If a polygon has parallel faces,
their two face reflections already have product $I$.  Otherwise choose three
distinct face labels $\ell_1,\ell_2,\ell_3$; then
$(R_{\ell_1}R_{\ell_2}R_{\ell_3})^2=I$, while
$\ell_1\ell_2\ell_3\ell_1\ell_2\ell_3$ is a
nonempty reduced word.  This shows that our nonresonance condition (which we require to hold for all reduced words) has no two-dimensional analogue.
\end{remark}

\section{Product equidistribution and higher-order collision laws}
\label{sec:collision-limit}

The diagonal flow compatible with the Boltzmann--Grad scaling is
\begin{equation*} A^t=\operatorname{diag}(e^{-(d-1)t},e^t,\ldots,e^t),
 \qquad D(\rho)=A^{\log(1/\rho)}.
\end{equation*}
We use the following velocity-dependent form of the product theorem from
\cite[Theorems~7 and 10]{MarklofStrom2014}.  Given translations
$\alpha_0,\ldots,\alpha_N$, we use the shorthand
$(Y_j,\eta_j)=(X_{\alpha_j},\mu_{\alpha_j})$, with
$(X_\alpha,\mu_\alpha)$ defined in Section~\ref{sec:phi}.  Furthermore, put
\begin{equation*} \psi_\alpha(M)=
 \begin{cases}
  \ASL(d,\mathbb Z)(I,\alpha)(M,0),&\alpha\notin\mathbb Q^d,\\
  \Gamma(q_\alpha)M,&\alpha\in\mathbb Q^d.
 \end{cases}
\end{equation*}

Matrices $M_0,\ldots,M_N\in\SL(d,\mathbb R)$ are called \emph{pairwise
incommensurable} when the arithmetic subgroups
$M_j^{-1}\SL(d,\mathbb Z)M_j$ are pairwise incommensurable in the sense of
\eqref{eq:commensurator-definition}.

\begin{theorem}\label{thm:product-equi}
Let $N\geq0$ and let $M_0,\ldots,M_N\in\SL(d,\mathbb R)$ be pairwise
incommensurable.  If
$\lambda$ is a Borel probability measure absolutely continuous with respect
to $\omega$, and
$F:S_1^{d-1}\times Y_0\times\cdots\times Y_N\to\mathbb R$ is bounded and
continuous, then
\begin{align*} &\lim_{t\to\infty}\int_{S_1^{d-1}}
 F\bigl(v,\psi_{\alpha_0}(M_0K(v)A^t),\ldots,
 \psi_{\alpha_N}(M_NK(v)A^t)\bigr)\,d\lambda(v)\\
 &\qquad=\int_{S_1^{d-1}}\int_{Y_0\times\cdots\times Y_N}
 F(v,g_0,\ldots,g_N)\,d\eta_0(g_0)\cdots d\eta_N(g_N)\,d\lambda(v).
\end{align*}
\end{theorem}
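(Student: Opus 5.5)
The plan is to deduce Theorem~\ref{thm:product-equi} from the joint equidistribution theorem \cite[Theorems~7 and 10]{MarklofStrom2014}. That result is the case of a single sphere direction: $K(v)$ is replaced by a fixed rotation, and $\lambda$ by a measure on a small neighbourhood. The key input is Ratner's measure classification on the product space $\prod_j \Gamma_j\backslash \SL(d,\mathbb R)$, where $\Gamma_j=M_j^{-1}\SL(d,\mathbb Z)M_j$, together with the fact that pairwise incommensurability forces the relevant orbit closure to be the full product.

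\textbf{Step 1: one homogeneous space.} Consider the diagonal embedding $M\mapsto(M_0M,\ldots,M_NM)$. Its image in $\prod_j \SL(d,\mathbb Z)\backslash\SL(d,\mathbb R)$ is the $\SL(d,\mathbb R)$-orbit of the point $(M_0,\ldots,M_N)$. By Ratner's orbit closure theorem, the closure of this orbit is homogeneous, of the form $xH$. The stabiliser of the point is the intersection of the $\Gamma_j$ embedded diagonally. Since $\SL(d,\mathbb R)$ is simple, the standard Goursat-type argument shows that $H$ is a product of diagonal copies indexed by a partition of $\{0,\ldots,N\}$. Two indices $i,j$ lie in the same block only if $\Gamma_i$ and $\Gamma_j$ are commensurable. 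Pairwise incommensurability therefore makes every block a singleton, so the orbit is dense and equidistributes with respect to the product Haar measure.

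\textbf{Step 2: affine and congruence fibres.} Lift to the spaces $Y_j$. For rational $\alpha_j$, the space $X_{q_{\alpha_j}}$ is a finite cover, and the same argument applies with $\Gamma(q)$ in place of $\SL(d,\mathbb Z)$. For irrational $\alpha_j$, use $\ASL$ and apply the Marklof--Str\"ombergsson argument to show that the $\mathbb R^d$-fibre direction also fills up. The fixed translation $(I,\alpha_j)$ is generic there, so the limit measure is invariant under the full unipotent radical.

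\textbf{Step 3: sphere averaging by thickening.} To pass from orbits to spheres, I use the standard thickening argument. The map $v\mapsto K(v)$ is smooth off $v_{\rm s}$, so the expanding horospherical subgroup of $A^t$ is locally parametrised by $v$. A density $\lambda$ against $F(v,\cdot)$ then becomes an average over a piece of an expanding horosphere, smeared by $v$. Equidistribution of expanding translates follows from mixing on the product space via Margulis' thickening: thicken in the weak-stable directions, use uniform continuity of $F$, and use ergodicity of $A^t$ with respect to product Haar. Approximating $\lambda$ by continuous densities handles general absolutely continuous $\lambda$. For noncompactly supported $F$, uniform tightness on cusps (non-escape of mass) lets us truncate.

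\textbf{Main obstacle.} The hard part is mixing on the product space, which requires joint ergodicity. This is exactly where incommensurability enters, via the measure rigidity of Step 1. I would simply cite \cite{MarklofStrom2014} for this step and verify the remaining reduction details.
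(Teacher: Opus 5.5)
\textbf{Gap: Step~3 would fail.} This is also where you misplace the role of incommensurability. Margulis' thickening argument turns mixing into equidistribution of translates of an open piece of the \emph{full} expanding horospherical subgroup of the ambient group. Here the ambient group is $\SL(d,\mathbb R)^{N+1}$. For the diagonal flow $A^t$, its expanding horospherical subgroup is $U^{N+1}$, with $U\cong\mathbb R^{d-1}$ the one-factor horosphere. Your set $\{(M_0K(v),\ldots,M_NK(v))\}$ only sweeps out, up to weak-stable directions, a piece of the diagonal $\Delta(U)$. That has codimension $N(d-1)$ in $U^{N+1}$, so thickening it in the weak-stable directions produces no open set, and mixing cannot be applied.

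Nor can incommensurability enter through ``joint ergodicity''. The diagonal $A^t$-action on $Y_0\times\cdots\times Y_N$ is mixing for product Haar measure for \emph{any} $M_0,\ldots,M_N$, since it is a product of mixing actions. A concrete test: take $N=1$, $\alpha_0=\alpha_1=0$ and $M_1=M_0$. Every ingredient of your Step~3 is still available, yet the points $(\psi_0(M_0K(v)A^t),\psi_0(M_0K(v)A^t))$ stay on the diagonal and cannot equidistribute to $\mu_0\otimes\mu_0$.

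\textbf{What is actually needed.} The missing idea is measure rigidity for the limit measures themselves. Any weak limit of the diagonal sphere translates is invariant under the unipotent group $\Delta(U)$. Ratner's measure classification, combined with linearisation and non-divergence (as in Shah's theorem \cite[Theorem~1.4]{Shah1996}), identifies that limit with the Haar measure on the closure of the diagonal $\SL(d,\mathbb R)$-orbit. Only then does your Step~1 give the product measure.

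That is the content of \cite[Theorems~7 and 10]{MarklofStrom2014}. Contrary to your description, these are not single-direction statements: they already treat the sphere averages $\int F(\psi_{\alpha_0}(M_0K(v)A^t),\ldots)\,d\lambda(v)$ for absolutely continuous $\lambda$ and $F$ independent of $v$. The paper cites them directly, and the only remaining task is the $v$-dependence of $F$. This is handled by the standard approximation of \cite[Theorem~5.3]{MarklofStrom2010}:
\begin{itemize}
\item the $v$-independent convergence gives tightness of the push-forwards;
\item $F$ is uniformly continuous on $S_1^{d-1}\times C$ for each compact $C$;
\item on a fine partition $S_1^{d-1}=\bigsqcup_iU_i$, replace $F(v,\cdot)$ by $F(v_i,\cdot)$ and apply the $v$-independent result to the (normalised) absolutely continuous measures $\lambda|_{U_i}$.
\end{itemize}
Your Step~3 mentions these ingredients, but they should be the whole argument. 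The thickening step, and the re-derivation in Steps~1--2, should be replaced by the citation.
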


\begin{proof}
For $F$ independent of $v$, this is \cite[Theorems~7 and 10]
{MarklofStrom2014}.  The stated form follows by a standard approximation argument; see, e.g., \cite[Theorem~5.3]{MarklofStrom2010}.
\end{proof}

\begin{remark}\label{rem:nonproduct}
Without the incommensurability condition, Shah's theorem \cite[Theorem~1.4]{Shah1996} still gives an analogous convergence statement, but the limiting measure is generally not the product measure displayed in Theorem~\ref{thm:product-equi}.
\end{remark}

Fix 
$\boldsymbol\ell=(\ell_1,\ldots,\ell_n)$ and put, for $0\leq j\leq n-1$,
\begin{equation*} P_j=R_{\ell_1}\cdots R_{\ell_j},
 \qquad v_j=v_0P_j,
 \qquad K_j(v_0)=P_j^{-1}K(v_0).
\end{equation*}
Then $v_jK_j(v_0)=e_1$.  The first transformed point set is
$(\mathscr L-q_\star)K(v_0)D(\rho)$; after the $j$th collision,
$\mathscr L-m_j=\mathscr L$, and the next centres form
$\mathscr LP_j^{-1}K(v_0)D(\rho)$. An $\SL(d,\mathbb R)$-representative of this point set is
$\widetilde M_jK(v_0)A^t$, where
\begin{align*}
 \widetilde M_0&=M_0,\\
 \widetilde M_j
 &=J^jM_0R_{\ell_j}\cdots R_{\ell_1}
 =J^jM_0P_j^{-1}\in\SL(d,\mathbb R)
 \qquad (j\geq1).
\end{align*}
Here $J$ ensures the determinant of $\widetilde M_j$ is one but does not change the point set.
The first factor has translation $\alpha$, and the later
factors have translation $0$.
Define
\begin{equation*} \widehat O_j(v_0)=K_j(v_0)^{-1}K(v_j)
 =\operatorname{diag}(1,O_j(v_0)),\qquad O_j(v_0)\in\mathrm O(E).
\end{equation*}
It fixes $e_1$, commutes with $A^t$, and converts the transported frame to
$K(v_j)$.  Thus $\widehat O_0=I$ and
\begin{equation*} \mathbb Z^d\widetilde M_jK(v_0)A^t\widehat O_j(v_0)
 =\mathscr L K(v_j)A^t,
 \qquad \det O_j=(-1)^j.
\end{equation*}
For $j\geq1$, Remark~\ref{rem:Phi-covariance} applies even when
$\det O_j=-1$.

We next define the collision-record space before constructing the recursive
first-point map.  For $n\geq1$, let $\Omega_n$ consist of the regular records
\begin{equation*} \varpi=(v_0,\xi_1,b_1,\ldots,\xi_n,b_n),
 \qquad \xi_j>0,\quad b_j\in B_{v_{j-1}},
\end{equation*}
with recursion
\begin{equation}\label{eq:Omega-recursion}
 \ell_j=\face_{v_{j-1}}^-(b_j),\qquad
 W_j=(\ell_j,b_j),\qquad
 v_j=\mathcal R(v_{j-1},b_j),\qquad
 z_j=\mathcal Z(v_{j-1},b_j).
\end{equation}
Give $\Omega_n$ the disjoint-union topology over its face words, and put
\begin{equation*} d\sigma_\lambda^{(n)}(\varpi)
 =d\lambda(v_0)\prod_{j=1}^n d\xi_j\,db_j.
\end{equation*}
A face word $\boldsymbol\ell=(\ell_1,\ldots,\ell_n)$ is \emph{reduced} if
$\ell_{j+1}\ne\ell_j$ for $1\leq j<n$. Let $\Omega_n(\boldsymbol\ell)$ be the word
component consisting of the records in $\Omega_n$ with face word
$\boldsymbol\ell$. Every record in $\Omega_n$ has a reduced face word:
after a collision on face $\ell_j$ one has $v_j\cdot n_{\ell_j}>0$, so the
next incoming face cannot again be $\ell_j$. For a reduced face word
$\boldsymbol\ell$, let
\begin{equation*}
 Y_{\boldsymbol\ell}=S_1^{d-1}\times X_\alpha\times X_0^{n-1}
\end{equation*}
and write its elements as
$\mathbf y=(v_0,g_0,\ldots,g_{n-1})$. 
We construct a subset $D_{\boldsymbol\ell}\subset Y_{\boldsymbol\ell}$ recursively as follows.
Put $\alpha_0=\alpha$ and $\alpha_j=0$ for $j\geq1$, and define
\begin{equation}\label{eq:punctured-lattice}
 \Lambda_{\alpha_j}^{\circ}(g)=
 \begin{cases}
  \Lambda_0(g)\setminus\{0\},&\alpha_j=0,\\
  \Lambda_{\alpha_j}(g),&\alpha_j\ne0.
 \end{cases}
\end{equation}
Starting from $z_0=z_0(v_0)$, for
$j=0,\ldots,n-1$ consider the set
\begin{equation*} \bigl(\Lambda_{\alpha_j}^{\circ}(g_j)\widehat O_j(v_0)\bigr)\cap
 \bigl(\mathfrak Z_{v_j}(0,\infty)+z_j\bigr).
\end{equation*}
If this set has no unique point with least positive first coordinate, then we cannot complete the construction to obtain a point in $D_{\boldsymbol\ell}$. If it does, denote that point by $y_{j+1}$ and set
\begin{equation*} \xi_{j+1}=y_{j+1}\cdot e_1,
 \qquad b_{j+1}=\prperp(y_{j+1})-z_j.
\end{equation*}
If a grazing velocity, a face-cell boundary point, or a face different from $\ell_{j+1}$ occurs, the construction again fails. If none of the above occurs, put
$z_{j+1}=\mathcal Z(v_j,b_{j+1})$ and continue. Let
$D_{\boldsymbol\ell}\subset Y_{\boldsymbol\ell}$
be the set of inputs for which this construction succeeds at all $n$ stages, and let
\begin{equation*}
 \Theta_{\boldsymbol\ell}:D_{\boldsymbol\ell}
 \longrightarrow\Omega_n(\boldsymbol\ell)
\end{equation*}
be the resulting record map. For
$G\in C_b(\Omega_n(\boldsymbol\ell))$, define on the full input space
\begin{equation}\label{eq:word-observable}
 H_{\boldsymbol\ell}^{G}(\mathbf y)=
 \begin{cases}
  G(\Theta_{\boldsymbol\ell}(\mathbf y)),&\mathbf y\in D_{\boldsymbol\ell},\\
  0,&\mathbf y\notin D_{\boldsymbol\ell}.
 \end{cases}
\end{equation}

\begin{lemma}
\label{lem:record-map}
The set $D_{\boldsymbol\ell}$ and the map
$\Theta_{\boldsymbol\ell}$ are Borel. For every probability measure
$\lambda$ absolutely continuous with respect to $\omega$ and every
$G\in C_b(\Omega_n(\boldsymbol\ell))$, the function
$H_{\boldsymbol\ell}^{G}$ is continuous outside a set $E_{\boldsymbol\ell}$ of
$\nu$-measure zero, where $\nu=\lambda\otimes\mu_\alpha\otimes \mu_0^{\otimes (n-1)}$.
\end{lemma}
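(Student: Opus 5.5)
The plan is to verify Borel measurability stage by stage, and then to find a null exceptional set $E_{\boldsymbol\ell}$ off which every stage of the recursive construction is locally stable.

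\textbf{Borel structure.} Write the construction as a composition of $n$ stage maps. The $j$th stage map sends $(v_0,g_j,z_j)$ to the first point of $\Lambda^\circ_{\alpha_j}(g_j)\widehat O_j(v_0)$ in the tube $\mathfrak Z_{v_j}(0,\infty)+z_j$.
\begin{itemize}
\item Locally on $X_{\alpha_j}$ one can choose Borel representatives $M$, so the lattice points are indexed by $m\in\mathbb Z^d$ through the continuous maps $(m+\alpha_j)M$.
\item The events ``$y$ lies in the tube'', ``$y$ has least positive first coordinate'' and ``the minimiser is unique'' are then countable Boolean combinations of Borel conditions. The tube condition is Borel because $(v,b)\mapsto \mathbf 1\{b\in B_v\}$ is Borel, as $B_v$ depends continuously on $v$ off $v_{\rm s}$.
\item The maps $K$, $\face^-_v$, $\mathcal R$ and $\mathcal Z$ are Borel by construction.
\end{itemize}
Hence $D_{\boldsymbol\ell}$ is a countable intersection of Borel sets and $\Theta_{\boldsymbol\ell}$ is Borel.

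\textbf{Continuity.} Let $E_{\boldsymbol\ell}$ be the union of the following sets:
\begin{enumerate}
\item the set where $v_0=v_{\rm s}$, or $v_j$ equals the singular direction of $K$ for some $j$;
\item the set where some stage has a tie, meaning two lattice points sharing the least first coordinate;
\item the set where some lattice point lies on the boundary of the relevant tube (lateral boundary $\partial B_{v_j}+z_j$, or the end section at $0$);
\item the set where some $b_{j+1}$ lies on a face-cell boundary or produces a grazing or lower-dimensional impact;
\item the set where some stage has no first point.
\end{enumerate}
Off $E_{\boldsymbol\ell}$, suppose $\mathbf y\in D_{\boldsymbol\ell}$. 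Take a neighbourhood small enough that the finitely many lattice points in the truncated tube $\mathfrak Z_{v_j}(0,\xi_{j+1}+1)+z_j$ move continuously, and that no new point enters. This is possible because there are no boundary points and $B_v$ is Hausdorff continuous in $v$. The unique minimiser then persists and depends continuously on the inputs, and $b_{j+1}$ stays in the open cell $B_{v_j,\ell_{j+1}}$. On that cell $\mathcal R$ is locally constant and $\mathcal Z$ is continuous, being a projection of an affine map of the face. So $z_{j+1}$ and $v_{j+1}$ vary continuously. Induction over $j$ gives continuity of $\Theta_{\boldsymbol\ell}$, and hence of $H^G$, near $\mathbf y$.

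If instead $\mathbf y\notin D_{\boldsymbol\ell}\cup E_{\boldsymbol\ell}$, the construction must fail cleanly at some stage: the first point exists uniquely but hits a face other than $\ell_{j+1}$ in an open cell. The same stability shows this failure persists on a neighbourhood, where $H^G\equiv0$.

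\textbf{Nullity of $E_{\boldsymbol\ell}$.} This is the main obstacle, because the $z_j$ and $v_j$ feeding stage $j$ depend on the earlier $g_i$. The point is that $g_j$ is independent of $(v_0,g_0,\dots,g_{j-1})$ under $\nu$, so I would use Fubini: condition on $(v_0,g_0,\ldots,g_{j-1})$ and show the stage-$j$ bad set has $\mu_{\alpha_j}$-measure zero for every fixed $z_j$ and $v_j$.
\begin{itemize}
\item Boundary points are excluded by Siegel's formula, since the boundary of the tube has Lebesgue measure zero. Remark~\ref{rem:Phi-covariance} handles the rotation by $\widehat O_j$.
\item Ties are excluded by Lemma~\ref{lem:general-shadow}(i).
\item Infinite first points are excluded by Lemma~\ref{lem:general-shadow}(iii).
\item Cell-boundary and grazing impacts are excluded by \eqref{eq:first-point-law}: $b_{j+1}$ has a density on $B_{v_j}$, and cell boundaries and grazing sets are Lebesgue-null in $E$.
\item The singular directions are excluded as follows. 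For fixed $v_0$, the $v_j$ are determined by $v_0$ and $\boldsymbol\ell$, so condition (1) cuts out a $\lambda$-null set of $v_0$ because $\lambda\ll\omega$. Here $v_j=v_0P_j$ with $P_j$ orthogonal.
\end{itemize}
Integrating these null sets over the preceding variables gives $\nu(E_{\boldsymbol\ell})=0$.
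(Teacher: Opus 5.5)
Your proposal is correct and follows the paper's route. Like the paper, you list the exceptional configurations: chart singularity, ties, nonzero tube-boundary points, face-cell-boundary or grazing impacts, and a missing first point. You show each is $\nu$-null via Siegel's formula, Lemma~\ref{lem:general-shadow}, absolute continuity of the impact law and $\lambda\ll\omega$, then argue inductively that success or failure of the word is locally constant and the record varies continuously off this set. Your explicit Fubini step over the independent factor $g_j$, with $(v_j,z_j)$ fixed by the earlier stages, spells out what the paper leaves implicit without changing the argument.
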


\begin{proof}
The exceptional configurations are those with a nonzero
point on a tube boundary, two points at the least longitudinal coordinate, a
terminal point on a face-cell boundary, a grazing velocity, no finite first
coordinate, or $v_j=v_{\rm s}$ at some stage. Tube-boundary points are null
by Siegel's first-moment formula, ties by the two-point estimate in the proof
of Lemma~\ref{lem:general-shadow}, face-cell boundaries and grazing
velocities by the flux formula \eqref{eq:flux-in}, and the absence of a finite
first point by Lemma~\ref{lem:general-shadow}(i). The chart-singular inputs
are null because $\lambda$ is absolutely continuous. The distinguished zero
vector has already been deleted in \eqref{eq:punctured-lattice}. The defining
incidence, avoidance, and cell-membership conditions are Borel. Away from the exceptional
configurations, local finiteness and the stability estimate
\eqref{eq:first-point-stability} show inductively that success or failure of
the prescribed word is locally constant and that, on
$D_{\boldsymbol\ell}$, the extracted point and all subsequent collision data
vary continuously. This proves the asserted continuity of
$H_{\boldsymbol\ell}^{G}$.
\end{proof}

\begin{proposition}
\label{prop:record-map-law}
For every bounded Borel function
$G:\Omega_n(\boldsymbol\ell)\to\mathbb R$,
\begin{align}
 &\int_{D_{\boldsymbol\ell}}
 G(\Theta_{\boldsymbol\ell}(v_0,g_0,\ldots,g_{n-1}))
 \,d\lambda(v_0)\prod_{j=0}^{n-1}d\mu_{\alpha_j}(g_j)\notag\\
 &\quad=\int_{\Omega_n(\boldsymbol\ell)}G(\varpi)
 \Phi_{\alpha,v_0}(\xi_1,b_1,z_0(v_0))
 \prod_{j=1}^{n-1}\Phi_{0,v_j}(\xi_{j+1},b_{j+1},z_j)
 \,d\sigma_\lambda^{(n)}(\varpi).
 \label{eq:record-map-law}
\end{align}
\end{proposition}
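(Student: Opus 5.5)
The plan is to prove \eqref{eq:record-map-law} by induction on $n$, integrating out the lattice variables one at a time from the last one, $g_{n-1}$, backwards. The construction of $\Theta_{\boldsymbol\ell}$ is triangular. Stage $j$ uses only $g_j$ together with the data $(v_0,\xi_1,b_1,\ldots,\xi_j,b_j)$ produced by the earlier stages, and these data determine $v_j$, $z_j$ and $\widehat O_j(v_0)$. So Fubini applies: for fixed $(v_0,g_0,\ldots,g_{n-2})$ in the domain where the first $n-1$ stages succeed with face word $(\ell_1,\ldots,\ell_{n-1})$, the last stage is a first-point problem for the single random lattice $g_{n-1}\sim\mu_0$. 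By a monotone class argument it suffices to treat product test functions $G=G'(v_0,\xi_1,b_1,\ldots,\xi_{n-1},b_{n-1})\,G''(\xi_n,b_n)$. Both sides are bounded measures in $G$; Lemma~\ref{lem:record-map} ensures the left-hand side is well defined.

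\textbf{Single-stage identity.} Fix $v=v_j$, $z=z_j$, $O=O_j(v_0)$. The $j$th stage requires the first point of $\Lambda^\circ_{\alpha_j}(g)\widehat O\cap(\mathfrak Z_{v}(0,\infty)+z)$. Since $\widehat O=\operatorname{diag}(1,O)$ fixes $e_1$, this point equals $y\widehat O$, where $y$ is the first point of $\Lambda^\circ_{\alpha_j}(g)$ in the tube with cross-section $B_vO^{-1}$ shifted by $zO^{-1}$. Lemma~\ref{lem:general-shadow}(ii) gives the law of the tube-first point for this shifted tube. Deleting $0$ for $\alpha_j=0$ changes nothing, because $0$ has first coordinate $0$ and is not a positive point. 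Transporting by $O$ and using Remark~\ref{rem:Phi-covariance} (valid for $\det O=-1$ since $\alpha_j=0$ when $j\ge1$, while $O_0=I$) shows that the law of $(\xi_{j+1},b_{j+1})$ has density $\Phi_{\alpha_j,v_j}(\xi_{j+1},b_{j+1},z_j)$ with respect to $d\xi\,db$ on $\mathbb R_{>0}\times B_{v_j}$. Lebesgue measure is invariant under $O$, so $db$ is preserved.

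\textbf{Word constraint and the induction step.} Next I would restrict to the event that the face is $\ell_{j+1}$ and that no grazing or cell-boundary point occurs. Up to null sets, this event is exactly $b_{j+1}\in B_{v_j,\ell_{j+1}}$, so integrating the density over this cell and multiplying by $G''$ handles the last factor. Plugging this into the inductive hypothesis for the word $(\ell_1,\ldots,\ell_{n-1})$ then yields the product formula, with the measure $d\sigma^{(n)}_\lambda$ restricted to $\Omega_n(\boldsymbol\ell)$. The base case $n=1$ is the single-stage identity with $\alpha_0=\alpha$ and $z_0(v_0)$. Here, in the non-rational affine case, $\Phi$ is independent of $z$, consistent with the formula.

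\textbf{Main obstacle.} I expect the measurability and null-set bookkeeping to be the hardest part. Applying Lemma~\ref{lem:general-shadow} with a $z$ depending on earlier variables requires joint Borel measurability of $(v_0,g_0,\ldots,g_{j-1})\mapsto(v_j,z_j,O_j)$, which comes from Lemma~\ref{lem:record-map}, and of $\Phi$ in $(v,\xi,b,z)$, which comes from part (iv) and the remark after Remark~\ref{rem:Phi-covariance}. One must also check that the conditional failure sets (ties, tube boundary points, no finite point, $v_j=v_{\rm s}$) are null for each fixed earlier data. This holds because Lemma~\ref{lem:general-shadow}(i) holds for every $z\in E$. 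The exceptional velocity $v_j=v_{\rm s}$ is handled by noting that, for fixed $\boldsymbol\ell$, $v_0\mapsto v_0P_j$ is a diffeomorphism, so it is $\lambda$-null.
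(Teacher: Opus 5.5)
Your proposal is correct and follows the paper's proof. The paper likewise uses Haar invariance and the covariance identity \eqref{eq:Phi-covariance} to reduce each stage to the single-tube first-point law of Lemma~\ref{lem:general-shadow}, with conditional density $\Phi_{\alpha_j,v_j}(\xi_{j+1},b_{j+1},z_j)\,d\xi_{j+1}\,db_{j+1}$, and then assembles the product by successive conditioning and a monotone-class argument; your backward Fubini induction over $g_{n-1},\ldots,g_0$ is an explicit version of that successive conditioning.
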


\begin{proof}
Haar invariance and \eqref{eq:Phi-covariance} reduce each factor to
Lemma~\ref{lem:general-shadow}, whose conditional density is
\begin{equation*}
 \Phi_{\alpha_j,v_j}(\xi_{j+1},b_{j+1},z_j)\,
 d\xi_{j+1}\,db_{j+1}.
\end{equation*}
Successive conditioning and \eqref{eq:positive-bin-disintegration} give
\eqref{eq:record-map-law}; the bounded Borel case follows by a monotone-class
argument.
\end{proof}

Define
\begin{equation}\label{eq:F-n}
 \mathcal F_{\alpha,\beta}^{(n)}(\varpi)=
 \Phi_{\alpha,v_0}(\xi_1,b_1,z_0(v_0))
 \prod_{j=1}^{n-1}
 \Phi_{0,v_j}(\xi_{j+1},b_{j+1},z_j).
\end{equation}
By \eqref{eq:Phi-normalisation}, these are probability densities, and they
are consistent since
\begin{equation*} \int_0^\infty\int_{B_{v_n}}
 \mathcal F_{\alpha,\beta}^{(n+1)}\,db_{n+1}\,d\xi_{n+1}
 =\mathcal F_{\alpha,\beta}^{(n)}.
\end{equation*}
The Ionescu--Tulcea theorem therefore yields the corresponding law on infinite records.

Now put
$t=\log(1/\rho)$ and
\begin{equation*}
 \begin{aligned}
 g_{j,\rho}(v_0)
 &=\psi_{\alpha_j}\bigl(\widetilde M_jK(v_0)A^t\bigr),\\
 \iota_{\boldsymbol\ell,\rho}(v_0)
 &=\bigl(v_0,g_{0,\rho}(v_0),\ldots,g_{n-1,\rho}(v_0)\bigr)
 \in Y_{\boldsymbol\ell}.
 \end{aligned}
\end{equation*}

We construct $D_{\boldsymbol\ell,\rho}\subset Y_{\boldsymbol\ell}$ recursively in the same way as $D_{\boldsymbol\ell}$ above. We now use the point sets
$\Lambda_{\alpha_j}^{\circ}(g_j)\widehat O_j(v_0)$ and set
$x_0=\beta(v_0)$. For $j=0,\ldots,n-1$, put $x_j=x^-_{v_{j-1}}(b_j)$ when $j\geq1$, and in factor $j$ use
the collision region
\begin{align}
 \mathfrak C_{\rho,j}(\xi,z_j)
 &=\bigl\{[s+\rho^d(x_j-p)\cdot v_j]e_1+z_j
 -\prperp(pK(v_j)):\notag\\[-1mm]
 &\hspace{42mm}0<s<\xi,\ p\in\mathcal P\bigr\}.
 \label{eq:finite-rho-collision-stage}
\end{align}
The first physical contact time is
\begin{equation*}
 \xi_{j+1}=\inf\bigl\{\xi>0:
 (\Lambda_{\alpha_j}^{\circ}(g_j)\widehat O_j(v_0))
 \cap\mathfrak C_{\rho,j}(\xi,z_j)\ne\varnothing\bigr\}.
\end{equation*}
If a unique centre $y_{j+1}$ belongs to
$\mathfrak C_{\rho,j}(\eta,z_j)$ for every $\eta>\xi_{j+1}$ sufficiently
close to $\xi_{j+1}$ and gives a regular collision on the prescribed
face, put
\begin{equation*}
 b_{j+1}=\prperp(y_{j+1})-z_j,\qquad
 x_{j+1}=x^-_{v_j}(b_{j+1}),\qquad
 z_{j+1}=\mathcal Z(v_j,b_{j+1}),
\end{equation*}
and continue. The inputs for which all $n$
stages succeed form $D_{\boldsymbol\ell,\rho}$. This set and the resulting
map are Borel by the same incidence-and-avoidance argument as in
Lemma~\ref{lem:record-map}.
Let
\begin{equation*}
 \Theta_{\boldsymbol\ell,\rho}:D_{\boldsymbol\ell,\rho}
 \longrightarrow\Omega_n(\boldsymbol\ell)
\end{equation*}
be the resulting record map.

For fixed $n$, let $\mathcal A_{n,\rho}$ be the event that the
first $n$ collisions exist and are regular. On this event define the physical
record
\begin{equation*}
 \varpi_n^{(\rho)}=
 \bigl(v_0,\xi_1^{(\rho)},b_1^{(\rho)},\ldots,
       \xi_n^{(\rho)},b_n^{(\rho)}\bigr)\in\Omega_n.
\end{equation*}
The following theorem identifies the limit distribution of
this record; taking $G\equiv1$ also implies that
$\lambda(\mathcal A_{n,\rho}^{c})\to0$.
 
\begin{theorem}\label{thm:impact-micro}
Let $d\geq3$, let $\mathcal P$ be $\mathscr L$-nonresonant,
and let $q_{\rho,\beta}(v)$ and $\lambda$ be as in
Theorem~\ref{thm:one-flight}.
Then, for every
$G\in C_b(\Omega_n)$,
\begin{equation}\label{eq:impact-convergence}
 \lim_{\rho\to0}\int_{\mathcal A_{n,\rho}}
 G\bigl(\varpi_n^{(\rho)}\bigr)\,d\lambda(v_0)
 =\int_{\Omega_n}G(\varpi)\,
 \mathcal F_{\alpha,\beta}^{(n)}(\varpi)\,
 d\sigma_\lambda^{(n)}(\varpi).
\end{equation}
\end{theorem}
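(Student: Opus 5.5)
The plan is to reduce Theorem~\ref{thm:impact-micro} to Theorem~\ref{thm:product-equi} applied to the matrices $\widetilde M_0,\ldots,\widetilde M_{n-1}$, combined with the continuity of the record map (Lemma~\ref{lem:record-map}) and its limiting law (Proposition~\ref{prop:record-map-law}).

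\textbf{Step 1: decompose by face word and identify the physical record.} Since $\Omega_n$ is a disjoint union over the finitely many reduced words of length $n$ (at most $s^n$), it suffices to treat $G$ supported on a single component $\Omega_n(\boldsymbol\ell)$. The first thing to verify is the exact identity
\begin{equation*}
 \mathcal A_{n,\rho}\cap\{\text{word }\boldsymbol\ell\}=\{v_0:\iota_{\boldsymbol\ell,\rho}(v_0)\in D_{\boldsymbol\ell,\rho}\},
 \qquad
 \varpi_n^{(\rho)}=\Theta_{\boldsymbol\ell,\rho}(\iota_{\boldsymbol\ell,\rho}(v_0)).
\end{equation*}
This follows by unfolding. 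After the $j$th collision at $m_j$, the remaining centres $\mathscr L-m_j=\mathscr L$ seen in the frame $K(v_j)D(\rho)$ are $\Lambda^\circ_0(g_{j,\rho})\widehat O_j(v_0)$, and the source scatterer is removed by the puncture. The stagewise version of \eqref{eq:terminal-centre-finite-rho} then shows that the exact collision set is $\mathfrak C_{\rho,j}$, with starting point $x_j$ on the previous face. Consequently
\begin{equation*}
 \int_{\mathcal A_{n,\rho}}G(\varpi_n^{(\rho)})\,d\lambda(v_0)=\int H^{G}_{\boldsymbol\ell,\rho}\bigl(\iota_{\boldsymbol\ell,\rho}(v_0)\bigr)\,d\lambda(v_0),
\end{equation*}
where $H^{G}_{\boldsymbol\ell,\rho}$ is defined as in \eqref{eq:word-observable} using $D_{\boldsymbol\ell,\rho}$ and $\Theta_{\boldsymbol\ell,\rho}$.

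\textbf{Step 2: equidistribution.} By Lemma~\ref{lem:pairwise}, for distinct prefixes $u=\ell_1\cdots\ell_j$ the matrices $\widetilde M_j$ are pairwise incommensurable. These matrices equal $J^jM_0P_j^{-1}$, which are $M^\circ$-type bases for the reversed words; reversal preserves reducedness and distinctness, and $\mathscr L$-nonresonance is invariant under inversion of $R_w$. Theorem~\ref{thm:product-equi} then gives
\begin{equation*}
 (\iota_{\boldsymbol\ell,\rho})_*\lambda\;\longrightarrow\;\nu=\lambda\otimes\mu_\alpha\otimes\mu_0^{\otimes(n-1)},
\end{equation*}
including the $v_0$-coordinate. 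If $H^G_{\boldsymbol\ell,\rho}$ were replaced by the $\rho$-independent $H^G_{\boldsymbol\ell}$, Lemma~\ref{lem:record-map} and the portmanteau theorem for functions continuous $\nu$-almost everywhere would give convergence to $\int H^G_{\boldsymbol\ell}\,d\nu$. Proposition~\ref{prop:record-map-law} identifies this integral with the right side of \eqref{eq:impact-convergence} on the component $\Omega_n(\boldsymbol\ell)$.

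\textbf{Step 3: compare the finite-$\rho$ map with the limit map (main obstacle).} I expect this comparison to be the hardest step. Unlike in Theorem~\ref{thm:one-flight}, the error is no longer a single perturbation: the sets $\mathfrak C_{\rho,j}$ depend on the previously extracted $b_j$, so errors propagate through the stages. I would first impose cutoffs on $G$: $\xi_j\in[\varepsilon,T]$, plus a compact subset bounded away from face-cell boundaries and grazing. Then I would use the stagewise sandwich \eqref{eq:tube-sandwich} with constant $C\rho^d$, which is uniform because $x_j\in\overline{\mathcal P}$ is bounded. On a neighbourhood of a point of $D_{\boldsymbol\ell}\setminus E_{\boldsymbol\ell}$, the two constructions agree up to errors in $\xi$ and $b$ that tend to $0$ with $\rho$, uniformly on compacta. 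I would argue this inductively, using local finiteness and the fact that the limit configuration has no point within distance $\delta$ of the tube boundaries or of ties.

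This yields $H^G_{\boldsymbol\ell,\rho}\to H^G_{\boldsymbol\ell}$ uniformly on compact sets avoiding a $\delta$-neighbourhood of the exceptional set. The $\nu$-mass of that neighbourhood is small by the Siegel and two-point estimates together with \eqref{eq:first-point-stability}, and weak convergence lets me transfer this smallness to the pushforward measures. Finally, I would remove the cutoffs exactly as in Theorem~\ref{thm:one-flight}. Short flights contribute $O(\varepsilon)$ per stage, long flights are handled uniformly by Lemma~\ref{lem:shadow-stability}(iii), and the normalisation of $\mathcal F^{(n)}_{\alpha,\beta}$ forces the limiting mass of the truncated regular event to tend to one. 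This also gives $\lambda(\mathcal A_{n,\rho}^c)\to0$.
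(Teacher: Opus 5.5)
Your proposal is correct and follows the paper's proof essentially step by step. Both arguments decompose into reduced face words, identify the physical record with $\Theta_{\boldsymbol\ell,\rho}(\iota_{\boldsymbol\ell,\rho}(v_0))$, and get product equidistribution from Lemma~\ref{lem:pairwise} (with $\widetilde M_j=M^\circ_{u_j}$, $u_j=\ell_j\cdots\ell_1$) and Theorem~\ref{thm:product-equi}. They then compare stage by stage using \eqref{eq:tube-sandwich} and \eqref{eq:first-point-stability}, identify the limit by Proposition~\ref{prop:record-map-law}, and sum over words with $G\equiv1$ to obtain $\lambda(\mathcal A_{n,\rho}^c)\to0$.

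The only difference is packaging. The paper states your Step~3 as continuous convergence $H^G_{\boldsymbol\ell,\rho_k}(\mathbf y_k)\to H^G_{\boldsymbol\ell}(\mathbf y)$ whenever $\rho_k\to0$ and $\mathbf y_k\to\mathbf y\notin E_{\boldsymbol\ell}$, and then applies the extended continuous mapping theorem, so your cutoffs and $\delta$-neighbourhood bookkeeping are not needed.
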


\begin{proof}
Fix a reduced face word
$\boldsymbol\ell=(\ell_1,\ldots,\ell_n)$, which records the faces in
chronological order. Set $u_0=e$; after $j$ collisions, the corresponding
unfolded group word is $u_j=\ell_j\cdots\ell_1$ for $1\leq j\leq n-1$.
Since $\boldsymbol\ell$ is reduced, the words $u_0,\ldots,u_{n-1}$ are
distinct, and $u_j$ is reduced for every $1\leq j\leq n-1$. Since
$\widetilde M_j=M_{u_j}^{\circ}$, Lemma~\ref{lem:pairwise} says that the
$\widetilde M_j$ are pairwise incommensurable. Let
$\nu=\lambda\otimes\mu_\alpha\otimes\mu_0^{\otimes(n-1)}$ and
$\nu_{\boldsymbol\ell,\rho}
=(\iota_{\boldsymbol\ell,\rho})_*\lambda$.
By Theorem~\ref{thm:product-equi}, we have the weak convergence
\begin{equation}\label{eq:product-orbit-weak}
 \nu_{\boldsymbol\ell,\rho}\weakto\nu
 \qquad\text{on }Y_{\boldsymbol\ell}.
\end{equation}
For $j=0$, \eqref{eq:finite-rho-collision-stage} is
\eqref{eq:finite-rho-collision-set}.  For $j\geq1$, we have
\begin{equation*} (m_{j+1}-m_j)K(v_j)D(\rho)
 =\bigl[\xi_{j+1}+\rho^d(x_j-x_{j+1})\cdot v_j\bigr]e_1
  +b_{j+1}+z_j.
\end{equation*}
Moreover,
$\iota_{\boldsymbol\ell,\rho}(v_0)\in D_{\boldsymbol\ell,\rho}$ precisely when the first $n$
collisions exist, are regular, and have face word $\boldsymbol\ell$; on this set $\Theta_{\boldsymbol\ell,\rho}
(\iota_{\boldsymbol\ell,\rho}(v_0))$ is the physical record for each $\rho>0$.

For $G\in C_b(\Omega_n(\boldsymbol\ell))$, define
\begin{equation*}
 H_{\boldsymbol\ell,\rho}^{G}(\mathbf y)=
 \begin{cases}
  G(\Theta_{\boldsymbol\ell,\rho}(\mathbf y)),
      &\mathbf y\in D_{\boldsymbol\ell,\rho},\\
  0,&\mathbf y\notin D_{\boldsymbol\ell,\rho}.
 \end{cases}
\end{equation*}
The tube sandwich \eqref{eq:tube-sandwich}, first-point stability
\eqref{eq:first-point-stability}, and the $O(\rho^d)$ correction in
\eqref{eq:finite-rho-collision-stage} show inductively that, if
$\rho_k\to0$, $\mathbf y_k\to\mathbf y$, and
$\mathbf y\notin E_{\boldsymbol\ell}$ (where $E_{\boldsymbol\ell}$ is the $\nu$-null set defined in Lemma~\ref{lem:record-map}), then $\mathbf 1_{D_{\boldsymbol\ell,\rho_k}}(\mathbf y_k)
 =\mathbf 1_{D_{\boldsymbol\ell}}(\mathbf y)$ for all sufficiently large $k$, and, when $\mathbf y\in D_{\boldsymbol\ell}$,
\begin{equation*}
 \Theta_{\boldsymbol\ell,\rho_k}(\mathbf y_k) \longrightarrow\Theta_{\boldsymbol\ell}(\mathbf y).
\end{equation*}
In particular, this implies
\begin{equation}\label{eq:varying-record-map}
 H_{\boldsymbol\ell,\rho_k}^{G}(\mathbf y_k)
 \longrightarrow H_{\boldsymbol\ell}^{G}(\mathbf y).
\end{equation}

Since $\lVert H_{\boldsymbol\ell,\rho}^{G}\rVert_\infty\leq\lVert G\rVert_\infty$,
the extended continuous mapping theorem for varying
functions, applied to \eqref{eq:product-orbit-weak} and
\eqref{eq:varying-record-map}, now gives
\begin{align}
 \lim_{\rho\to0}
 \int H_{\boldsymbol\ell,\rho}^{G}
       (\iota_{\boldsymbol\ell,\rho}(v_0))\,d\lambda(v_0)\notag
 &=
 \int_{Y_{\boldsymbol\ell}}
 H_{\boldsymbol\ell}^{G}(\mathbf y)\,d\nu(\mathbf y)\notag\\
 &=
 \int_{\Omega_n(\boldsymbol\ell)}
 G(\varpi)\mathcal F_{\alpha,\beta}^{(n)}(\varpi)
 \,d\sigma_\lambda^{(n)}(\varpi),
 \label{eq:fixed-word-limit}
\end{align}
where the last identity is Proposition~\ref{prop:record-map-law}.

First take $G\equiv1$ in \eqref{eq:fixed-word-limit} and sum
over the finitely many reduced face words.
The successful finite-$\rho$ events associated with these
words partition $\mathcal A_{n,\rho}$, while
\eqref{eq:Phi-normalisation} makes the limiting word probabilities sum to
one. Hence
$\lambda(\mathcal A_{n,\rho}^{c})\to0$.
For general $G\in C_b(\Omega_n)$, apply
\eqref{eq:fixed-word-limit} to the restriction of $G$ to
each word component $\Omega_n(\boldsymbol\ell)$ and sum over the finitely
many reduced face words. This gives
\eqref{eq:impact-convergence}.
\end{proof}

\begin{remark}
In principle, it should be possible to prove an analogue of Theorem~\ref{thm:impact-micro} for polyhedra that are not $\mathscr L$-nonresonant, in any dimension $d\geq2$. As noted in Remark~\ref{rem:nonproduct}, the limiting measure would then no longer be a product measure, leading to persistent memory effects in the limiting random flight process. In the related setting of periodic slit barriers, this was carried out in \cite[Theorems~3(1) and~9]{BachurinKhaninMarklofPlakhov2011}.
\end{remark}

\section{Velocity and segment laws}
\label{sec:velocity}

For $z\in E$, define
\begin{equation*} a_{\alpha,\ell}(v,z;\xi)=
 \int_{B_{v,\ell}}\Phi_{\alpha,v}(\xi,b,z)\,db.
\end{equation*}
In view of \eqref{eq:flux-in},
\begin{equation*} a_{\alpha,\ell}(v,z;\xi)=
 \begin{cases}
 \displaystyle\int_{F_\ell}
 \Phi_{\alpha,v}\bigl(\xi,-\prperp(xK(v)),z\bigr)
 (-v\cdot n_\ell)\,dA_\ell(x),&v\cdot n_\ell<0,\\[2ex]
 0,&v\cdot n_\ell\geq0.
 \end{cases}
\end{equation*}
The one-flight velocity kernel is the probability kernel
\begin{equation}\label{eq:atomic-kernel}
 \mathsf p_{\alpha,z}(v;d\xi,du)=
 d\xi\sum_{\ell:v\cdot n_\ell<0}
 a_{\alpha,\ell}(v,z;\xi)\,\delta_{vR_\ell}(du).
\end{equation}
In distribution notation,
\begin{equation*} p_{\alpha,z}(v,\xi,u)=
 \sum_{\ell:v\cdot n_\ell<0}
 a_{\alpha,\ell}(v,z;\xi)\,\delta_{vR_\ell}(u).
\end{equation*}
Write $(a)_+=\max\{a,0\}$ and put
$|F_\ell|=\int_{F_\ell}dA_\ell$.  The geometric differential scattering cross
section is the finite measure
\begin{equation*} \Sigma_v(du)=
 \sum_{\ell=1}^s|F_\ell|(-v\cdot n_\ell)_+
 \delta_{vR_\ell}(du).
\end{equation*}
Boundary strata and grazing faces are flux-null.  

Let
\begin{equation*} \begin{aligned}
& \mathcal V_n :\Omega_n\longrightarrow
 S_1^{d-1}\times(\mathbb R_{>0}\times S_1^{d-1})^n,\\
 & \mathcal V_n(\varpi) =(v_0,\xi_1,v_1,\ldots,\xi_n,v_n),
 \end{aligned}
\end{equation*}
where the velocities are generated by \eqref{eq:Omega-recursion}.
The flight-length/velocity limit is the Borel probability
measure
\begin{equation}\label{eq:velocity-pushforward}
\widehat{\mathbb P}_{\alpha,\beta}^{(n)}
 =(\mathcal V_n)_*
 \bigl(\mathcal F_{\alpha,\beta}^{(n)}\sigma_\lambda^{(n)}\bigr).
\end{equation}
Define
\begin{equation*} \mathcal S_n:\Omega_n\longrightarrow
 (\mathbb R^d\setminus\{0\})^n,\qquad
 \mathcal S_n(\varpi)=(\xi_1v_0,\xi_2v_1,\ldots,\xi_nv_{n-1}),
\end{equation*}
and the limiting segment law is
\begin{equation}\label{eq:segment-pushforward}
 \mathbb P_{\alpha,\beta}^{(n)}
 =(\mathcal S_n)_*
 \bigl(\mathcal F_{\alpha,\beta}^{(n)}\sigma_\lambda^{(n)}\bigr).
\end{equation}
Thus, for a bounded continuous test function $G$,
\begin{equation*} \int G\,d\mathbb P_{\alpha,\beta}^{(n)}
 =\int_{\Omega_n}
 G(\xi_1v_0,\ldots,\xi_nv_{n-1})
 \mathcal F_{\alpha,\beta}^{(n)}\,d\sigma_\lambda^{(n)}.
\end{equation*}

On $\mathcal A_{n,\rho}$, the images
$\mathcal V_n(\varpi_n^{(\rho)})$ and
$\mathcal S_n(\varpi_n^{(\rho)})$ are precisely the physical
flight-length/velocity and rescaled-segment tuples.

\begin{corollary}\label{cor:pushforward}
Under the assumptions of Theorem~\ref{thm:impact-micro}, for every
\begin{equation*}
 G\in C_b\!\left(S_1^{d-1}\times
 (\mathbb R_{>0}\times S_1^{d-1})^n\right),\qquad
 H\in C_b\!\left((\mathbb R^d\setminus\{0\})^n\right),
\end{equation*}
we have
\begin{align*}
 \lim_{\rho\to0}\int_{\mathcal A_{n,\rho}}
 G\bigl(\mathcal V_n(\varpi_n^{(\rho)})\bigr)\,d\lambda(v_0)
 &=\int G\,d\widehat{\mathbb P}_{\alpha,\beta}^{(n)},\\
 \lim_{\rho\to0}\int_{\mathcal A_{n,\rho}}
 H\bigl(\mathcal S_n(\varpi_n^{(\rho)})\bigr)\,d\lambda(v_0)
 &=\int H\,d\mathbb P_{\alpha,\beta}^{(n)}.
\end{align*}
\end{corollary}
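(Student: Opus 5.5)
The corollary follows from Theorem~\ref{thm:impact-micro} by the continuous mapping principle. The plan is to set $\widetilde G=G\circ\mathcal V_n$ and $\widetilde H=H\circ\mathcal S_n$, check that these belong to $C_b(\Omega_n)$, and then apply \eqref{eq:impact-convergence} with $\widetilde G$ and $\widetilde H$ as test functions. Once that is done, the definitions \eqref{eq:velocity-pushforward} and \eqref{eq:segment-pushforward} of the pushforward measures identify the limits as $\int G\,d\widehat{\mathbb P}_{\alpha,\beta}^{(n)}$ and $\int H\,d\mathbb P_{\alpha,\beta}^{(n)}$. On $\mathcal A_{n,\rho}$ the left-hand sides agree by construction, since $\mathcal V_n(\varpi_n^{(\rho)})$ and $\mathcal S_n(\varpi_n^{(\rho)})$ are exactly the physical tuples.

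Boundedness is immediate. The substantive point is continuity of $\mathcal V_n$ and $\mathcal S_n$ on $\Omega_n$, and here the disjoint-union topology over face words does the work. On a fixed word component $\Omega_n(\boldsymbol\ell)$, the velocities are
\begin{equation*}
 v_j=v_0R_{\ell_1}\cdots R_{\ell_j},
\end{equation*}
which is a fixed linear map applied to $v_0$. Hence $\mathcal V_n$ restricted to $\Omega_n(\boldsymbol\ell)$ is $(v_0,\xi_1,b_1,\ldots)\mapsto(v_0,\xi_1,v_0P_1,\ldots,\xi_n,v_0P_n)$, which is continuous. Likewise $\mathcal S_n$ has components $\xi_jv_0P_{j-1}$, which are continuous and nonzero because $\xi_j>0$. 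Continuity on each of the finitely many open components gives continuity on $\Omega_n$.

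If one prefers to view $\Omega_n$ with the subspace topology from the ambient product instead, the velocity map is discontinuous across face-cell boundaries. In that case I would fall back on the portmanteau form of the continuous mapping theorem. This requires the discontinuity set of $\widetilde G$ and $\widetilde H$, which lies in the union of face-cell boundaries and grazing sets, to be null for the limit measure $\mathcal F_{\alpha,\beta}^{(n)}\sigma_\lambda^{(n)}$. That holds because these sets are Lebesgue-null in each $b_j$ by the flux formula \eqref{eq:flux-in}, and the limit measure is absolutely continuous with respect to $\sigma_\lambda^{(n)}$.

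I expect this topological bookkeeping to be the only real obstacle, and with the paper's chosen disjoint-union topology it disappears. Everything else is a direct substitution into Theorem~\ref{thm:impact-micro}.
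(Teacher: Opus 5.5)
Your proposal is correct and matches the paper's proof: continuity of $\mathcal V_n$ and $\mathcal S_n$ in the disjoint-word topology puts $G\circ\mathcal V_n$ and $H\circ\mathcal S_n$ in $C_b(\Omega_n)$, and Theorem~\ref{thm:impact-micro} together with the pushforward definitions then gives the claim. Your formula $v_j=v_0P_j$ on each word component just spells out the continuity the paper asserts, and the subspace-topology fallback is not needed.
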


\begin{proof}
The maps $\mathcal V_n$ and $\mathcal S_n$ are continuous
in the disjoint-word topology of $\Omega_n$. Hence
$G\circ\mathcal V_n$ and $H\circ\mathcal S_n$ belong to $C_b(\Omega_n)$, and the assertions
follow from Theorem~\ref{thm:impact-micro} and the definitions
\eqref{eq:velocity-pushforward} and \eqref{eq:segment-pushforward}.
\end{proof}

For a fixed face word and $n\geq2$, the segment map factors through the
embedding
\begin{equation*}
 (v_0,\xi_1,\ldots,\xi_n)\longmapsto
 (\xi_1v_0,\ldots,\xi_nv_{n-1})
\end{equation*}
of a $(d-1)+n$-dimensional manifold into an
$nd$-dimensional space: the
first segment recovers $(v_0,\xi_1)$, and the word determines the later
velocities.  Its image has codimension $(n-1)(d-1)$ and is Lebesgue null.
Since there are
finitely many words, \eqref{eq:segment-pushforward} is singular for $n\geq2$.

Since the next-flight kernel generally depends on the departure parameter
$z_j$ as well as on $v_j$, the velocity projection alone need not be Markov,
although \eqref{eq:velocity-pushforward} still determines all its
finite-dimensional distributions.

\section{Macroscopic limit and continuous-time flight}\label{sec:macro}

Put $\varepsilon=\rho^{d-1}$ and
\begin{equation*} \mathcal Y_\rho=T^1(\varepsilon\mathcal K_\rho),
\end{equation*}
where $T^1$ denotes the completed billiard phase space defined in
Section~\ref{sec:geometry}.  Away from its boundary it is
$(\varepsilon\mathcal K_\rho^{\circ})\times S_1^{d-1}$.
For $(Q,V)\in\mathcal Y_\rho$ whose microscopic trajectory
is defined up to time $\rho^{-(d-1)}t$, set
\begin{equation}\label{eq:macro-flow}
 F_{t,\rho}(Q,V)=
 \left(\rho^{d-1}q(\rho^{-(d-1)}t),
 v(\rho^{-(d-1)}t)\right).
\end{equation}
Here $(q(s),v(s))$ is the microscopic billiard trajectory
with $q(0)=\rho^{-(d-1)}Q$ and $v(0)=V$. Extend $F_{t,\rho}$ in any fixed
measurable fashion on the Liouville-null set where that trajectory has
terminated at a nonregular collision before this time.
Let
\begin{equation} \mu_{\rm in}(dQ,dV)=f_{\rm in}(Q,V)\,dQ\,d\omega(V)
\label{eq:muin}
\end{equation}
be an absolutely continuous probability measure.  Since the obstacle volume
fraction is $\rho^d\vol(\mathcal P)$, we have
$\mu_{\rm in}(\mathcal Y_\rho^c)\to0$.  Averaging over $Q$ puts the first
affine-lattice factor in the universal regime.  Define
the macroscopic impact-resolved measure on
$\mathbb R^d\times\Omega_n$ by the density below, writing $Q_0$ for the
initial macroscopic position:
\begin{equation}\label{eq:macro-density}
 f_{\rm in}(Q_0,v_0)\Phi_{v_0}(\xi_1,b_1)
 \prod_{j=1}^{n-1}
 \Phi_{0,v_j}(\xi_{j+1},b_{j+1},z_j)
\end{equation}
with respect to
\begin{equation*} dQ_0\,d\omega(v_0)\prod_{j=1}^n d\xi_j\,db_j.
\end{equation*}

For fixed $n$, let $\mathcal A_{n,\rho}^{\rm mac}\subset\mathcal Y_\rho$ be the set of initial
points for which the first $n$ collisions exist and are regular. On this set,
let $\mathcal R_{n,\rho}(Q,V)$ be the impact-resolved tuple
\begin{equation*}
 (V,\xi_1^{(\rho)},b_1^{(\rho)},\ldots,
 \xi_n^{(\rho)},b_n^{(\rho)})\in\Omega_n.
\end{equation*}
We will see below that $\mu_{\rm in}(\mathcal Y_\rho\setminus
 \mathcal A_{n,\rho}^{\rm mac})=0$ for every sufficiently small $\rho>0$.  
 
\begin{theorem}\label{thm:macro-impact}
Let $d\geq3$, let $\mathcal P$ be
$\mathscr L$-nonresonant, and let $\mu_{\rm in}$ be as in
\eqref{eq:muin}. Then, for every $G\in C_b(\mathbb R^d\times\Omega_n)$,
\begin{align}
 &\lim_{\rho\to0}\int_{\mathcal A_{n,\rho}^{\rm mac}}
 G\bigl(Q,\mathcal R_{n,\rho}(Q,V)\bigr)
 f_{\rm in}(Q,V)\,dQ\,d\omega(V)\notag\\
 &\quad=\int_{\mathbb R^d\times\Omega_n}G(Q_0,\varpi)
 f_{\rm in}(Q_0,v_0)\Phi_{v_0}(\xi_1,b_1) \\ 
 & \qquad \times
 \prod_{j=1}^{n-1}\Phi_{0,v_j}(\xi_{j+1},b_{j+1},z_j)
 \,dQ_0\,d\omega(v_0)\prod_{j=1}^n d\xi_j\,db_j.
 \label{eq:macro-test-limit}
\end{align}
\end{theorem}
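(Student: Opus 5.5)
The plan is to reduce Theorem~\ref{thm:macro-impact} to Theorem~\ref{thm:impact-micro} by changing variables from the macroscopic initial position to a microscopic base point, and then averaging over the position. Write $Q=\rho^{d-1}q$ with $q=\rho^{-(d-1)}Q$. For fixed macroscopic $Q$, the microscopic base point is $q_\star=\rho^{-(d-1)}Q$, which moves with $\rho$. Its class modulo $\mathscr L$ is what enters the first affine factor.

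\textbf{Step 1: reduce to nice test functions.} By density and the uniform bound $\lvert G\rvert\le\lVert G\rVert_\infty$, we may assume the following:
\begin{itemize}
\item $f_{\rm in}$ is continuous with compact support;
\item $G(Q,\varpi)=G_1(Q)\,G_2(\varpi)$, with $G_1\in C_c(\mathbb R^d)$ and $G_2\in C_b(\Omega_n)$.
\end{itemize}
General $G$ then follows by a Stone--Weierstrass argument on compacts, together with the tightness provided by the normalisation \eqref{eq:Phi-normalisation} of the limit density.

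\textbf{Step 2: the first factor becomes a random affine lattice.} Replacing $q_\star$ by $q_\star+\rho\beta$ changes nothing at macroscopic scale, so we may take $\beta=0$; admissibility is irrelevant since generic $q_\star\notin\mathscr L$. The transformed centre sets are then
\begin{equation*}
(\mathscr L-\rho^{-(d-1)}Q)K(v)D(\rho)
=\bigl(\mathbb Z^d-\rho^{-(d-1)}QM_0^{-1}\bigr)M_0K(v)D(\rho).
\end{equation*}
As $Q$ ranges over a small ball, the translation $\alpha=-\rho^{-(d-1)}QM_0^{-1}$ equidistributes in $\mathbb R^d/\mathbb Z^d$ as $\rho\to0$, uniformly at the scale of $f_{\rm in}$. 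I would therefore replace the joint variable $(Q,v_0)$ by $(Q,v_0,\alpha)$, with $\alpha$ uniform on the torus and independent of $Q$ in the limit. The first factor is then $\ASL(d,\mathbb Z)(I,\alpha)(M_0K(v_0)A^t,0)$ with random $\alpha$, which is the standard setting of \cite[Theorem~5.3 and Section~6]{MarklofStrom2010}.

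\textbf{Step 3: product equidistribution with a random translation.} We need weak convergence of
\begin{equation*}
\bigl(Q,\;v_0,\;\ASL(d,\mathbb Z)(I,\alpha)(M_0K(v_0)A^t,0),\;\psi_0(\widetilde M_jK(v_0)A^t)_{j\ge1}\bigr)
\end{equation*}
to $\mu_{\rm in}\otimes\mu\otimes\mu_0^{\otimes(n-1)}$, where $\mu$ is Haar measure on $X$. I would prove this from Theorem~\ref{thm:product-equi} as follows.
\begin{itemize}
\item Take test functions on the product space and integrate first over $\alpha$.
\item The integral over $\alpha$ of a function on $X$ projects to a function on $X_0=\SL(d,\mathbb Z)\backslash\SL(d,\mathbb R)$.
\item The $X_0$-components are covered by Theorem~\ref{thm:product-equi} with all translations zero.
\item The pairwise incommensurability of the $\widetilde M_j$ comes from Lemma~\ref{lem:pairwise} exactly as in the proof of Theorem~\ref{thm:impact-micro}.
\item Haar measure on $X$ is the torus-fibre average over Haar measure on $X_0$.
\end{itemize}

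\textbf{Step 4: continuous mapping.} Apply the continuous mapping argument of Theorem~\ref{thm:impact-micro}, namely Lemma~\ref{lem:record-map} and \eqref{eq:varying-record-map}, word by word. The limit is identified by Proposition~\ref{prop:record-map-law}, with the first kernel the irrational one, $\Phi_{v_0}(\xi_1,b_1)$, which is independent of $z_0$. Taking $G\equiv1$ gives $\mu_{\rm in}(\mathcal Y_\rho\setminus\mathcal A_{n,\rho}^{\rm mac})\to0$. The stronger claim that this set is null follows because nonregular trajectories form a Liouville-null set and infinite free paths are null for lattice obstacles.

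\textbf{Main obstacle.} I expect the main difficulty in Step 2: the translation is tied to $Q$ at the scale $\rho^{-(d-1)}$. This requires a careful joint equidistribution of $(\rho^{-(d-1)}Q\bmod\mathscr L,\;K(v_0)A^t)$ together with the later factors. First I would try Fubini: fix $v_0$ and the coarse position of $Q$, and use that the fine position $\rho^{-(d-1)}Q$, over a cell of macroscopic size $\rho^{d-1}$, is exactly uniform on a fundamental domain. The error from freezing $f_{\rm in}$ and $G_1$ on such cells is controlled by their uniform continuity.
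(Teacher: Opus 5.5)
Your proposal is correct, and its engine is the same as the paper's: the cell decomposition in your final paragraph. Write $\rho^{-(d-1)}Q=m+r$ with $m\in\mathscr L$ and $r$ in a fundamental cell $\mathscr F$. The collision record then depends only on $(r,v_0)$, while $m$ enters only through the Riemann sum $\varepsilon^d\sum_m G(\varepsilon(m+r),\varpi)f_{\rm in}(\varepsilon(m+r),v_0)$, which converges uniformly to $\int G(Q,\varpi)f_{\rm in}(Q,v_0)\,dQ$. This exact decomposition should be your Step~2 rather than a fallback. Mere joint equidistribution of $\alpha$ with $Q$ in the limit, as Step~2 is worded, cannot be combined with observables that themselves change with $\rho$ through $A^t$. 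Your Step~3 already tacitly assumes that $\alpha$ is exactly uniform and independent of $v_0$.

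Where you genuinely differ from the paper is in how the offset $r$ is handled afterwards.
\begin{itemize}
\item The paper fixes $r$; for almost every $r$ the shift $-rM_0^{-1}$ is irrational. It then quotes Theorem~\ref{thm:impact-micro} with $q_\star=r$, $\beta=0$ as a black box and integrates over $r$ by dominated convergence. No new equidistribution or continuity argument is needed, but the route rests on the irrational-translation case of Theorem~\ref{thm:product-equi}.
\item You keep $r$, equivalently $\alpha$, random inside the equidistribution statement. Averaging a bounded continuous function on $X$ over the torus fibres gives a bounded continuous function on $X_0$, and $\mu$ is the fibre-uniform lift of $\mu_0$. So you only need the purely linear case of Theorem~\ref{thm:product-equi}, which is a modest gain in what is assumed.
\end{itemize}

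The price of your route is that you must rerun the continuous-mapping argument: Lemma~\ref{lem:record-map}, \eqref{eq:varying-record-map} and Proposition~\ref{prop:record-map-law}. This has to be done with input measure $\lambda\otimes\mu\otimes\mu_0^{\otimes(n-1)}$ and a $\rho$-dependent weight in $v_0$. You must also discard the $O(\rho^d)$-volume set of offsets lying inside an obstacle. All of this is routine: $\mu_\alpha=\mu$ for irrational $\alpha$, the weight converges uniformly, and the discarded offsets correspond to lattice points near the null hyperplane $x_1=0$.
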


Note that the limit is independent of $M_0$ except through the nonresonance hypothesis.

\begin{proof}
We use the fundamental-domain reduction of
\cite[Section~4.5]{MarklofStrom2011}.  Let $\mathscr F$ be a bounded
fundamental cell of $\mathscr L$ and write the microscopic initial point as $m+r$, with
$m\in\mathscr L$ and $r\in\mathscr F$.
First take $f_{\rm in}$ continuous and compactly supported
and insert an auxiliary compact cutoff in the full record variable
$\varpi$.  The spatial weight is
\begin{equation}\label{eq:macro-riemann-sum}
 \varepsilon^d\sum_{m\in\mathscr L}
 G\bigl(\varepsilon(m+r),\varpi\bigr)
 f_{\rm in}\bigl(\varepsilon(m+r),v_0\bigr).
\end{equation}
As $\varepsilon\to 0$, this Riemann sum converges uniformly in $r$, $v_0$, and $\varpi$ on compacta, to
\begin{equation*}
 \int_{\mathbb R^d}G(Q,\varpi)f_{\rm in}(Q,v_0)\,dQ.
\end{equation*}
The collision record depends on $(r,v_0)$ but not on $m$, and the excluded
part of $\mathscr F$ has volume $O(\rho^d)$.

At fixed $\rho$, grazing contacts and contacts
with lower-dimensional face strata form a null set for billiard flux
measure. Infinite flights are also flux-null: unless the velocity is
orthogonal to a nonzero vector of the dual lattice, its forward linear orbit
modulo $\mathscr L$ is dense and hence enters the open obstacle. The
exceptional velocities form a countable union of great subspheres. The
regular collision map preserves flux measure, while Liouville measure in
free-flight coordinates is flux measure times flight time. Finite iteration
therefore gives
$\mu_{\rm in}(\mathcal Y_\rho\setminus
\mathcal A_{n,\rho}^{\rm mac})=0$, since
$\mu_{\rm in}$ is absolutely continuous with respect to $dQ\,d\omega$.

For almost every $r$, $-rM_0^{-1}\notin\mathbb Q^d$.  Apply
Theorem~\ref{thm:impact-micro} with $\beta=0$ and dominated convergence in
$r$.  The first factor is $\Phi_{v_0}$ and the later, lattice-centred factors
are $\Phi_{0,v_j}$.  Equation~\eqref{eq:macro-riemann-sum} now gives
\eqref{eq:macro-test-limit}. Tightness and continuity-set
approximation remove the auxiliary record cutoff; $L^1$ approximation then
yields the claim for general $f_{\rm in}$.
\end{proof}

Let $T_0=0$ and $T_n=\xi_1+\cdots+\xi_n$.  For
$T_j\leq t<T_{j+1}$ define
\begin{equation*} \Xi(t)=
 \left(Q_0+\sum_{k=1}^j\xi_kv_{k-1}
 +(t-T_j)v_j,\ v_j\right).
\end{equation*}
The impact-extended process will be defined in the next section.

Let $\Prob_{\rm in}$ denote the path law determined by \eqref{eq:macro-density}. Since $0\leq\Phi_{\alpha,v}\leq1$, integration over the simplex
$\xi_1+\cdots+\xi_n\leq T$ gives the uniform collision-count bound
\begin{equation}\label{eq:factorial-bound}
 \Prob_{\rm in}(T_n\leq T)\leq\frac{(A_{\mathcal P}T)^n}{n!}.
\end{equation}
In particular, the limiting flight almost surely
has only finitely many collisions in every bounded time interval.

\begin{corollary}\label{cor:continuous-time}
Let $d\geq3$, let $\mathcal P$ be $\mathscr L$-nonresonant, and let
$\mu_{\rm in}$ be as in \eqref{eq:muin}.  For any $t_1,\ldots,t_M\in\mathbb R_{\geq0}$ and
any Borel sets $D_1,\ldots,D_M\subset\mathbb R^d\times S_1^{d-1}$ with
$(dQ\,d\omega)(\partial D_j)=0$, $j=1,\ldots,M$, we have
\begin{align*} &\lim_{\rho\to0}
 \mu_{\rm in}\Bigl\{(Q,V)\in\mathcal Y_\rho:
 F_{t_j,\rho}(Q,V)\in D_j,\ j=1,\ldots,M\Bigr\}\\
 &\qquad=
 \Prob_{\rm in}\bigl(\Xi(t_j)\in D_j,\ j=1,\ldots,M\bigr).
\end{align*}
The convergence is uniform for $(t_1,\ldots,t_M)$ in compact subsets of
$\mathbb R_{\geq0}^M$.
\end{corollary}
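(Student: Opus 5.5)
We deduce Corollary~\ref{cor:continuous-time} from Theorem~\ref{thm:macro-impact} by truncating at a fixed number of collisions. The position and velocity at macroscopic time $t$ depend only on the record up to the last collision before $t$.

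\textbf{Step 1: truncation.} Fix $T\geq\max_j t_j$ and $\delta>0$. By \eqref{eq:factorial-bound} choose $n$ with $(A_{\mathcal P}T)^n/n!<\delta$, so that $\Prob_{\rm in}(T_n\leq T)<\delta$. Theorem~\ref{thm:macro-impact} applies with $n$ collisions. Taking $G(Q_0,\varpi)=h(\xi_1+\cdots+\xi_n)$ with $h$ a continuous cutoff approximating $\mathbf 1\{\cdot\leq T\}$ from above, we get that the microscopic event ``$T_n^{(\rho)}\leq T$'' has $\limsup$ probability at most $\delta$. Here $T_n^{(\rho)}=\rho^{d-1}t_n$ is the $n$th macroscopic collision time. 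Hence, up to error $2\delta$, both sides can be computed on the event $\{T_n>T\}$.

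\textbf{Step 2: expressing $F_{t,\rho}$ through the record.} On $\mathcal A_{n,\rho}^{\rm mac}\cap\{T_n^{(\rho)}>t\}$ with $T_j^{(\rho)}\leq t<T_{j+1}^{(\rho)}$, the macroscopic position is
\[
Q+\sum_{k=1}^j \xi_k^{(\rho)}v_{k-1}+(t-T_j^{(\rho)})v_j+O(\rho^{d})
\]
and the velocity is $v_j$. The $O(\rho^d)$ term, uniform in all variables, accounts for the offset between the scatterer centres and the actual collision points, which are at macroscopic distance $\varepsilon\rho=\rho^d$. So $F_{t_j,\rho}(Q,V)=\Psi_t(Q,\mathcal R_{n,\rho}(Q,V))+o(1)$, where $\Psi_t(Q_0,\varpi)$ is exactly the formula defining $\Xi(t)$ and $v_j$ is the record's velocity. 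The map $\Psi_t$ is continuous on $\mathbb R^d\times\Omega_n$ away from the set $\{T_k=t_j\ \text{for some } k,j\}$. This set is null for the limiting density \eqref{eq:macro-density}, since the $\xi$'s have a Lebesgue density.

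\textbf{Step 3: continuous mapping.} Consider the functional
\[
G(Q_0,\varpi)=\prod_j\mathbf 1_{D_j}(\Psi_{t_j}(Q_0,\varpi))\,\mathbf 1\{T_n>T\}.
\]
Its discontinuity set consists of points where some $\Psi_{t_j}\in\partial D_j$, points where $T_k=t_j$, and points where $T_n=T$. I expect the main obstacle to be showing that the first of these is null under the limiting law. On a fixed word component and interval $T_k<t<T_{k+1}$, the map $(Q_0,v_0,\xi,b)\mapsto\Psi_t$ is a submersion onto $(Q,v_k)$: it is a translation in $Q_0$, and $v_0\mapsto v_0P_k$ is a diffeomorphism. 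Hence the pushforward of the absolutely continuous measure is absolutely continuous with respect to $dQ\,d\omega$, and $\partial D_j$ is null.

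The portmanteau / extended continuous mapping theorem, applied to the weak convergence of Theorem~\ref{thm:macro-impact} (a bounded Borel $G$ continuous almost everywhere for the limit suffices), gives convergence of the main term. The $o(1)$ perturbation in Step 2 is handled by sandwiching with $\eta$-neighbourhoods and interiors of $D_j$ and letting $\eta\to0$, using $(dQ\,d\omega)(\partial D_j)=0$. Letting $\delta\to0$ completes the argument.

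\textbf{Uniformity in $(t_1,\ldots,t_M)$.} This follows by a standard equicontinuity argument. Both sides change by at most the probability of a collision in a short time window, plus the probability mass of a thin neighbourhood of $\partial D_j$. Both are uniformly small: on the limit side by absolute continuity, and on the $\rho$ side by the same convergence applied at a finite grid of times.
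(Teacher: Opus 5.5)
Your proposal is correct and follows essentially the same route as the paper. Both arguments truncate at a fixed collision count using \eqref{eq:factorial-bound}, transfer that tail to finite $\rho$ via Theorem~\ref{thm:macro-impact}, read off the observed states from the first finitely many flights, and get absolute continuity of the one-time marginals because, on each word component, the observation map is a translation in $Q_0$ composed with a fixed orthogonal map in $v_0$.

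One minor point: the $O(\rho^d)$ term in Step~2 is actually zero. The record entries $\xi_k^{(\rho)}=\rho^{d-1}\tau_k$ are the true flight times, so $F_{t,\rho}(Q,V)=\Psi_t(Q,\mathcal R_{n,\rho}(Q,V))$ exactly on the relevant event, and the $\eta$-sandwiching step can be dropped.
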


\begin{proof}
This is the counterpart of \cite[Theorem~1.4]{MarklofStrom2011}.  We follow
\cite[Sections~5.1, 5.2 and 5.4]{MarklofStrom2011}, with
Theorem~\ref{thm:macro-impact} in place of the segment-limit theorem used
there. Let $n_j$ be the number of collisions before time $t_j$ and put $N=\max_j n_j$; the first $N+1$ flights determine the observed states.  The hypersurfaces $\{T_k=t_j\}$,
the impact-cell boundaries, the grazing records, and the exceptional chart
direction are null.

For each fixed face word and fixed flight variables, the map from
$(Q_0,v_0)$ to an observed position--velocity pair has absolute Jacobian
one: the position is a translation of $Q_0$, while the observed velocity is
$v_0$ multiplied by a fixed orthogonal product of reflection matrices.
Thus every one-time marginal is absolutely continuous with respect to
$dQ\,d\omega$.

The collision-count truncation and time bracketing in \cite{MarklofStrom2011} now apply, with geometric factors bounded by $A_{\mathcal P}$.  For
each fixed $N$ and $T$, Theorem~\ref{thm:macro-impact}, applied to the
continuity event $T_N\leq T$, transfers the finite-$\rho$ collision-count tail
to the limiting one.  Equation~\eqref{eq:factorial-bound} then bounds this tail
uniformly for $T$ in compact intervals.  Letting $N\to\infty$, followed by the
standard tightness and $L^1$ approximations, proves the claim.
\end{proof}

\section{A continuous-time Markov process}\label{sec:kinetic}

Following \cite[Section~6]{MarklofStrom2011}, we
enlarge the state space by the remaining time and the data of the next collision:
\begin{equation*} \mathcal X_{\mathcal P}=
 \{(Q,V,\xi,W):Q\in\mathbb R^d,\ V\in S_1^{d-1},\
 \xi\geq0,\ W\in\mathcal B_V\}.
\end{equation*}
Here $W=(\ell,b)$ is the next labelled impact parameter.  
Write $\ell(W)=\ell$ and $b(W)=b$, and set
\begin{equation*} m_V(dW)=\sum_{\ell=1}^s
 \mathbf 1_{B_{V,\ell}}(b)\,\delta_\ell(d\ell)\,db,
 \qquad W=(\ell,b),
\end{equation*}
so that the reference measure is
\begin{equation*} d\sigma_1=dQ\,d\omega(V)\,d\xi\,m_V(dW).
\end{equation*}

Let $T_0=0$ and $T_n=\xi_1+\cdots+\xi_n$.  If
$T_j\leq t<T_{j+1}$, define
\begin{equation}\label{eq:extended-flight}
 \widehat\Xi(t)=
 \left(Q_0+\sum_{k=1}^j\xi_kv_{k-1}+(t-T_j)v_j,\,
 v_j,\,T_{j+1}-t,\,W_{j+1}\right).
\end{equation}
At the end of a flight with state $(V,W)$, $W=(\ell,b)$, put
\begin{equation*} V^+=\mathcal R(V,b)=VR_\ell,
 \qquad z^+=\mathcal Z(V,b).
\end{equation*}
The new velocity, remaining time and labelled impact parameter have transition
law
\begin{align}
 \mathsf J((V,W);dV'\,d\xi'\,dW') =\delta_{V^+}(dV')\,
 \Phi_{0,V^+}\bigl(\xi',b(W'),z^+\bigr)\,
 d\xi'\,m_{V^+}(dW').
 \label{eq:impact-transition}
\end{align}
The Borel property of the incidence and avoidance events in
\eqref{eq:Phi-definition}, together with disintegration, shows that
$\mathsf J$ is a Borel probability kernel.  Hence $\widehat\Xi$ is a
time-homogeneous Markov process.

More explicitly, from $x=(Q,V,\xi,W)$ the process follows
$(Q+tV,V,\xi-t,W)$ for $0\leq t<\xi$ and, at $t=\xi$, jumps at the
collision position $Q+\xi V$ to a state
$(Q+\xi V,V',\xi',W')$ distributed according to
$\mathsf J((V,W);dV'\,d\xi'\,dW')$.  If $\xi=0$, we keep the initial state at
time zero and apply the jump before every positive time.  Iteration defines
the process from every state.  The same simplex estimate as
in \eqref{eq:factorial-bound}, applied after the first jump and uniformly in
the initial state, shows that, almost surely, there are at most finitely many collisions in every bounded time window.

Let $\mathcal G_\rho\subset\mathcal Y_\rho$ be the full-measure
Borel set on which the next microscopic collision from
$(\rho^{-(d-1)}Q,V)$ exists and is regular.  If $\ell_1$ and
$x_1^{(\rho)}$ are the face label and local boundary point of that collision,
put
\begin{equation*}
 W_1^{(\rho)}(Q,V)
 =\left(\ell_1,-\prperp\bigl(x_1^{(\rho)}K(V)\bigr)\right)
 \in\mathcal B_V.
\end{equation*}
Define
\begin{equation}\label{eq:lift-map}
 \mathcal E_\rho(Q,V)=
 \left(Q,V,\rho^{d-1}
 \tau_1\bigl(\rho^{-(d-1)}Q,V;\rho\bigr),
 W_1^{(\rho)}(Q,V)\right)
\end{equation}
for $(Q,V)\in\mathcal G_\rho$ and extend to a measurable map to all of $\mathcal Y_\rho$.

Set
$\widehat F_{t,\rho}=\mathcal E_\rho\circ F_{t,\rho}$.
For the initial density in Section~\ref{sec:macro}, define
\begin{equation*} f_{\rm lift}(Q,V,\xi,(\ell,b))
 =f_{\rm in}(Q,V)\Phi_V(\xi,b),
 \qquad b\in B_{V,\ell}.
\end{equation*}
This is a probability density with respect to $\sigma_1$.  Let
$\Prob_{f_{\rm lift}}$ denote the law of \eqref{eq:extended-flight} with this
initial density.

\begin{theorem}\label{thm:extended-limit}
Let $d\geq3$, let $\mathcal P$ be $\mathscr L$-nonresonant, and let
$\mu_{\rm in}$ be as in \eqref{eq:muin}. For any $t_1,\ldots,t_M\in\mathbb R_{\geq0}$ and Borel sets
$\mathcal D_1,\ldots,\mathcal D_M\subset\mathcal X_{\mathcal P}$ with
$\sigma_1(\partial\mathcal D_j)=0$, $j=1,\ldots,M$,
\begin{align*} &\lim_{\rho\to0}
 \mu_{\rm in}\Bigl\{(Q,V)\in\mathcal Y_\rho:
 \widehat F_{t_j,\rho}(Q,V)\in\mathcal D_j,\
 j=1,\ldots,M\Bigr\}\\
 &\qquad=
 \Prob_{f_{\rm lift}}\bigl(\widehat\Xi(t_j)\in\mathcal D_j,\
 j=1,\ldots,M\bigr).
\end{align*}
The convergence is uniform for $(t_1,\ldots,t_M)$ in compact subsets of
$\mathbb R_{\geq0}^M$.
\end{theorem}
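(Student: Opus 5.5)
The plan is to reduce the statement to Theorem~\ref{thm:macro-impact}, in the same way Corollary~\ref{cor:continuous-time} was obtained, but now keeping track of the remaining flight time and the next labelled impact parameter.

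\textbf{Step 1: express the lifted state through the collision record.} Fix times $t_1,\ldots,t_M$ in a compact set $[0,T]$ and a truncation level $N$. On the event $\mathcal A^{\rm mac}_{N+1,\rho}\cap\{T_N^{(\rho)}>T\}$, the lifted microscopic state $\widehat F_{t_j,\rho}(Q,V)$ is determined by the first $N+1$ collisions. Concretely, if $T_k^{(\rho)}\le t_j<T_{k+1}^{(\rho)}$, then
\begin{equation*}
 \widehat F_{t_j,\rho}(Q,V)=\Bigl(Q+\sum_{i\le k}\xi_i^{(\rho)}v_{i-1}+(t_j-T_k^{(\rho)})v_k,\ v_k,\ T_{k+1}^{(\rho)}-t_j,\ W_{k+1}^{(\rho)}\Bigr),
\end{equation*}
up to an $O(\rho^{d})$ discrepancy in the macroscopic position. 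This discrepancy comes from the scatterer size, since collisions happen at scatterer boundaries rather than at their centres.

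\textbf{Step 2: pass to the limit on the truncated event.} Write the event $\{\widehat F_{t_j,\rho}\in\mathcal D_j\ \forall j\}$ as $\{(Q,\mathcal R_{N+1,\rho}(Q,V))\in\mathcal U\}$ for a Borel set $\mathcal U\subset\mathbb R^d\times\Omega_{N+1}$. By construction, $\widehat\Xi$ is the same function of $(Q_0,\varpi)$. Theorem~\ref{thm:macro-impact}, together with the portmanteau theorem, then gives convergence of the truncated probabilities, provided $\partial\mathcal U$ is null for the limiting density \eqref{eq:macro-density}. Because that density is absolutely continuous with respect to $dQ_0\,d\omega\prod d\xi_j\,db_j$, it suffices to check Lebesgue-nullness.

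\textbf{Step 3: the main obstacle, showing $\partial\mathcal U$ is null.} The boundary of $\mathcal U$ is contained in the union of three pieces: the hypersurfaces $\{T_k=t_j\}$, the face-cell boundaries and grazing records, and the preimages of $\partial\mathcal D_j$ under the maps $\varpi\mapsto\widehat\Xi(t_j)$ restricted to $\{T_k<t_j<T_{k+1}\}$. The first two pieces are null, as in Corollary~\ref{cor:continuous-time}. For the third, I would extend the Jacobian argument from that corollary. For a fixed face word, consider the map
\begin{equation*}
 (Q_0,v_0,\xi_{k+1},b_{k+1})\mapsto\bigl(Q,\,v_k,\,T_{k+1}-t_j,\,W_{k+1}\bigr),
\end{equation*}
with the other record variables held fixed. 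Here $Q$ is a translate of $Q_0$, $v_k$ is $v_0$ times a fixed orthogonal matrix, the remaining time is a translate of $\xi_{k+1}$, and $b_{k+1}$ is unchanged. One subtlety is that $b_{k+1}\in B_{v_k}$ lives in a $v_k$-dependent cell; since $v_k$ is a fixed orthogonal image of $v_0$, this is a measurable fibre change and not a degeneracy. The map is therefore locally a measure isomorphism onto its image with Jacobian one. By Fubini, each one-time marginal is absolutely continuous with respect to $\sigma_1$, so the preimage of a $\sigma_1$-null set is null. Continuity of these maps in the disjoint-word topology then shows that $\partial\mathcal U$ is contained in the union of these null sets.

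\textbf{Step 4: remove the truncation and conclude.} The truncation errors are bounded by $\mu_{\rm in}(T_N^{(\rho)}\le T)$ and by $\mu_{\rm in}(\mathcal Y_\rho\setminus\mathcal A^{\rm mac}_{N+1,\rho})$. The second vanishes for small $\rho$. For the first, apply Theorem~\ref{thm:macro-impact} to the continuity event $\{T_N\le T\}$ and use the factorial bound \eqref{eq:factorial-bound}; the limsup of the error is then at most $(A_{\mathcal P}T)^N/N!$, uniformly in the $t_j\le T$. On the limit side, the initial density of $\widehat\Xi(0)$ is $f_{\rm in}\Phi_V$, which is exactly $f_{\rm lift}$. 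Hence the limiting probabilities coincide with those of $\Prob_{f_{\rm lift}}$, whose Markov description in terms of $\mathsf J$ matches the product density \eqref{eq:macro-density}.

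Uniformity in the times follows by bracketing the $t_j$ between nearby grid times. Since the bracketing hypersurfaces $\{T_k=t\}$ carry no mass, $\Prob_{f_{\rm lift}}(\widehat\Xi(t_j)\in\mathcal D_j)$ is continuous in $t$ for such sets, and the bracketing argument of \cite[Section~5.4]{MarklofStrom2011} then gives uniformity. Finally, letting $N\to\infty$ and applying an $L^1$ approximation of $f_{\rm in}$ completes the argument.
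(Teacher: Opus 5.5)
Your proposal is correct and follows essentially the paper's route through \cite[Section~6.1]{MarklofStrom2011}: reduce to Theorem~\ref{thm:macro-impact} while retaining the flight that straddles the last observation time, control the collision count by \eqref{eq:factorial-bound}, and make $\partial\mathcal D_j$ negligible via absolute continuity of the one-time marginals with respect to $\sigma_1$. The paper proves that last point quantitatively, bounding the $n$-collision contribution by $\|f_{\rm in}\|_\infty(A_{\mathcal P}t)^n\sigma_1(\mathcal D)/n!$ after reducing to bounded compactly supported $f_{\rm in}$, while your Jacobian-one Fubini argument gives the same null-set property qualitatively; also, the $O(\rho^d)$ position discrepancy in Step~1 does not occur, because the macroscopic position is exactly $Q+\sum_{i\le k}\xi_i^{(\rho)}v_{i-1}+(t_j-T_k^{(\rho)})v_k$ (the $\rho^d$ terms in \eqref{eq:terminal-centre-finite-rho} concern scatterer centres), so your Step~2 event identity is exact on the truncated event.
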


\begin{proof}
We follow \cite[Section~6.1]{MarklofStrom2011}, using
Theorem~\ref{thm:macro-impact} as the impact-resolved analogue of
\cite[Eq.~(6.12)]{MarklofStrom2011}.  For a fixed collision-count vector one
retains one further flight, since $(\xi,W)$ records the flight straddling the
last observation time.  Relative to
\cite[Theorem~6.1]{MarklofStrom2011}, its future-velocity coordinate is
replaced by $W$ and its transition density by
\eqref{eq:impact-transition}.

By a standard $L^1$ approximation argument, we may assume without loss of generality that $f_{\rm in}$ is
bounded and compactly supported.  Propagation is finite, the impact bundle is
bounded, the remaining flight is tight by
Lemma~\ref{lem:shadow-stability}(iii), and \eqref{eq:factorial-bound} controls
the collision count.

After summing over face words, the flux change of variables and
$\vol_E(B_V)\leq A_{\mathcal P}$ give the analogue of
\cite[Eqs.~(6.13)--(6.14)]{MarklofStrom2011}: for each bounded Borel test
set $\mathcal D$, the unweighted $n$-collision reference-measure contribution
is at most $(A_{\mathcal P}t)^n\sigma_1(\mathcal D)/n!$.  Hence, since
$f_{\rm lift}=f_{\rm in}\Phi_V$ and $0\leq\Phi_V\leq1$, the
$f_{\rm lift}$-weighted contribution is at most $\|f_{\rm in}\|_\infty (A_{\mathcal P}t)^n\sigma_1(\mathcal D)/n!$. This and \eqref{eq:factorial-bound} give the continuity-set approximation and
compact-time uniformity as in \cite[Section~6.1]{MarklofStrom2011}.
\end{proof}

Decomposition by collision number and the facewise map
\[
 (V,b)\longmapsto(VR_\ell,\mathcal Z(V,b)),
 \qquad b\in B_{V,\ell},
\]
which preserves $d\omega(V)\,db$ by
\eqref{eq:flux-in}--\eqref{eq:flux-out}, show that an initial density remains
absolutely continuous with respect to $\sigma_1$.
For $f\in L^1_{\rm loc}(\mathcal X_{\mathcal P},\sigma_1)$, define $K_tf$ by
\begin{equation} \int_{\mathcal D}K_tf\,d\sigma_1
 =\int_{\mathcal X_{\mathcal P}}
 \Prob_x\{\widehat\Xi(t)\in\mathcal D\}f(x)\,d\sigma_1(x)
 \label{eq:KProb}
\end{equation}
for every bounded Borel set $\mathcal D$, where $\Prob_x$ is the law started
at $x$; finite propagation makes the right-hand side locally finite.
Spatial translation covariance is understood also after taking $Q$ modulo
any full-rank lattice $\mathscr L_Q$.

\begin{proposition}\label{prop:stationary-density}
The locally integrable density
\begin{equation*} f_*(Q,V,\xi,(\ell,b))=\Phi_V(\xi,b)
\end{equation*}
satisfies $K_tf_*=f_*$ for all $t\geq0$.  On
$\mathbb R^d/\mathscr L_Q$ it becomes a probability density after
multiplication by $\vol(\mathbb R^d/\mathscr L_Q)^{-1}$.
\end{proposition}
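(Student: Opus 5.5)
The plan is to verify $K_tf_*=f_*$ directly from the renewal structure of $\widehat\Xi$, after reducing to a single jump. Since $f_*$ does not depend on $Q$ and the dynamics commute with spatial translations, every computation can be done on $\mathbb R^d/\mathscr L_Q$. There $\int f_*\,d\sigma_1=\vol(\mathbb R^d/\mathscr L_Q)$ by \eqref{eq:Phi-normalisation}, which gives the normalisation claim at once. The process is deterministic between jumps and, by the simplex bound \eqref{eq:factorial-bound}, has finitely many jumps in bounded time. It therefore suffices to check that $f_*\,d\sigma_1$ is invariant under the piecewise-deterministic dynamics. Concretely, transport $(Q,V,\xi,W)\mapsto(Q+tV,V,\xi-t,W)$ preserves $\sigma_1$, so $f_*$ is invariant if and only if the mass leaving through $\{\xi=0\}$ and reinjected by $\mathsf J$ produces exactly the profile of $f_*$. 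That is, the density of $f_*$ must satisfy $-\partial_\xi f_*=\text{gain}$.

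\textbf{Gain term.} The outflow at $\xi=0$ has density $f_*(Q,V,0,W)=\Phi_V(0,b)$ on $\{\xi=0\}$ with respect to $dQ\,d\omega(V)\,m_V(dW)$. Pushing it through \eqref{eq:impact-transition} gives
\begin{equation*}
 \text{gain}(Q,V',\xi',W')
 =\int \Phi_V(0,b)\,\Phi_{0,V'}\bigl(\xi',b(W'),\mathcal Z(V,b)\bigr)\,\mathbf 1\{V'=VR_\ell\}\,d\omega(V)\,m_V(dW).
\end{equation*}
Next apply the facewise change of variables $(V,b)\mapsto(VR_\ell,\mathcal Z(V,b))$, which maps $B_{V,\ell}$ onto $C_{V',\ell}$ measure-preservingly by \eqref{eq:flux-in}--\eqref{eq:flux-out}. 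After this substitution and summation over $\ell$, the gain becomes
\begin{equation*}
 \int_{C_{V'}}\Phi_{V'R_\ell}(0,\cdot)\,\Phi_{0,V'}(\xi',b',z)\,dz .
\end{equation*}
The first factor should equal one: for irrational $\alpha$, the probability that the tube of length $0$ contains no point is $1$. So the gain is $\int_{C_{V'}}\Phi_{0,V'}(\xi',b',z)\,dz$.

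\textbf{Key identity.} What remains is the polyhedral analogue of the Marklof--Str\"ombergsson relation
\begin{equation*}
 -\partial_\xi\Phi_V(\xi,b)=\int_{C_V}\Phi_{0,V}(\xi,b,z)\,dz,
 \qquad\text{equivalently}\qquad
 \Phi_V(\xi,b)=\int_\xi^\infty\int_{C_V}\Phi_{0,V}(\eta,b,z)\,dz\,d\eta.
\end{equation*}
This is the step I expect to be the main obstacle. I would prove it by a Siegel/Palm-type decomposition of the generic affine lattice $\Lambda(g)=\mathbb Z^dM+\eta$ with respect to its backward first point. For a generic affine lattice with a point at $y=\xi e_1+b$ at the end of an empty tube, look backwards from the terminal point $y$, i.e.\ at points with first coordinate less than $0$. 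Conditioning on the last lattice point $y'$ strictly before the tube's entrance, whose $\prperp$-coordinate is $z$ and which lies at distance $\eta\geq\xi$ from $y$, turns the avoidance event into the collision-to-collision event with parameters $(\eta,b,z)$. The foliation formula \cite[Propositions~7.3 and~7.10]{MarklofStrom2010} then supplies Lebesgue density $d\eta\,dz$ for $y'$. Uniqueness of this backward point, and finiteness almost surely, follow exactly as in Lemma~\ref{lem:general-shadow}(i), using the ball bound of Lemma~\ref{lem:shadow-stability}(iii) with the time direction reversed. The sign conventions $B_v=-C_v$ and $y=\xi e_1+b+z$ must be tracked carefully here. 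Remark~\ref{rem:Phi-covariance} handles the reflected frame $K(V')$ versus the transported frame.

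\textbf{Fallback.} If the identity proves awkward, I would instead take $f_{\rm in}$ to be the uniform density on $\mathbb R^d/\mathscr L_Q$. The microscopic billiard flow preserves Liouville measure, so $F_{t,\rho}$ preserves $\mu_{\rm in}$ up to the null sets already removed. Hence the pushforward of $\mu_{\rm in}$ under $\widehat F_{t,\rho}$ equals its pushforward under $\mathcal E_\rho$ for every $t$. By Theorem~\ref{thm:extended-limit}, the case $t=0$ converges to $f_{\rm lift}=\Phi_V$ and the general case to $K_tf_{\rm lift}$, which forces $K_tf_*=f_*$. This requires a periodic-in-$Q$ version of Theorem~\ref{thm:extended-limit}. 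That version should follow verbatim from the proof of Theorem~\ref{thm:macro-impact}, since the Riemann sum \eqref{eq:macro-riemann-sum} is unchanged for continuous periodic $f_{\rm in}$.
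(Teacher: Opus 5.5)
Your fallback argument is essentially the paper's proof. Liouville invariance of the microscopic flow makes the $\widehat F_{t,\rho}$- and $\mathcal E_\rho$-images of a constant density coincide, and Theorem~\ref{thm:extended-limit} at times $t$ and $0$ turns this into $K_tf_*=f_*$. The only difference is the localisation, and there your justification is off. A constant (or $\mathscr L_Q$-periodic) density is not a probability density on $\mathbb R^d$. Also, the microscopic billiard has period $\varepsilon\mathscr L$, which varies with $\rho$, so it does not in general descend to $\mathbb R^d/\mathscr L_Q$. Theorem~\ref{thm:extended-limit} therefore does not apply as stated, and a ``periodic version'' would have to be obtained by localising anyway. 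The paper localises directly. It tests on bounded continuity sets with $Q$-projection in $A$ and uses the genuine probability density $h=c\,\mathbf 1_C(Q)$ with $C\supset A+\overline{B(0,t)}$. By finite propagation speed, $h$ equals $c$ at every initial point that can contribute at time $0$ or time $t$.

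Your main route is genuinely different and sound in outline. It checks directly that $f_*$ is a stationary solution of \eqref{eq:kinetic-equation}, i.e.\ $-\partial_\xi\Phi_V(\xi,b)=[\mathcal C_{\mathcal P}f_*]=\int_{C_V}\Phi_{0,V}(\xi,b,z)\,dz$. The tool is the last-point identity $\Phi_V(\xi,b)=\int_\xi^\infty\int_{C_V}\Phi_{0,V}(\eta,b,z)\,dz\,d\eta$. Your Palm sketch of that identity works:
\begin{itemize}
\item under $\nu_y$ the set $\Lambda-y$ is a $\mu_0$-random lattice;
\item $\Lambda-p=\Lambda-y$ for the last point $p$ of the backward tube;
\item the foliation formula gives the density $\nu^{(0)}_{y-p}\{\Lambda\cap(\mathfrak Z_V(0,\eta)+z)=\varnothing\}=\Phi_{0,V}(\eta,b,z)$ in $d\eta\,dz$, with $z=-\prperp(p)\in C_V$.
\end{itemize}
This buys an intrinsic statement about the abstract process $\widehat\Xi$, valid for every polyhedron and every $d$, with no use of nonresonance or of the Boltzmann--Grad limit. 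The paper's route needs no new lattice identity but inherits the hypotheses of Theorem~\ref{thm:extended-limit}.

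The cost is a step you only assert: ``$f_*$ is invariant iff $-\partial_\xi f_*=$ gain''. The paper derives \eqref{eq:kinetic-equation} only under extra regularity assumptions and proves no uniqueness, so passing from this infinitesimal balance to $K_tf_*=f_*$ needs an argument. One clean way is a last-collision decomposition:
\[
K_tf_*(Q,V,\xi,W)=\Phi_V(\xi+t,b)+\int_0^t\int_{C_V}\phi_{t-s}(Q-sV,V,z)\,\Phi_{0,V}(\xi+s,b,z)\,dz\,ds,
\]
where $\phi_r$ is the density of outgoing collision states $(Q,V,z)$ at time $r$. By the facewise flux-preserving change of variables, $\phi$ satisfies a Volterra renewal equation. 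Its kernel has mass at most $\vol_E(C_V)\leq A_{\mathcal P}$ per unit time, so by Gronwall it has a unique locally bounded solution. Your identity together with $\Phi_V(0^+,b)=1$ shows that $\phi\equiv1$ solves it. The same identity then collapses the right-hand side to $\Phi_V(\xi+t,b)+\Phi_V(\xi,b)-\Phi_V(\xi+t,b)=\Phi_V(\xi,b)$. The normalisation claim is handled identically in both routes via \eqref{eq:Phi-normalisation}.
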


\begin{proof}
It suffices to test the two locally finite measures against bounded Borel
continuity sets $\mathcal D$ whose $Q$-projection is contained in a bounded
set $A$.  Fix $t\geq0$, choose a bounded Borel set $C$ with
$A+\overline{B(0,t)}\subset C$, and put
\[
 h(Q,V)=c\,\mathbf 1_C(Q),\qquad
 c=[\vol(C)\,\omega(S_1^{d-1})]^{-1}.
\]
Thus $h\,dQ\,d\omega$ is a probability measure.  Every unit-speed
trajectory whose state at time $t$ belongs to $\mathcal D$ starts with its
position in $A+\overline{B(0,t)}$, where $h=c$; on the time-zero event
$\mathcal E_\rho^{-1}(\mathcal D)$ we likewise have $h=c$.  On the part of
the labelled collision section corresponding to $F_\ell$, the map
\[
 (V,b)\longmapsto(VR_\ell,\mathcal Z(V,b)),
 \qquad b\in B_{V,\ell},
\]
preserves $d\omega(V)\,db$ by \eqref{eq:flux-in}--\eqref{eq:flux-out}.
Liouville invariance of the microscopic billiard flow therefore gives, for
every $\rho$,
\[
 \int_{\mathcal Y_\rho}\!\mathbf 1_{\mathcal D}
   (\widehat F_{t,\rho}(Q,V))h(Q,V)\,dQ\,d\omega(V)
 =\int_{\mathcal Y_\rho}\!\mathbf 1_{\mathcal D}
   (\mathcal E_\rho(Q,V))h(Q,V)\,dQ\,d\omega(V).
\]
Apply Theorem~\ref{thm:extended-limit} at times $t$ and $0$.  Its initial
lift is $hf_*$, and finite propagation gives
$K_t(hf_*)=cK_tf_*$ on the $Q$-projection $A$, while $hf_*=cf_*$ there.
Consequently
\[
 \int_{\mathcal D}K_tf_*\,d\sigma_1
 =\int_{\mathcal D}f_*\,d\sigma_1,
\]
which yields
$K_tf_*=f_*$ in $L^1_{\rm loc}$.  Finally,
\eqref{eq:Phi-normalisation} gives the stated normalisation on the torus.
\end{proof}

\begin{proposition}\label{prop:semigroup}
The family $\{K_t:t\geq0\}$ forms a semigroup on
$L^1_{\rm loc}(\mathcal X_{\mathcal P},\sigma_1)$ and a positive,
mass-preserving contraction semigroup on
$L^1(\mathcal X_{\mathcal P},\sigma_1)$.
\end{proposition}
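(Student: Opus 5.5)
The plan is to read off the semigroup property directly from the Markov property of $\widehat\Xi$, and to obtain the $L^1$ statements from the fact that $\Prob_x\{\widehat\Xi(t)\in\cdot\}$ is a probability measure. First we check that $K_t$ is well defined on $L^1_{\rm loc}$ and maps it into itself. Since $\widehat\Xi$ moves at unit speed, $\Prob_x\{\widehat\Xi(t)\in\mathcal D\}=0$ unless the $Q$-coordinate of $x$ lies in $A+\overline{B(0,t)}$, where $A$ is the bounded $Q$-projection of $\mathcal D$. Hence the right-hand side of \eqref{eq:KProb} only involves $f$ on a set of locally finite $\sigma_1$-measure, and so it defines a locally finite signed measure in $\mathcal D$. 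Absolute continuity of this measure with respect to $\sigma_1$ comes from the remark preceding Proposition~\ref{prop:stationary-density}: we decompose by the collision number $n$ before time $t$. On each piece the map $x\mapsto\widehat\Xi(t)$ is a composition of free transport $(Q,\xi)\mapsto(Q+tV,\xi-t)$, which preserves $dQ\,d\xi$, and of facewise maps $(V,b)\mapsto(VR_\ell,\mathcal Z(V,b))$, which preserve $d\omega\,db$ by \eqref{eq:flux-in}--\eqref{eq:flux-out}. At each jump the new variables $(\xi',W')$ are drawn from the density $\Phi_{0,V^+}\le1$ with respect to $d\xi'\,m_{V^+}(dW')$. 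The pushforward of $f\,d\sigma_1$ is therefore dominated, piece by piece, by $|f|$ transported measure-preservingly and integrated against densities bounded by $1$. Summing over $n$ using the simplex estimate $(A_{\mathcal P}t)^n/n!$ from \eqref{eq:factorial-bound} shows the total is locally finite and absolutely continuous. The Radon--Nikodym theorem then gives $K_tf\in L^1_{\rm loc}$, and linearity in $f$ is clear.

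For the semigroup identity $K_{t+s}=K_tK_s$, we use that $\widehat\Xi$ is time-homogeneous Markov with the kernel $\mathsf J$ from \eqref{eq:impact-transition}. The Chapman--Kolmogorov relation reads
\begin{equation*}
 \Prob_x\{\widehat\Xi(t+s)\in\mathcal D\}
 =\int_{\mathcal X_{\mathcal P}}\Prob_y\{\widehat\Xi(t)\in\mathcal D\}\,
 \Prob_x\{\widehat\Xi(s)\in dy\}.
\end{equation*}
We integrate this against $f(x)\,d\sigma_1(x)$ and apply Fubini, which is justified by the finite propagation localisation above with $f$ replaced by $|f|$. The right-hand side then becomes $\int\Prob_y\{\widehat\Xi(t)\in\mathcal D\}\,(K_sf)(y)\,d\sigma_1(y)$ by the defining relation \eqref{eq:KProb} of $K_s$, extended from indicators to bounded compactly supported Borel functions by linearity and monotone convergence. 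That expression equals $\int_{\mathcal D}K_t(K_sf)\,d\sigma_1$. Finally, $K_0=\mathrm{id}$, since at time $0$ the process sits at $x$. The convention for $\xi=0$ only affects the null set $\{\xi=0\}$.

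The $L^1$ statements follow next. Positivity is immediate from \eqref{eq:KProb}. For mass preservation with $f\ge0$ in $L^1$, we exhaust $\mathcal X_{\mathcal P}$ by bounded sets $\mathcal D_k\uparrow\mathcal X_{\mathcal P}$. By monotone convergence, $\int K_tf\,d\sigma_1=\int\Prob_x\{\widehat\Xi(t)\in\mathcal X_{\mathcal P}\}f\,d\sigma_1=\int f\,d\sigma_1$. Here we use that $\widehat\Xi(t)$ is defined for all $t$ almost surely, because there are only finitely many collisions in bounded windows. For general $f$, split $f=f^+-f^-$ to get $\|K_tf\|_1\le\|K_tf^+\|_1+\|K_tf^-\|_1=\|f\|_1$, which is the contraction property. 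The main obstacle is the measure-theoretic bookkeeping in the first step: proving that $K_tf$ really is absolutely continuous, and not merely a measure, so that $K_t$ maps $L^1_{\rm loc}$ into functions and the composition $K_tK_s$ makes sense. The flux-preserving change of variables together with the bound $\Phi\le1$ is the tool for this. We are not asked to prove strong continuity in $t$, so we would not attempt it here.
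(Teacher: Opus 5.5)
Your route is the paper's: the collision-number decomposition $K_t=\sum_{n\ge0}K_t^{(n)}$ controlled by $0\le\Phi_{0,V}\le1$ and the flux-preserving facewise maps, the Markov property for $K_{t+s}=K_tK_s$, and \eqref{eq:KProb} for positivity, mass preservation and contraction. Your Chapman--Kolmogorov/Fubini computation is a correct unpacking of the paper's ``concatenation of the impact-resolved paths''. The genuine omission is strong continuity, which you explicitly set aside. The paper reads ``contraction semigroup'' in the usual $C_0$ sense. Its proof says so directly: the strong continuity ``required of a contraction semigroup'' is obtained from \cite[Eqs.~(6.32)--(6.34)]{MarklofStrom2011}. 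So as it stands, your proposal shows that $\{K_t\}$ is a semigroup of positive, mass-preserving contractions, but not the full statement.

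The missing step is short with the tools you already have. The zero-collision term is a pure translation, $K_t^{(0)}f(Q,V,\xi,W)=f(Q-tV,V,\xi+t,W)$. Hence $\lVert K_t^{(0)}f-f\rVert_{L^1(\sigma_1)}\to0$ as $t\downarrow0$ by continuity of translations in $L^1$: extend $f$ by zero to $\xi<0$, argue fibrewise in $(V,W)$, and use dominated convergence. The terms with $n\ge1$ receive mass only from initial states with $\xi\le t$. Positivity and mass preservation therefore give $\sum_{n\ge1}\lVert K_t^{(n)}f\rVert_1\le\int_{\{\xi\le t\}}\lvert f\rvert\,d\sigma_1\to0$. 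So $\lVert K_tf-f\rVert_1\to0$, and the semigroup and contraction properties transfer this to every $t\ge0$.

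A smaller correction: for a general initial density the collision-count bound is $(A_{\mathcal P}t)^{n-1}/(n-1)!$, as in the paper, not the $(A_{\mathcal P}t)^n/n!$ of \eqref{eq:factorial-bound}. The latter assumes the first flight is drawn from a density bounded by $1$. Here the first flight time is dictated by $f$: for $f\ge0$ supported in $\{\xi<t\}$ one has $\lVert K_t^{(1)}f\rVert_1\ge(1-A_{\mathcal P}t)\lVert f\rVert_1$. Summability is unaffected.
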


\begin{proof}
As in \cite[Eqs.~(6.23)--(6.27)]{MarklofStrom2011}, decompose
$K_t=\sum_{n\geq0}K_t^{(n)}$ according to the number of collisions before
time $t$.  Here the transition density is \eqref{eq:impact-transition}, and the bound $0\leq\Phi_{0,V}\leq1$ gives
\begin{equation*} \lVert K_t^{(n)}\rVert_{L^1\to L^1}
 \leq\frac{(A_{\mathcal P}t)^{n-1}}{(n-1)!},
 \qquad n\geq1.
\end{equation*}
The series therefore converges in operator norm on $L^1$, locally uniformly
in $t$.  Concatenation of the impact-resolved paths gives
$K_{t+s}=K_tK_s$. Positivity, mass preservation and the contraction property follow from \eqref{eq:KProb}, cf.~the remark after \cite[Eq.~(6.16)]{MarklofStrom2011}. The strong continuity required of a contraction semigroup follows from \cite[Eqs.~(6.32)--(6.34)]{MarklofStrom2011}.
\end{proof}

Let us now turn to the generalised Boltzmann equation, which is
the Kolmogorov forward equation for the Markovian random flight process. It
is the polyhedral impact-variable analogue of
\cite[Eq.~(2.30)]{CagliotiGolse2010} and
\cite[Section~6.3, especially Eq.~(6.37)]{MarklofStrom2011}.

For $V\cdot n_\ell>0$ and $x\in F_\ell$,
put
\begin{equation*} U_\ell(V)=VR_\ell,\qquad
 b_\ell^-(V,x)=-\prperp\bigl(xK(U_\ell(V))\bigr),\qquad
 z_\ell^+(V,x)=\prperp\bigl(xK(V)\bigr).
\end{equation*}
Thus $b_\ell^-(V,x)\in B_{U_\ell(V),\ell}$ and
$z_\ell^+(V,x)\in C_{V,\ell}$.  By
\eqref{eq:flux-in}--\eqref{eq:flux-out}, reflection preserves the facewise
flux measure:
\begin{equation}\label{eq:flux-reflection}
 d\omega(U_\ell(V))\,db_\ell^-(V,x)
 =d\omega(V)\,dz_\ell^+(V,x).
\end{equation}
Here \eqref{eq:flux-reflection} is an equality of pushforward measures: both
sides are the image of
\(
 \mathbf 1_{\{V\cdot n_\ell>0\}}(V\cdot n_\ell)
 \,d\omega(V)\,dA_\ell(x)
\)
under the corresponding coordinate map.

Let $f_0\in L^1(\sigma_1)$ be an initial density and put $f(t)=K_tf_0$.
Suppose that, locally for $t>0$, the weak derivatives $\partial_tf$,
$V\cdot\nabla_Qf$ and $\partial_\xi f$ are integrable and that $f$ has the
characteristic collision value
\begin{equation*}
 f(t,Q,V,0,W)
 :=\lim_{s\to0}f(t,Q-sV,V,s,W)
\end{equation*}
almost everywhere, with the collision integral in
\eqref{eq:kinetic-equation} locally integrable.  Then, for
$W=(\ell',b)\in\mathcal B_V$, $f$ satisfies in the distributional sense for
$t>0$ and $\xi>0$, with initial condition $f(0)=f_0$, 
\begin{equation}
   \bigl(\partial_t+V\cdot\nabla_Q-\partial_\xi\bigr)f(t,Q,V,\xi,W)
      =[\mathcal C_{\mathcal P}f](t,Q,V,\xi,W)
 \label{eq:kinetic-equation}
\end{equation}
with the collision operator
\begin{multline}
[\mathcal C_{\mathcal P}f](t,Q,V,\xi,(\ell',b)) =\sum_{\ell:V\cdot n_\ell>0}\int_{F_\ell}
 f\bigl(t,Q,U_\ell(V),0,(\ell,b_\ell^-(V,x))\bigr) \\ \times \Phi_{0,V}\bigl(\xi,b,z_\ell^+(V,x)\bigr)
 (V\cdot n_\ell)\,dA_\ell(x).
 \label{eq:CP}
\end{multline}

This follows from the analogous calculation in
\cite[Section~6.3]{MarklofStrom2011}.  Transport between collisions gives the
left-hand side.  At $\xi=0$, \eqref{eq:flux-reflection} and the conditional
density $\Phi_{0,V}(\xi,b,z_\ell^+)\,d\xi\,db$ give the right-hand side.  The
new features are the face-label sum, the impact variable, and the flux factor
$V\cdot n_\ell$.  No differentiability of $\Phi_{0,V}$ is required, and no
separate loss term appears because $-\partial_\xi f$ accounts for passage
through $\xi=0$.

For the physical initial density $f_{\rm in}$,
Theorem~\ref{thm:extended-limit} identifies the limiting extended density as
$K_tf_{\rm lift}$.  Its position--velocity marginal is
\begin{equation*} [L_tf_{\rm in}](Q,V)
 =\int_0^\infty\int_{\mathcal B_V}
 [K_tf_{\rm lift}](Q,V,\xi,W)\,m_V(dW)\,d\xi.
\end{equation*}
Note that, as in the case of the periodic Lorentz gas, the family $\{L_t:t\geq0\}$ does not in general form a
semigroup.


\begin{thebibliography}{BKMP11}

\bibitem[BKMP11]{BachurinKhaninMarklofPlakhov2011}
P.~Bachurin, K.~Khanin, J.~Marklof, and A.~Plakhov.
\newblock Perfect retroreflectors and billiard dynamics.
\newblock {\em Journal of Modern Dynamics}, 5(1):33--48, 2011.
\newblock \url{https://doi.org/10.3934/jmd.2011.5.33}.

\bibitem[CG10]{CagliotiGolse2010}
E.~Caglioti and F.~Golse.
\newblock On the {B}oltzmann--{G}rad limit for the two dimensional periodic
  {L}orentz gas.
\newblock {\em Journal of Statistical Physics}, 141(2):264--317, 2010.
\newblock \url{https://doi.org/10.1007/s10955-010-0046-1}.

\bibitem[Dek59]{Dekker1959}
T.~J. Dekker.
\newblock On free products of cyclic rotation groups.
\newblock {\em Canadian Journal of Mathematics}, 11:67--69, 1959.
\newblock \url{https://doi.org/10.4153/CJM-1959-009-7}.

\bibitem[EE11]{EE1911}
P.~Ehrenfest and T.~Ehrenfest.
\newblock Begriffliche Grundlagen der statistischen Auffassung in der
  Mechanik.
\newblock In F.~Klein and C.~M{\"u}ller, editors, {\em Encyklop{\"a}die
  der mathematischen Wissenschaften mit Einschlu{\ss} ihrer Anwendungen,
  Band IV, 2.~Teil, Heft 6}, Art.~32, pp.~3--90. B.~G. Teubner, Leipzig,
  1911.


\bibitem[Gal69]{Gallavotti1969}
G.~Gallavotti.
\newblock Divergences and the approach to equilibrium in the {L}orentz and the
  wind-tree models.
\newblock {\em Physical Review}, 185(1):308--322, 1969.
\newblock \url{https://doi.org/10.1103/PhysRev.185.308}.

\bibitem[Lor05]{Lorentz1905}
H.~A. Lorentz.
\newblock Le mouvement des {\'e}lectrons dans les m{\'e}taux.
\newblock {\em Archives n{\'e}erlandaises des sciences exactes et naturelles},
  series~2, 10:336--371, 1905.

\bibitem[LT20]{LT2020}
C.~Lutsko and B.~T{\'o}th.
\newblock Invariance principle for the random {L}orentz gas---beyond the
  {B}oltzmann--{G}rad limit.
\newblock {\em Communications in Mathematical Physics}, 379(2):589--632, 2020.
\newblock \url{https://doi.org/10.1007/s00220-020-03852-8}.

\bibitem[LT21]{LT2021}
C.~Lutsko and B.~T{\'o}th.
\newblock Invariance principle for the random wind-tree process.
\newblock {\em Annales Henri Poincaré}, 22(10):3357--3389, 2021.
\newblock \url{https://doi.org/10.1007/s00023-021-01106-4}.

  
\bibitem[MS10]{MarklofStrom2010}
J.~Marklof and A.~Str{\"o}mbergsson.
\newblock The distribution of free path lengths in the periodic {L}orentz gas
  and related lattice point problems.
\newblock {\em Annals of Mathematics}, 172(3):1949--2033, 2010.
\newblock \url{https://doi.org/10.4007/annals.2010.172.1949}.

\bibitem[MS11]{MarklofStrom2011}
J.~Marklof and A.~Str{\"o}mbergsson.
\newblock The {B}oltzmann--{G}rad limit of the periodic {L}orentz gas.
\newblock {\em Annals of Mathematics}, 174(1):225--298, 2011.
\newblock \url{https://doi.org/10.4007/annals.2011.174.1.7}.

\bibitem[MS14]{MarklofStrom2014}
J.~Marklof and A.~Str{\"o}mbergsson.
\newblock Power-law distributions for the free path length in {L}orentz gases.
\newblock {\em Journal of Statistical Physics}, 155(6):1072--1086, 2014.
\newblock \url{https://doi.org/10.1007/s10955-014-0935-9}.

\bibitem[MS24]{MarklofStrombergsson2024}
J.~Marklof and A.~Str{\"o}mbergsson.
\newblock Kinetic theory for the low-density {L}orentz gas.
\newblock {\em Memoirs of the American Mathematical Society},
  294(1464):v+136, 2024.
\newblock \url{https://doi.org/10.1090/memo/1464}.

\bibitem[Sha96]{Shah1996}
N.~A. Shah.
\newblock Limit distributions of expanding translates of certain orbits on
  homogeneous spaces.
\newblock {\em Proceedings of the Indian Academy of Sciences---Mathematical
  Sciences}, 106(2):105--125, 1996.
\newblock \url{https://doi.org/10.1007/BF02837164}.

\bibitem[Vor05]{Vorobets2005}
Y.~Vorobets.
\newblock On stability of periodic billiard orbits in polyhedra.
\newblock Unpublished manuscript, available at
  \url{https://people.tamu.edu/~yvorobets/Research/Stable.pdf}, 2005.

\end{thebibliography}
\end{document}